\newcommand {\nc} {\newcommand}
\newcommand {\enm} {\ensuremath}
\def \d{\delta}
\nc {\bdm} {\begin{displaymath}}
\nc {\edm} {\end{displaymath}}
\newtheorem {theorem} {\bf{Theorem}}[section]
\newtheorem {lemma}[theorem] {\bf Lemma}
\newtheorem {proposition}[theorem] {\bf Proposition}
\newtheorem {corollary}[theorem] {\bf Corollary}
\numberwithin {equation}{section}
\newcommand{\Ou}{\enm{\mathcal{O}}}
\nc{\J}{\enm{\mathcal{J} }}
\nc {\Z} {\enm{\mathbb{Z}}}
\nc {\form}[1] {\enm{\mbox{\underline{for}}}_{#1}}
\nc {\prol}[1] {\enm{\mbox{\underline{prol}}_{{#1}^*}}}
\nc {\stk} {\stackrel}
\newcommand{\map}{\rightarrow}
\newcommand{\inj}{\hookrightarrow}
\newcommand{\dualav}[1]{{#1}^{\vee}}
\newcommand{\dualmod}[1]{{#1}^{\vee}}
\newcommand{\Pn}[2] {\ensuremath{ {\mathbb{P}}^{#1}_{#2}}}
\nc{\Quot}[3]{\enm{ {\mathfrak{Quot}_{ {#1}/{#2}/{#3}}}}}
\nc{\Hilb}[2]{\enm{ {\mathfrak{Hilb}_{ {#1}/{#2}}}}}
\newcommand{\mfrak}[1]{\mathfrak{#1}}
\newcommand{\bb}[1]{\mathbb{#1}}
\newcommand{\mcal}[1]{\mathcal{#1}}
\nc {\Coh}[4] {\ensuremath{H^{#1}(\Pn{#2}{},{#3}({#4}))}}
\nc {\Ch}[3] {\enm{H^{#1}(X_t,{#2}_t({#3}))}}
\nc {\Qphi}[4]{\enm{ {\mathfrak{Quot}^{~#4}_{ {#1}/{#2}/{#3}}}}}
\nc {\Gra}[4]{\enm{ {\mathfrak{Grass}_{#2}({#3},{#4})}}}
\nc {\HomA}[2]{\enm{\mathrm{Hom}_A{#1}{#2}}}
\nc {\tr}{\mathrm{tr}}
\nc {\C}[2]{\enm{\left(\begin{array}{l} {#1} \\ {#2} \end{array} \right)}}
\nc {\mat}[4]{\enm{\left(\begin{array}{ll}{#1} & {#2} \\ {#3} & {#4}
\end{array}\right)}}
\def \mb{\mbox}
 \def \Z{{\mathbb Z}}
   \def \h{\hat{\ }}
\def \d{\delta} \def \bZ{{\mathbb Z}}
  \def \bX{{\bf X}} \def \bH{{\bf H}}
\def \hG{\hat{\mathbb{G}}_{\mathrm{a}}}
\def \oPsi{\overline{\Psi}}
\def \R1{R((q))[q']\h}
\newcommand{\Ppp}[1]{\tilde{\Pi}_{\phi}^{#1}}
\newcommand{\up}{\tilde{u}}
\newcommand{\lam}{\lambda}
\DeclareMathOperator{\Spec}{\mathrm{Spec}}
\DeclareMathOperator{\Spf}{\mathrm{Spf}}
\DeclareMathOperator{\Lie}{\mathrm{Lie}}
\DeclareMathOperator{\rk}{\mathrm{rk}}
\newcommand{\Hom}{\mathrm{Hom}}
\newcommand{\Ext}{\mathrm{Ext}}
\newcommand{\bI}{{\bf I}}
\newcommand{\switt}{s_{\mathrm{Witt}}}
\newcommand{\longlabelmap}[1]{{\,\buildrel #1\over\longrightarrow\,}}
\newcommand{\longmap}{{\,\longrightarrow\,}}
\def\longisomap{{\,\buildrel \sim\over\longrightarrow\,}} 
\def\isomap{{\,\buildrel \sim\over\rightarrow\,}} 
\newcommand{\mff}{\mfrak{f}}
\newcommand{\Wp}{\tilde{W}}
\newcommand{\Pp}{\tilde{\Pi}}
\nc{\bx}{\mathbf{x}}
\nc{\by}{\mathbf{y}}
\nc{\bz}{\mathbf{z}}
\nc{\ba}{\mathbf{a}}
\nc{\Fp}{\tilde{F}}
\nc{\Rp}{\tilde{R}}
\nc{\mlow}{m_{\mathrm{l}}}
\nc{\mup}{m_{\mathrm{u}}}
\nc{\ord}{\mb{ord }}
\nc{\bXp}{\bX_{\mathrm{prim}}}
\nc{\bPsi}{\mathbf{\Psi}}
\nc{\mult}{\mathrm{mult}}
\nc{\mbB}{\mathbbm{B}}
\nc{\mfor}[1]{{#1}^{\mathrm{for}}}
\nc{\Hdr}{\bH^1_{\mathrm{dR}}(A)}
\title{ Isocrystals associated to arithmetic jet spaces of abelian schemes}
\author{James Borger and Arnab Saha}
\date{}
\email{james.borger@anu.edu.au, arnabsaha0930@gmail.com}
\address{Australian National University, Max Planck Institute for Mathematics}
\begin{document}
\maketitle

\begin{abstract}
Using Buium's theory of arithmetic differential characters, 
we construct a filtered $F$-isocrystal 
$\bH(A)_K$ associated to an abelian scheme $A$ over a $p$-adically complete discrete 
valuation ring with perfect residue field. 
As a filtered vector space, $\bH(A)_K$ admits a natural map to the usual de Rham cohomology of $A$, but
the Frobenius operator comes from arithmetic differential theory and is not the same as
the usual crystalline one.
When $A$ is an elliptic curve, we show that $\bH(A)_K$ has a natural
integral model $\bH(A)$, which implies an integral refinement of a result of
Buium's on arithmetic differential characters. 
The weak admissibility of $\bH(A)_K$ depends on the invertibility of an arithmetic-differential 
modular parameter. Thus the Fontaine 
functor associates to suitably generic $A$ a local Galois representation of an
apparently new kind.
\end{abstract}

\section{Introduction}
The theory of arithmetic jet spaces developed by Buium draws 
inspiration from the theory of differential algebra over a function field.
In differential algebra, given a scheme
$E$ defined over a function field $K$ with a derivation $\partial$ on it, 
one can define the jet spaces $J^nE$ for all $n \in
\bb{N}$. They form an inverse system of 
schemes satisfying a universal property with respect to derivations lifting 
$\partial$. The ring of global functions
$\Ou(J^nE)$ can be thought of as the ring of $n$-th order differential functions
on $E$. In the case when $E$ is an elliptic curve, there exists a 
differential function $\Theta \in \Ou(J^2E)$ which is a homomorphism of group 
schemes from $J^2E$ to
the additive group $\bb{G}_a$.  Such a $\Theta$ is an example of a differential
character of order $2$ for $E$ and is known as a Manin character.
Explicitly, if $E$ is given by the 
Legendre equation $y^2=x(x-1)(x-t)$ over $K=\bb{C}(t)$ with derivation 
$\partial = \frac{d}{dt}$,  then
	$$ 
	\Theta(x,y,x',y',x'',y'')= \frac{y}{2(x-t)^2} - \frac{d}{dt}
	\left[2t(t-1)\frac{x'}{y} \right] + 2t(t-1)x'\frac{y'}{y^2}. 
	$$
where $x,y, x',y', x'',y''$ are the induced coordinates of the jet space $J^2E$.
The existence of such a $\Theta$ can be viewed as a consequence of the Picard--Fuchs equation.
Using the derivation $\partial$ on $K$, we can lift any $K$-rational point $P \in E(K)$ 
canonically to $J^2E(K)$,
and this defines a homomorphism $\nabla:E(K) \map J^2E(K)$. We emphasize that
$\nabla$ is merely a map on $K$-rational points and does not come from a map of schemes. The 
composition $\Theta \circ \nabla: E(K) \map \bb{G}_a(K)$ is then a group homomorphism,
and the torsion points of $E(K)$ are contained in its kernel
simply because $\bb{G}_a(K)$ is torsion free. Such a character $\Theta$ was used by Manin
to give a proof of the Lang--Mordell conjecture for abelian varieties over
function fields \cite{M}. Later Buium gave a different proof, using other
methods, but still using the Manin map \cite{bui92}.

The theory of arithmetic jet spaces, as developed by Buium, proceeds similarly. Derivations $\partial$ are
replaced by what are known as $\pi$-derivations $\d$. They naturally arise from the theory of $\pi$-typical Witt
vectors. For instance, when our base ring $R$ is an unramified extension of the ring of $p$-adic integers $\Z_p$
and when $\pi= p$, the Fermat quotient operator $\d x = \frac{\phi(x)-x^p}{p}$ is the unique $p$-derivation,
where the endomorphism $\phi\colon R\to R$ is the lift of the $p$-th power Frobenius endomorphism of $R/pR$. 

Let
$A$ be an abelian scheme over $R$ of relative dimension $g$. In analogy with differential algebra, one can define
the $n$-th order jet space $J^nA$ to be the $\pi$-adic formal scheme over $R$ with functor of points
$$(J^nA)(C) = A(W_n(C)) = \Hom_R(\Spec W_n(C),A), $$
where $C$ is an $R$-algebra and $W_n(C)$ is the ring of $\pi$-typical Witt 
vectors of length $n+1$, which is taken as the arithmetic analogue of 
$C[t]/(t^{n+1})$. One might say that $W_n(C)$ is the ring of
arithmetic jets of order $n$. The jet space $J^nA$ is also known as the Greenberg transform. Similar to the
differential jet space, $J^nA$ has relative dimension $g(n+1)$ over the base, 
in this case $\Spf R$.

Then we let $\bX_n(A)$ denote the $R$-module of all homomorphisms $J^nA\to\hG$ (of $\pi$-adic formal group
schemes), which are referred to as arithmetic differential characters of $A$ of order $n$. They are the arithmetic
analogue of Manin characters. Let $\bX_\infty(A)$ be the direct limit of the $\bX_n(A)$. The usual Frobenius
operator on Witt vectors induces a canonical Frobenius morphism $\phi:J^{n+1}A \map J^nA$ lying over the
endomorphism $\phi$ of $\Spf R$. Hence pulling back morphisms via $\phi$ as $\Theta \mapsto \phi^*\Theta$, endows
$\bX_\infty(A)$ with an action of $\phi^*$ and hence makes $\bX_\infty(A)$ into a left module over the twisted
polynomial ring $R\{\phi^*\}$ with commutation law $\phi^*\cdot r=\phi(r)\cdot \phi^*$.

In \cite{bui95}, Buium studied the structure of $\bX_\infty(A)_K:= \bX_\infty(A)\otimes_R K$ as a
$K\{\phi^*\}$-module, where $K=R[\frac{1}{p}]$. For example, he showed that when $A$ has ordinary reduction and
its Serre--Tate parameters are either trivial or sufficiently general, $\bX(A)_K$ is freely generated as a
$K\{\phi^*\}$-module by $g$ characters of order either $1$ or $2$. In the case of elliptic curves, he showed
this without any restrictions on the Serre--Tate parameter. 

The main purpose of this paper is not to study the abstract structure of the $\bX_{n}(A)$ but to use them and
other character groups to construct a new filtered $F$-isocrystal associated to $A$. But we will obtain some
integral, $R$-linear refinements of Buium's theorems along the way and as applications at the end.

\vspace{3mm}

To go into greater detail, we need to introduce some further notation.
Let $R$ be a $p$-adic complete
discrete valuation ring with perfect residue field and uniformizer $\pi$.
We always assume $p\mid\pi^{p-2}$, which is to say the absolute ramification index is at most $p-2$. 
Let $\phi:R \map R$ be a
fixed lift of the $p$-th power Frobenius: so $\phi(x) \equiv x^q \mod \pi R$, for all $x \in R$.
(In the body of the paper, we allow $q$-power Frobenius lifts for any power $q$ of $p$, which
requires modifying the Witt vector functors.)
Let us also write $M_K:= M \otimes_R K$ for any $R$-module $M$.


Let $u:J^nA \map A$ be the usual projection map and put $N^n =\ker{u}$. 
Then since $A$ is smooth,
$N^n$ is isomorphic to the ($\pi$-adic formal) affine $ng$-space $\hat{\mathbb{A}}^{ng}_R$.
As a group object, it is unipotent and commutative. 
Then in section~\ref{sec-lat-frob}, we construct morphisms
	$$
	\mfrak{f}:N^{n+1} \map N^n
	$$ 
reducing modulo $\pi$ to the Frobenius map composed
with the usual projection $u:N^{n+1} \map N^n$. We call $\mfrak{f}$ the
{\it lateral Frobenius}. This has a transparent description when $A$ is $\hG$. 
Then
$J^n\hG(C)$ is $W_n(C)$, and $N^n$ is the Veschiebung ideal $VW_n(C)$, and
$\mfrak{f}$ is simply the morphism $V(x)\mapsto VF(x)$. 

As befits its name,
$\mfrak{f}$ is not compatible with the usual
Frobenius $\phi:J^{n+1}A \map J^n A$ under  the inclusions $i:N^n\to J^n A$. 
That is, we generally have
	$$
	\phi \circ i \neq i \circ \mfrak{f}.
	$$
In fact, we can not expect
to have equality here because that would induce lift of Frobenius on 
the quotient $A$, and that exists only when $A$ is a canonical lift. But we do have the equality
	$$
	\phi^2 \circ i = \phi \circ i \circ \mfrak{f}.
	$$ 
We will see that this odd-looking behavior is just a reflection of the familiar
facts $FV\neq VF$ and $FFV=FVF$ in Witt vectors of rings which are $p$-torsion free.

In section 5, we determine the effect of the lateral Frobenius on the character group 
$\Hom(N^\infty,\hG)=\varinjlim_n\Hom(N^n,\hG)$ and show that it is freely generated as an
$R\{\mfrak{f}^*\}$-module by $\Hom(N^1,\hG)$ in the sense that the natural map is an isomorphism:
	$$
	R\{\mfrak{f}^*\}\otimes_R \Hom(N^1,\hG) \longisomap \Hom(N^\infty,\hG).
	$$

In section~\ref{Fiso}, we define $\bH(A)_K$, our eventual isocrystal. We first define
	$$
	\bH(A) 	:= \frac{\Hom(N^\infty,\hG)}{i^*\phi^*(\bX_{\infty}(A)_{\phi})}
	= \varinjlim_n\frac{\Hom(N^n,\hG)}{i^*\phi^*(\bX_{n-1}(A)_{\phi})}
	$$
and show the semi-linear endomorphism $\mfrak{f}^*$ on $\Hom(N^\infty,\hG)$ descends to
$\bH(A)$. It also comes with a submodule
	$$
	\bXp(A):= \varinjlim_n \bX_n(A)/\phi^*(\bX_{n-1}(A)_{\phi})
	$$	
and fits in a map of short exact sequences 
\begin{equation}
	\label{diag-crys-limit-intro}
	\xymatrix{
	0 \ar[r] & \bXp(A)_K \ar[d]_\Upsilon \ar[r] & 
	\bH(A)_K \ar[d]_\Phi \ar[r] &\bI(A)_K \ar@{^{(}->}[d] \ar[r] &  0 \\
	0 \ar[r] & H^0(A,\Omega_A) \ar[r] & \Hdr \ar[r] & H^1(A,\Ou_A) \ar[r] & 0 
	}
\end{equation}

At this point we have not yet shown that $\bH(A)_K$ is finite dimensional.
This is done in section \ref{exactsequence}, where the first step is to prove the following:
\begin{theorem}
\label{maxorder-intro}
For any abelian scheme $A$ of dimension $g$, the character group $\bX_\infty(A)_K$ is freely
generated as a $K\{\phi^*\}$-module by $g$ differential characters of order at most $g+1$.
\end{theorem}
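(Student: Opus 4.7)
The plan is to embed $\bX_\infty(A)_K$ into the free $K\{\mfrak{f}^*\}$-module $\Hom(N^\infty,\hG)_K$ of rank $g$ and then exploit the structure theory of submodules of free modules over twisted polynomial rings.

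First, pullback along $i\colon N^n \hookrightarrow J^nA$ gives restriction maps $i^*\colon \bX_n(A) \to \Hom(N^n,\hG)$. Since $A$ is an abelian scheme, $\Hom(A,\hG) = 0$, and the exact sequence $0 \to N^n \to J^nA \to A \to 0$ then shows every $i^*$ is injective. The Frobenius pullback $\phi^*\colon \bX_n(A) \to \bX_{n+1}(A)$ is also injective, since $\phi\colon J^{n+1}A \to J^nA$ is faithfully flat. Hence
$$\iota := i^* \circ \phi^*\colon \bX_\infty(A)_K \hookrightarrow \Hom(N^\infty,\hG)_K$$
is injective. The identity $\phi^2 \circ i = \phi \circ i \circ \mfrak{f}$ from section~\ref{sec-lat-frob} translates, on taking pullbacks, to $\iota(\phi^*\Theta) = \mfrak{f}^*(\iota(\Theta))$, so $\iota$ is equivariant after identifying the twisted polynomial rings $K\{\phi^*\}$ and $K\{\mfrak{f}^*\}$ in the evident way.

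By the section~5 result recalled in the introduction, $\Hom(N^\infty,\hG)_K$ is a free left $K\{\mfrak{f}^*\}$-module of rank $g$. As a twisted polynomial ring in one variable over a field with injective endomorphism $\phi$, $K\{\mfrak{f}^*\}$ is a left principal ideal domain, so every finitely generated submodule of a free module of rank $g$ is itself free of rank at most $g$. Each $\bX_n(A)_K$ is finite-dimensional over $K$ because $i^*$ embeds it into the finite-dimensional space $\Hom(N^n,\hG)_K$, so $\iota(\bX_\infty(A)_K)$ is finitely generated as a $K\{\mfrak{f}^*\}$-module, and hence free of rank $r \le g$. Transporting back, $\bX_\infty(A)_K$ is free of rank $r \le g$ over $K\{\phi^*\}$.

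The remaining step, which is the main obstacle, is to prove $r = g$ with free generators of order at most $g+1$. The expected approach is to construct $g$ independent characters via an arithmetic analogue of Manin's construction: to each translation-invariant $1$-form $\omega \in H^0(A, \Omega_A)$, a suitable polynomial expression in $\phi^*$ designed to cancel successive obstructions should yield a differential character whose order is controlled by the length of the canonical filtration of $N^\infty$ whose graded pieces are Frobenius twists of $\Lie(A)$. That filtration has exactly $g$ nontrivial steps, and the extra factor of $\phi^*$ absorbed into the definition of $\iota$ accounts for the ``$+1$'' in $g+1$. Linear independence of the resulting $g$ characters can then be extracted from their leading terms in the associated graded of $\Hom(N^\infty,\hG)_K$; combined with the bound $r \le g$ from the previous paragraph, this yields the theorem.
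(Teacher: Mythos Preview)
Your first two paragraphs, establishing that $\bX_\infty(A)_K$ is free of rank $r\le g$ over $K\{\phi^*\}$, are essentially correct and give a pleasant alternative to the paper's route. Two small remarks: the injectivity of $\phi^*$ does not really come from faithful flatness of $\phi$ (whose derivative is $\pi$ times the identity), but it does follow from proposition~\ref{pro-filt}; and your sentence ``each $\bX_n(A)_K$ is finite-dimensional \dots\ so $\iota(\bX_\infty(A)_K)$ is finitely generated'' is a non sequitur --- the correct reason is simply that $K\{\mfrak{f}^*\}$ is left Noetherian (indeed a left PID via the left division algorithm), so any submodule of a free module of rank $g$ is finitely generated and free of rank at most $g$.

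The genuine gap is the final paragraph. You acknowledge that showing $r=g$ with generators of order $\le g+1$ is ``the main obstacle,'' and then you do not actually overcome it. The sketch you give is both vague and wrong in places: there is no ``canonical filtration of $N^\infty$'' with ``exactly $g$ nontrivial steps'' --- the filtration of $\Hom(N^\infty,\hG)$ by order has \emph{infinitely} many steps, each of $K$-dimension $g$; and the alluded-to Manin-type construction is never carried out, so no bound on the orders emerges.

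The paper's argument for this half is short, elementary, and quite different from what you propose. It exploits the exact sequence
\[
0 \longrightarrow \bX_n(A)_K \longrightarrow \Hom(N^n,\hG)_K \longrightarrow \bI_n(A)_K \longrightarrow 0
\]
together with the inclusion $\bI_n(A)_K \subseteq \Ext(A,\hG)_K \cong K^g$. Setting $h_n = \rk\bI_n(A)_K - \rk\bI_{n-1}(A)_K$ and $l_n = \dim_K\bigl(\bX_n(A)_K/(u^*\bX_{n-1}(A)_K + \phi^*\bX_{n-1}(A)_K)\bigr)$, one computes $l_n = h_{n-1}-h_n$ for $n\ge 2$; hence the $h_n$ are a weakly decreasing sequence of non-negative integers summing to at most $g$. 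This forces $h_n=0$ for $n\ge g+1$, so $\mup\le g+1$, and then $l_1+\cdots+l_{\mup} = l_1 + h_1 = \rk\Hom(N^1,\hG)_K = g$. The point you are missing is that the finiteness of $\Ext(A,\hG)_K$, not any filtration on $N^\infty$, is what bounds the orders.
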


We note that Buium's theorem A in~\cite{bui95} contains the same result but with no bound on 
the orders of the characters. He did however establish exactly the bound $g+1$ for the analogous question
in the setting of differential algebra. See theorem $1.1$ of chapter 5 of his book~\cite{buiHbook}.
(We would like to remark that our 
techniques work in the case of differential algebra 
as well to give an alternate proof.) We emphasize that this bound is not sharp for generic $A$.
Buium's theorem B in~\cite{bui95} says that for abelian varieties with
ordinary reduction and generic Serre--Tate parameters, $\bX_\infty(A)_K$ is freely generated by characters $g$ of order at most $2$. But it would be interesting to determine whether it is the best bound that applies for all $A$.

Our main result then follows:
\begin{theorem}
\label{isocrys-intro}
The rank of $\bH(A)_K$ is at most $2g$, and hence $\bH(A)_K$ is a filtered isocrystal.
The underlying filtered vector space
has a natural map (\ref{diag-crys-limit-intro}) to de Rham cohomology.
\end{theorem}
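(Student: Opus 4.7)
The plan is to extract the rank bound directly from the short exact sequence (\ref{diag-crys-limit-intro}), after which the filtered $F$-isocrystal structure essentially assembles itself from objects already constructed. So I would aim to show that both $\bXp(A)_K$ and $\bI(A)_K$ have $K$-dimension at most $g$, and then use additivity in the exact sequence to conclude.

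For the left-hand term, I would invoke Theorem~\ref{maxorder-intro}. Since $\bX_{\infty}(A)_K$ is free as a left $K\{\phi^*\}$-module on $g$ generators, the quotient $\bX_{\infty}(A)_K / \phi^*\bigl((\bX_{\infty}(A)_K)_{\phi}\bigr)$ is $g$-dimensional over $K$. The formation of this quotient should be compatible with the directed colimit defining $\bXp(A)_K$, giving $\dim_K \bXp(A)_K \leq g$. One delicate point I need to verify is that the bound on the orders of generators (at most $g+1$, again from Theorem~\ref{maxorder-intro}) ensures the colimit stabilizes and that no extra dimensions creep in from the filtered pieces $\bX_n(A)/\phi^*(\bX_{n-1}(A)_{\phi})$ for small $n$.

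For the right-hand term, the diagram already exhibits an injection $\bI(A)_K \hookrightarrow H^1(A,\mathcal{O}_A)$, and the latter has $K$-dimension $g$ by standard theory of abelian schemes, so $\dim_K \bI(A)_K \leq g$. Putting the two estimates together via the top row of (\ref{diag-crys-limit-intro}) gives $\dim_K \bH(A)_K \leq 2g$.

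With finiteness in hand, the isocrystal structure is immediate. The semi-linear operator $\mathfrak{f}^*$ on $\Hom(N^\infty,\hG)$ was already shown in Section~\ref{sec-lat-frob} to descend to $\bH(A)$; after inverting $\pi$, it yields the $\phi$-semilinear endomorphism $\Phi$ on the finite-dimensional $K$-vector space $\bH(A)_K$, making it an $F$-isocrystal. The filtration is inherited by declaring $\textrm{Fil}^1\bH(A)_K = \bXp(A)_K$, compatibly with Hodge filtration via $\Upsilon$; and the natural map to de Rham cohomology is precisely the middle vertical arrow $\Phi$ of the diagram, whose existence was established when constructing the diagram itself. The main obstacle I expect is the careful dimension count for $\bXp(A)_K$, where one must control how passing to the primitive quotient interacts with the filtration by order and with the twist $(\,\cdot\,)_\phi$; modulo that, the theorem follows by a clean diagram chase.
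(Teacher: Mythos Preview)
Your proposal is correct and follows essentially the same route as the paper: bound $\dim_K \bXp(A)_K$ by $g$ using the $K\{\phi^*\}$-freeness from Theorem~\ref{maxorder-intro}, bound $\dim_K \bI(A)_K$ by $g$ via the inclusion into $H^1(A,\Ou_A)\cong \Ext(A,\hG)$, and add in the exact sequence. The paper handles your ``delicate point'' about the colimit by proving explicitly (Corollary~\ref{xprim}) that $\bXp(A)_K \simeq \bX_{\mup}(A)_K/\phi^*(\bX_{\mup-1}(A)_K)_{\phi}$ with a primitive basis of size exactly $g$; your direct argument that filtered colimits commute with quotients, giving $\bXp(A)_K \cong \bX_\infty(A)_K/\phi^*(\bX_\infty(A)_K)_{\phi}$, is equally valid and yields the same bound. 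One minor correction: the descent of $\mfrak{f}^*$ to $\bH(A)$ is carried out in Section~\ref{Fiso} (see~(\ref{latfrobH})), not Section~\ref{sec-lat-frob}.
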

See theorem~\ref{isocrys} for a more detailed statement.

Next, let us define the {\it lower splitting number} $\mlow$ to be the minimal number which is the order of a
nonzero character: so $\bX_{\mlow}(A) \ne \{0\}$ but $\bX_{\mlow-1}(A) = \{0\}$. We will see that $\mlow$ is in
fact either $1$ or $2$.
If $A$ is an elliptic curve, then $\mlow$ is $1$ when $A$ is the canonical lift of an ordinary elliptic
curve and is $2$ otherwise. In either case, $\bX_{\mlow}(A)$ is a free $R$-module of rank $1$.
We then show in theorem~\ref{phigen-body}
that $\bH(A)$ is finitely generated as an $R$-module. (We do not consider whether this is true for general abelian varieties.) Using this, we prove the following theorem, which is an integral refinement
of a part of Buium's theorem $\text{B}'$ in~\cite{bui95}:

\begin{theorem}
\label{phigen-intro}
Let $A$ be an elliptic curve with lower splitting number $\mlow$. 
Then the $R$-module $\bX_{\mlow}(A)$, which is free of 
rank $1$, freely generates $\bX_\infty(A)$ as an $R\{\phi^*\}$-module
in the sense that the canonical map 
$$R\{\phi^*\}\otimes_R \bX_{\mlow}(A) \map \bX_\infty(A)$$ 
is an isomorphism.
\end{theorem}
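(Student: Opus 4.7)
The goal is to prove that the canonical map $R\{\phi^*\} \otimes_R \bX_\mlow(A) \to \bX_\infty(A)$ is an isomorphism. Let $\Theta_0$ generate $\bX_\mlow(A)\cong R$.

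Injectivity is formal: $\bX_\infty(A)$ is $R$-torsion-free, and Theorem~\ref{maxorder-intro} specialized to $g = 1$ gives that $\bX_\infty(A)_K = K\{\phi^*\}\Theta_0$ is free of rank one as a $K\{\phi^*\}$-module. Thus $\{\phi^{*k}\Theta_0\}_{k\geq 0}$ is $K$-linearly independent, and no nontrivial $R$-linear relation can hold.

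For surjectivity we proceed by induction on $n \geq \mlow$, showing $\bX_n(A) = \bigoplus_{k=0}^{n-\mlow} R \cdot \phi^{*k}\Theta_0$. The base case is the hypothesis. For the inductive step we invoke three structures from earlier in the paper: (i) the injection $i^* \colon \bX_n(A) \hookrightarrow \Hom(N^n, \hG)$, coming from $\Hom(A,\hG) = 0$; (ii) the identification in Section~5 of $\Hom(N^\infty, \hG)$ with the free $R\{\mfrak{f}^*\}$-module on a generator $\eta$ of $\Hom(N^1, \hG) \cong R$, which provides the $R$-basis $\{\mfrak{f}^{*j}\eta\}_{j \geq 0}$; and (iii) the intertwining $i^*\phi^{*k} = \mfrak{f}^{*(k-1)}\,i^*\phi^*$ for $k \geq 1$, obtained by iterating the identity $\phi^2 \circ i = \phi \circ i \circ \mfrak{f}$ of Section~\ref{sec-lat-frob}. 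Writing $u := i^*\phi^*\Theta_0 = \sum_{j=0}^{\mlow} c_j\,\mfrak{f}^{*j}\eta$ and expanding any $\Psi \in \bX_n(A)$ as $\Psi = \sum_{k=0}^{n-\mlow} a_k\,\phi^{*k}\Theta_0$ with $a_k \in K$ (by Theorem~\ref{maxorder-intro}), a short computation shows the coefficient of the top basis vector $\mfrak{f}^{*(n-1)}\eta$ in $i^*\Psi$ equals $a_{n-\mlow}\,\phi^{n-\mlow-1}(c_\mlow)$, and this coefficient must lie in $R$.

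Everything thus reduces to showing $c_\mlow \in R^\times$. Given this, $a_{n-\mlow} \in R$, and then $\Psi - a_{n-\mlow}\phi^{*(n-\mlow)}\Theta_0 \in \bX_n(A)$ has vanishing coefficient on $\mfrak{f}^{*(n-1)}\eta$; its restriction to $N^n$ therefore factors through $N^{n-1}$, which forces the character to factor through $J^{n-1}A$ and so lie in $\bX_{n-1}(A)$. The inductive hypothesis then completes the argument. The unit-ness of $c_\mlow$ is the main obstacle, and we derive it from Theorem~\ref{phigen-body}, the finite generation of $\bH(A) = \Hom(N^\infty, \hG)/I$ as an $R$-module, where $I := i^*\phi^*\bX_\infty(A)_\phi$ is a left $R\{\mfrak{f}^*\}$-submodule of $\Hom(N^\infty, \hG)$ (since $\mfrak{f}^* \circ i^*\phi^* = i^*\phi^* \circ \phi^*$) with $I \otimes_R K = K\{\mfrak{f}^*\}u$ of codimension $\mlow$. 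A careful analysis of the intermediate cokernel $\Hom(N^\infty, \hG)/R\{\mfrak{f}^*\}u$, combined with the finite generation constraint on $\bH(A)$, pins down $c_\mlow$ as a unit: any failure of unit-ness would create infinite $R$-torsion in $\mfrak{f}^*$-degrees $\geq \mlow$ that the actual image $I$ cannot absorb, because doing so would demand characters $\Psi \in \bX_\infty(A)$ of the form $\pi^{-s}\phi^{*k}\Theta_0$, in conflict with the integral structure of the jet tower and hence with the finiteness of $\bH(A)$.
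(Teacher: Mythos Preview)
Your argument has a circularity at the crucial step. You derive $c_{\mlow}\in R^\times$ by invoking the finite generation of $\bH(A)$ over $R$, citing Theorem~\ref{phigen-body}. But in the paper, Theorem~\ref{phigen-body} is precisely the result containing Theorem~\ref{phigen-intro}: its part~(2) is the free-generation statement you are proving, and part~(3), the finite generation of $\bH(A)$, is deduced \emph{from} parts~(1)--(2). So the citation is circular. (The introduction's phrasing ``Using this, we prove the following theorem'' is misleading about the logical order; the actual dependence in Section~8 runs the other way.) Moreover, even granting finite generation of $\bH(A)$ as a black box, your derivation of $c_{\mlow}\in R^\times$ is not a proof: ``a careful analysis of the intermediate cokernel'' and the assertion that $I$ ``cannot absorb'' certain torsion are not arguments. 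Whether elements of the form $\pi^{-s}\phi^{*k}\Theta_0$ lie in $\bX_\infty(A)$ is exactly what is at stake; you cannot rule them out by appealing to ``the integral structure of the jet tower'' without saying what that structure is, and determining it is the content of the theorem.

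The paper handles this key step differently, in Theorem~\ref{intlam}. Rather than using finite generation of $\bH(A)$ itself, it exploits the comparison map $\Phi:\bH(A)\to\Ext^\sharp(A,\hG)$. When $\gamma\neq 0$, this map becomes injective after inverting $\pi$, so the image of $\bH(A)$ in $\bH(A)_K$ coincides with its image in the \emph{a priori} finitely generated $R$-module $\Ext^\sharp(A,\hG)$, yielding an $\mfrak{f}^*$-stable lattice to which the Dieudonn\'e--Manin style Lemma~\ref{lem:char-poly} applies, giving $\phi(\lambda)\in R$ and hence $\lambda\in R$. When $\gamma=0$ one argues instead with the lattice $\bI(A)\subset\Ext(A,\hG)$. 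The conclusion $\lambda\in R$ is equivalent to your $c_{\mlow}\in R^\times$. Once this is in hand, the remainder of your argument (injectivity via Theorem~\ref{maxorder-intro}, and the inductive surjectivity by peeling off the top $\mfrak{f}^*$-coefficient) is correct and essentially matches Corollary~\ref{cor:square-iso} together with Theorem~\ref{phigen-body}(1)--(2).
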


In fact, given \'{e}tale coordinates for $A$ at the origin, we construct a 
canonical basis element $\Theta_{\mlow} \in \bX_{\mlow}(A)$. See theorem~\ref{phigen-body} for the proof.

We next consider the finer structure of the isocrystal $\bH(A)_K$, still assuming $A$ is an elliptic curve.
Theorem \ref{elliptic-crystal} describes it explicitly in terms of certain arithmetic-differential modular
parameters $\lambda$ and $\gamma$. We emphasize that these modular parameters are not modular functions in the
usual sense, which is to say functions on the usual moduli space or on bundles over it, but are instead functions
on their arithmetic jet spaces. We show that the parameter $\gamma$ is always divisible by $\pi$ and then
observe that whenever $\gamma/\pi \not\equiv 0 \bmod \pi$, the $F$-crystal $\bH(A)$ is weakly admissible. Using
the Fontaine functor~\cite{fon79} \cite{rap-zink}, one then obtains apparently new crystalline Galois
representations attached to elliptic curves and presumably abelian varieties, or at least to generic ones.

The crystalline theory also attaches a filtered 
$F$-isocrystal $\bH_{\mathrm{crys}}(A)_K$ to $A$. However, our $F$-isocrystal 
$\bH(A)_K$
is different than the crystalline one. This is because the Frobenius 
map on $\bH(A)_K$ depends on the higher $\pi$-derivatives of the coefficients 
of the equations defining the abelian scheme, whereas 
$\bH_{\mathrm{crys}}(A)_K$ does not involve any such higher $\pi$-derivatives.
A natural question is whether the two determine each other, especially by some explicit
linear-algebraic functor like the Fontaine functor mentioned above. 
In the analogous Drinfeld module setting of~\cite{Drin-us},
the shtuka necessarily determines both, simply because it determines the Drinfeld module.
But it would be interesting to go further and describe the functor in purely linear-algebraic terms, without
a detour back through the Drinfeld module.
Even further, one could ask whether the isocrystal is determined by just the local shtuka.
If so, then it would be natural to 
hope that one could do the same in the mixed-characteristic context of this paper
using the $p$-adic shtukas of Scholze and collaborators~\cite{Scholze-icm}.

{\bf Acknowledgement.} We wish to thank the anonymous referee for carefully
reading our
article and the suggestions which led to deeper clarifications and 
enrichment of this paper.

The second author is also grateful to Max Planck Institute for Mathematics in 
Bonn for its hospitality and financial support.

\section{Notation}
\label{notation}
We collect here the notation that will remain fixed throughout the paper.
\begin{align*}
	p &= \text{a prime number} \\
	q &= \text{a power $>1$ of }p \\
	R &= \text{a $p$-adically complete discrete valuation ring} \\
	K &= \text{the fraction field of $R$} \\
	M_K &= K\otimes_R M, \text{ for any $R$-module $M$} \\
	\mfrak{m} &= \text{the maximal ideal of }R\\
	\pi &= \text{a generator of }\mfrak{m} \\
	v &= \text{the valuation on $R$ normalized such that }v(\pi)=1\\
	e &= \text{the absolute ramification index $v(p)$, assumed $\leq p-2$} \\
	k &= \text{the residue field of $R$, assumed to be perfect} \\
	\phi &= \text{an endomorphism of $R$ satisfying $\phi(x) \equiv x^q \bmod \mfrak{m}$, for all $x \in R$} \\
	S &= \Spf (R)
\end{align*}
(The assumption $e \leq p-2$ is only used in the citations to~\cite{bui95} in section~\ref{section:kernel}
and might well be removable.)
By a \emph{$\pi$-formal scheme}, we will mean a $\pi$-adic formal scheme over $S$.

For any $R$-module $M$
if $x_1,\cdots, x_n \in M$ forms an $R$-basis, then we will denote $M =
R\langle x_1,\cdots, x_n\rangle$. If $Y=(y_1,\dots, y_l) \in M^l$ and 
$\Lambda= (\alpha_1,\dots, \alpha_l) \in R^l$, then the dot-product 
$\Lambda.Y$ will denote
$$\Lambda. Y = \lam_1 y_1 + \cdots + \lam_l y_l.$$


\section{Witt Vectors}
Witt vectors over Dedekind domains with finite residue fields were introduced in \cite{bor11a}. We will give a brief over view in this section.

\subsection{Frobenius lifts and $\pi$-derivations}
Let $B$ be an $R$-algebra, and let $C$ be a $B$-algebra with structure map $u:B \map C$.
In this paper, a ring homomorphism $\psi:B\map C$ will be called a
{\it lift of Frobenius} (relative to $u$) if it satisfies the following:
\begin{enumerate}
\item The reduction mod $\pi$ of $\psi$ is the $q$-power Frobenius relative to $u$, that is,
$\psi(x) \equiv u(x)^q \bmod \pi C$.

\item The restriction of $\psi$ to $R$ coincides with the fixed $\phi$ on $R$,
that is, the following diagram commutes
        $$\xymatrix{
        B \ar[r]^\psi & C \\
        R \ar[r]_{\phi} \ar[u] & R \ar[u] 
        }$$
\end{enumerate}
A \emph{$\pi$-derivation} $\d$ from $B$ to $C$ means a set-theoretic map
$\d:B \map C$ satisfying the following for all $x,y \in B$
        \begin{eqnarray*}
        \label{der}
        \d(x+y) &=& \d (x) + \d (y) + C_\pi(u(x),u(y)) \\
        \d(xy) &=& u(x)^q \d (y) +  \d (x) u(y)^q + \pi \d (x) \d (y),
        \end{eqnarray*}
where $C_\pi(X,Y)$ denotes the polynomial
	$$
	C_\pi(X,Y) = \frac{X^q + Y^q - (X+Y)^q}{\pi} \in R[X,Y],
	$$
such that for all $r\in R$, we have
$$
\d(r) = \frac{\phi(r)-r^q}{\pi}.
$$
When $C=B$ and $u$ is the identity map, we will call this simply a $\pi$-derivation on $B$.

It follows that the map $\phi: B \map C$ defined as
        $$
        \phi(x) := u(x)^q + \pi \d (x)
        $$
is a lift of Frobenius in the sense above. Conversely,
for any flat $R$-algebra $B$ with a lift of Frobenius $\phi$, one can define
the $\pi$-derivation $\d(x)= \frac{\phi(x)-x^q}{\pi}$ for all $x \in B$.

It is worth pointing out that although the definition of $\pi$-derivation depends on the choice of uniformizer $\pi$ in a literal sense, it is independent of the choice up to a canonical bijection. Indeed,
if $\pi'$ is another uniformizer, then $\d(x)\pi/\pi'$ is a $\pi'$-derivation, and so this correspondence
induces a bijection between $\pi$-derivations $B\to C$ and $\pi'$-derivations $B\to C$. Further,
if $\pi''$ is a third uniformizer, then we have $(\d(x)\pi/\pi') \pi'/\pi''=\d(x)\pi/\pi''$. In this way,
the set of $\pi$-derivations $\d:B\to C$ is independent of $\pi$, up to a canonical and coherent
family of bijections.

\subsection{Witt vectors}
We will present three different points of view on $\pi$-typical Witt vectors.
Let $B$ be an $R$-algebra with structure map $u:R\to B$.

(1) The ring $W(B)$ of \emph{$\pi$-typical Witt vectors} can be defined as
the unique (up to unique isomorphism) $R$-algebra $W(B)$ with a $\pi$-derivation $\d$ on
$W(B)$ and an $R$-algebra homomorphism $W(B) \map B$ such that, given any
$R$-algebra $C$ with a $\pi$-derivation $\d$ on it and an $R$-algebra map
$f:C \map B$, there exists a unique $R$-algebra homomorphism $g:C \map W(B)$  such that the diagram
        $$
        \xymatrix{
        W(B) \ar[d] & \\
        B & C \ar[l]_f \ar[ul]_g 
        }
        $$
commutes and $g \circ \d = \d \circ g$.
Thus $W$ is the right adjoint of the forgetful functor from $R$-algebras with $\pi$-derivation
to $R$-algebras. For details, see section 1 of~\cite{bor11a} and \cite{joyal}.

(2) If we restrict to flat $R$-algebras $B$, then we can ignore the concept of $\pi$-derivation
and define $W(B)$ simply by expressing the universal property above
in terms of Frobenius lifts, as follows:
Given a flat $R$-algebra $B$, the ring $W(B)$ is
the unique (up to unique isomorphism) flat $R$-algebra $W(B)$ with a lift of Frobenius (in the sense
above) $F:W(B) \map W(B)$ and an $R$-algebra homomorphism $W(B) \map B$ such that
for any flat $R$-algebra $C$ with a lift of Frobenius $\phi$ (compatible with the one on $R$)
on it and an $R$-algebra map $f:C \map B$,
there exists a unique $R$-algebra homomorphism $g:C \map W(B)$ such that the diagram
        $$
        \xymatrix{
        W(B) \ar[d] & \\
        B & C \ar[l]_f \ar[ul]_g 
        }
        $$
commutes and $g \circ \phi = F \circ g$.

(3) Finally, one can also define Witt vectors in terms of the Witt polynomials.
For each $n \geq 0$, let us define $B^{\phi^n}$ to be the $R$-algebra
with structure map $R \stk{\phi^n} {\map} R \stk{u}{\map} B$ and define the \emph{ghost rings}
to be the product $R$-algebras
$\Pi^n_{\phi} B = B \times B^{\phi} \times \cdots  \times B^{\phi^n}$ and
$\Pi_{\phi}^\infty B= B \times B^{\phi} \times \cdots$.
Then for all $n \geq 1$ there exists a \emph{restriction}, or \emph{truncation},
map $T_w:\Pi_{\phi}^nB \map \Pi_{\phi}^{n-1}B$ given by $T_w(w_0,\cdots,w_n)= (w_0,\cdots,w_{n-1})$.
We also have the left shift \emph{Frobenius} operators $F_w:\Pi_{\phi}^n B \map \Pi_{\phi}^{n-1} B$ given by
$F_w(w_0,\dots,w_n) = (w_1,\dots,w_n)$. Note that $T_w$ is an $R$-algebra morphism, but
$F_w$ lies over the Frobenius endomorphism $\phi$ of $R$.

Now as sets define
        \begin{equation}
        \label{eq-witt-coord}
        W_n(B)=B^{n+1}, 
        \end{equation}
and define the set map $w:W_n(B) \map \Pi_{\phi}^n B$  by $w(x_0,\dots,x_n)= (w_0,\dots,w_n)$ where
        \begin{equation}
        \label{eq-witt-poly}
        w_i = x_0^{q^i}+ \pi x_1^{q^{i-1}}+ \cdots + \pi^i x_i
        \end{equation}
are the \emph{Witt polynomials}.
The map $w$ is known as the {\it ghost} map. (Do note that under the traditional indexing our $W_n$ would be
denoted $W_{n+1}$.) We can then define the ring $W_n(B)$, the ring
of truncated $\pi$-typical Witt vectors, by the following theorem as for example in 
\cite{hessl05}, page 141:

\begin{theorem}
\label{wittdef}
For each $n \geq 0$, there exists a unique functorial $R$-algebra structure on $W_n(B)$ such that
$w$ becomes a natural transformation of functors of $R$-algebras.
\end{theorem}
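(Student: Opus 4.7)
The plan is to follow the classical Witt construction, adapted to the $\pi$-typical setting over a discrete valuation ring. First, for uniqueness, I observe that when $B$ is $\pi$-torsion free (for instance, a polynomial ring over $R$), the ghost map $w$ is injective: the triangular formula $w_i = x_0^{q^i} + \pi x_1^{q^{i-1}} + \cdots + \pi^i x_i$ lets one solve for $x_i$ in $B \otimes_R K$, showing $w$ is injective on $B^{n+1}$. Consequently any functorial $R$-algebra structure on $W_n(B)$ making $w$ a natural transformation is forced, on a polynomial ring, to pull back the componentwise structure on $\Pi^n_\phi B$. By functoriality, it is then uniquely determined on every $R$-algebra.

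For existence, I would prove a $\pi$-typical Dwork-type lemma: if $A$ is a $\pi$-torsion-free $R$-algebra with a Frobenius lift $\phi_A$ extending $\phi$, then a tuple $(w_0,\ldots,w_n)\in A^{n+1}$ lies in the image of the ghost map if and only if $\phi_A(w_i)\equiv w_{i+1} \pmod{\pi^{i+1}A}$ for $0\le i<n$. The proof is by induction on $i$: the unknown component $x_{i+1}$ must equal $\pi^{-i-1}(w_{i+1} - x_0^{q^{i+1}} - \pi x_1^{q^i} - \cdots - \pi^i x_i^q)$, and the hypothesis, combined with the elementary congruence $\phi_A(a)^{q^k}\equiv a^{q^{k+1}}\pmod{\pi^{k+1}}$, implies this expression lies in $A$.

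Next, I apply the lemma to $A = R[X_0,\ldots,X_n,Y_0,\ldots,Y_n]$ with the Frobenius lift sending $X_j\mapsto X_j^q$, $Y_j\mapsto Y_j^q$ and acting as $\phi$ on $R$. A direct calculation confirms $\phi_A(w_j(\mathbf{X}))\equiv w_{j+1}(\mathbf{X})\pmod{\pi^{j+1}}$, and analogously for $\mathbf{Y}$. Hence the tuples $(w_j(\mathbf{X})+w_j(\mathbf{Y}))_j$ and $(w_j(\mathbf{X})\,w_j(\mathbf{Y}))_j$ satisfy the Dwork condition, producing universal sum and product polynomials $S_j, P_j\in R[\mathbf{X},\mathbf{Y}]$ with $w_j(\mathbf{S})=w_j(\mathbf{X})+w_j(\mathbf{Y})$ and $w_j(\mathbf{P})=w_j(\mathbf{X})\,w_j(\mathbf{Y})$. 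These polynomials define addition and multiplication on $W_n(B)=B^{n+1}$ for any $R$-algebra $B$, functorially in $B$, and by construction make $w$ a natural transformation of $R$-algebra-valued functors. The ring axioms propagate from the universal polynomial ring, where they follow from the axioms for $\Pi_\phi^n$ via injectivity of the ghost map, and then hold on arbitrary $B$ by specialization.

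The principal obstacle is the inductive congruence $\phi_A(a)^{q^k}\equiv a^{q^{k+1}}\pmod{\pi^{k+1}}$ underlying Dwork's lemma. It reduces by iteration to the fact that $a\equiv b\pmod{\pi^k}$ implies $a^q\equiv b^q\pmod{\pi^{k+1}}$ for $k\ge 1$, which in turn rests on the $p$-divisibility of $\binom{q}{i}$ for $1\le i\le q-1$ (using that $p\in\mfrak{m}$ in any DVR of residue characteristic $p$). This is a standard calculation for the $\pi$-typical Witt functor, and rather than reproduce it in detail I would cite \cite{bor11a}, to which the theorem is attributed.
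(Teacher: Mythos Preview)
Your argument is correct: it is the standard Dwork-lemma construction of the $\pi$-typical Witt ring, and the details (injectivity of the ghost map on $\pi$-torsion-free algebras for uniqueness, the Dwork congruence criterion for existence of universal addition and multiplication polynomials, and the key inductive congruence $a\equiv b\bmod\pi^k\Rightarrow a^q\equiv b^q\bmod\pi^{k+1}$) are all in order.

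There is, however, nothing to compare against: the paper does not give its own proof of this statement. It simply records the theorem and cites \cite{hessl05}, page~141, as a reference (with \cite{bor11a}, \cite{Lazard:CFG}, and \cite{Bourbaki:CommAlg} mentioned nearby for the equivalence of the three descriptions of $W_n$). Your write-up therefore supplies strictly more than the paper does; if anything, the only adjustment would be to align the citation with the paper's choice of \cite{hessl05} rather than \cite{bor11a}, though either reference covers the result.
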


The agreement between descriptions (1)--(3) above can be seen as follows: (2) is a particular case of (1), and
conversely, (1) is determined by (2) and functoriality because every $R$-algebra is a quotient of a flat
$R$-algebra. To show (3) and (1) are equivalent, it is again enough to restrict to flat $R$-algebras, in which
case it follows from Lazard~\cite{Lazard:CFG}, VII\S 4, or Bourbaki~\cite{Bourbaki:CommAlg}, IX.44, exercise 14a.
Alternatively, one can see sections 1--3 of~\cite{bor11a}.

The realization that Witt vectors, which have traditionally been defined as in (3) above, have a satisfying
definition as the solution to a universal property, as in (1), is due to Joyal~\cite{joyal}.

\subsection{Operations on Witt vectors}
\label{subsec-witt-operations}
Now we recall some important operators on the Witt vectors. 
They are the unique functorial operators corresponding under the ghost map
to the operators $T_w$, $V_w$, and $F_w$ on the ghost rings defined above.
First, the \emph{restriction}, or \emph{truncation}, maps $T:W_n(B) \map W_{n-1}(B)$ 
are given by $T(x_0,\dots,x_n) = 
(x_0,\dots, x_{n-1})$. Note that $W(B) = \varprojlim W_n(B)$.
There is also the {\it Frobenius} ring homomorphism
$F:W_n(B) \map W_{n-1}(B)$, which can be described in terms of the ghost map.
It is the unique map which is functorial in $B$ and makes the
following diagram commutative
        \begin{equation}
        \xymatrix{
        W_n(B) \ar[r]^w \ar[d]_F & \Pi^n_{\phi} B \ar[d]^{F_w} \\
        W_{n-1}(B) \ar[r]_-w & \Pi_{\phi}^{n-1} B^n
        } \label{F}
        \end{equation}
As with the ghost components, $T$ is an $R$-algebra map but $F$ lies over the Frobenius endomorphism $\phi$
of $R$.

Next we have the {\it Verschiebung} $V:W_{n-1}(B) \map W_n(B)$ given by
$$V(x_0,\dots,x_{n-1}) = (0,x_0,\dots,x_{n-1}).$$ Let
$V_w:\Pi_{\phi}^{n-1}B \map \Pi_{\phi}^n B$ be the additive map given by
$$V_w(w_0,..,w_{n-1})= (0, \pi w_0,\dots,\pi w_{n-1}).$$ Then the Verschiebung
$V$ makes the following diagram commute:
        \begin{equation}
        \xymatrix{
        W_{n-1}(B) \ar[r]^-w \ar[d]_V & \Pi_{\phi}^{n-1}B \ar[d]^{V_w} \\
        W_n(B) \ar[r]_-w & \Pi_{\phi}^nB
        }\label{V}
        \end{equation}
For all $n \geq 0$ the Frobenius and the Verschiebung satisfy the identity
        \begin{equation}
        \label{FV-pi}
        FV(x) = \pi x.
        \end{equation}
The Verschiebung is not a ring homomorphism (unless $B=0$), but it is $\bZ$-linear.

Finally, we have the multiplicative Teichm\"uller map $[~]:B \map W_n(B)$ given by
$x\mapsto [x]= (x,0,0,\dots)$.

\subsection{Prolongation sequences and jet spaces}
Let $X$ and $Y$ be $\pi$-formal schemes over $S=\Spf R$. We say a pair 
$(u,\d)$ is a {\it prolongation}, and write 
$Y \stk{(u,\d)}{\map} X$, if $u: Y \map X$ is a map of $\pi$-formal schemes 
over $S$ and $\d: \Ou_X \map u_*\Ou_Y$ is a 
$\pi$-derivation making the following diagram commute: 
	$$
	\xymatrix{
	R \ar[r] &  u_* \Ou_Y \\
	R \ar[u]^\d \ar[r] &  \Ou_X \ar[u]_\d \\
	} 
	$$ 
Following Buium~\cite{bui00}(page 103), a {\it prolongation sequence} is a sequence 
of prolongations
	$$
	\xymatrix{
	S & T^0 \ar_-{(u,\d)}[l] & T^1 \ar_-{(u,\d)}[l] & \cdots\ar_-{(u,\d)}[l]},
	$$
where each $T^n$ is a $\pi$-formal scheme over $S$, satisfying 
$$u^* \circ \d = \d \circ u^*
$$
and $u^*$ is the pull-back morphism on the sheaves induced by $u$.
We will often use the 
notation $T^*$ or $\{T_n\}_{n \geq 0}$.
Note that if the  $T^n$ are flat over $S$ then having a 
$\pi$-derivation $\d$ is equivalent to having lifts of Frobenius $\phi:T^{n+1}\to T^n$.

Prolongation sequences form a category $\mcal{C}_{S^*}$, where a morphism $f:T^*\to U^*$ is 
a family of morphisms $f^n:T^n\to U^n$ commuting with both the $u$ and $\d$, in the evident sense.
This category has a final object $S^*$ given by $S^n=\Spf R$ for all $n$, where each $u$ is the identity and
each $\d$ is the given $\pi$-derivation on $R$.


For any $\pi$-formal scheme $X$ over $S$, for all $n \geq 0$ we define the 
$n$-th jet space $J^nX$ (relative to $S$) as a functor by
	$$
	J^nX (C) :=  X(W_n(C)) = \Hom_S(\Spf(W_n(C)),X),
	$$
for any $R$-algebra $C$.
This functor is representable by a $\pi$-formal scheme.
(This was established in two independent threads. First, one can use theorem 12.1 of~\cite{bor11b} to prove the 
representability of the absolute arithmetic jet space $W_{n*}(X)$ and then invoke the equality 
$J^nX = S\times_{W_{n*}(S)} W_{n*}(X)$. Second, one can use
theorem 2.3.24 of~\cite{arnab-thesis}, which proves Buium's original
construction represents the desired functor.)

Then $J^*X:= \{J^nX \}_{n \geq 0}$ forms a prolongation sequence and is 
called the {\it canonical prolongation sequence} as in proposition 1.1 in
\cite{bui00}. By the same proposition 1.1 in 
\cite{bui00}, $J^*X$ satisfies the following 
universal property---for any $T^* \in \mcal{C}_{S^*}$ and $X$ a $\pi$-formal
scheme over 
$S^0$, we have
\begin{equation}	
\label{canprouniv}
	\Hom(T^0,X) = \Hom_{\mcal{C}_{S^*}}(T^*, J^*X)
\end{equation}

Let $X$ be a $\pi$-formal scheme over 
$S= \Spf R$. Define $X^{\phi^n}$ by $X^{\phi^n}(B) := X(B^{\phi^n})$ for any $R$-algebra $B$. 
In other words, $X^{\phi^n}$ is $X \times_{S,\phi^n} S$, the pull-back of $X$ under the map $\phi^n:S\to S$.
(N.B., $(\Spf A)^{\phi^n}$ should not be confused with $\Spf(A^{\phi^n})$.)
Next define the following product of $\pi$-formal schemes
	$$
	\Pi^n_{\phi} X= X \times_S X^{\phi} \times_S \cdots \times_S X^{\phi^n}.
	$$ 
Then for any $R$-algebra $B$ we have $X(\Pi_{\phi}^n B) = X(B)\times_S \cdots \times_S X^{\phi^n}(B)$.
Thus the ghost map $w$ in theorem \ref{wittdef} defines a map of $\pi$-formal
$S$-schemes 
	$$
	w:J^nX \map \Pi_{\phi}^nX.
	$$ 
Note that $w$ is injective when evaluated on points with coordinates in any flat $R$-algebra.

The operators $F$ and $F_w$ in (\ref{F}) induce maps $\phi$ and $\phi_w$ fitting into a commutative diagram
\begin{equation}
	\label{phiw}
	\xymatrix{
	J^nX \ar[r]^w\ar[d]_{\phi} & \Pi_{\phi}^n X \ar[d]^{\phi_w}\\
	J^{n-1}X \ar[r]_w & \Pi_{\phi}^{n-1} X.
	}
\end{equation}
The map $\phi_w$ is easier to define. It is the left-shift operator given by
	$$
	\phi_w(w_0,\dots,w_n)= (\phi_S(w_1),\dots,\phi_S(w_n)),
	$$
where  $\phi_S:X^{\phi^i} \map X^{\phi^{i-1}}$ is the composition given in the following diagram:
\begin{equation}
	\label{ko}
	\xymatrix{
	X^{\phi^i}\ar[r]^-{\sim} & X^{\phi^{i-1}} \times_{S,\phi} S \ar[d] 
	\ar[r]^-{} & 
	X^{\phi^{i-1}} \ar[d] \\
	& S \ar[r]_{\phi} & S.
	}
\end{equation}
We note that a choice of a coordinate system on $X$ over $S$ induces coordinate systems on 
$X^{\phi^i}$ for each $i$, and with respect to these coordinate systems,
$\phi_S$ is expressed as the identity. One might say that $\phi_S$ 
applies $\phi$ to the horizontal coordinates and does nothing to the vertical coordinates.

For the map $\phi:J^nX\to J^{n-1}X$, we can define it in terms of the functor of points.
For any $R$-algebra $B$, the ring map $F:W_n(B)\to W_{n-1}(B)$ is not $R$-linear
but lies over $\phi:R\to R$. As $B$ varies, the resulting linearized $R$-algebra maps 
	$$
	W_n(B)\to W_{n-1}(B)^{\phi} = W_{n-1}(B^{\phi}),
	$$
induce functorial maps
\begin{equation}
	J^nX(B) = X(W_n(B)) \longmap X(W_{n-1}(B^{\phi})) = J^{n-1}X(B^{\phi}),
\end{equation}
which is the same as giving a morphism $\phi:J^n X\to J^{n-1}X$ lying over $\phi:S\to S$.

If $A$ is a $\pi$-formal group scheme over $S$,
the ghost map $w:J^nA\to \Pi_{\phi}^n A$ and the truncation map 
$u:J^nA\to J^{n-1}A$
are $\pi$-formal group scheme homomorphisms over $S$. 
On the other hand, the Frobenius maps $\phi:J^nA\to J^{n-1}A$ and $\phi_w:\Pi_{\phi}^nA \to \Pi_{\phi}^{n-1}A$
are $\pi$-formal group scheme homomorphisms lying over 
the Frobenius endomorphism $\phi$ of $S$.

\subsection{Character groups}
\label{subsec-char-groups}
Given a prolongation sequence $T^*$ we can define its shift $T^{*+n}$ by 
$(T^{*+n})^j:= T^{n+j}$ for all $j$, page 106 in \cite{bui00}.
	$$
	S \stk{(u,\d)}{\leftarrow} T^n \stk{(u,\d)}{\leftarrow} T^{n+1}\dots 
	$$
We define a {\it $\d$-morphism of order $n$} from $X$ to $Y$ to be a 
morphism $J^{*+n}X \map J^*Y$ of prolongation sequences.
We define a {\it character of order $n$}, $\Theta:A \map \hG$ 
to be a $\d$-morphism of order $n$ from $A$ to $\hG$
which is also a group homomorphism of $\pi$-formal group schemes.
By the universal property of jet schemes as in (\ref{canprouniv}),
 an order $n$ character is equivalent to a homomorphism
$\Theta:J^nA \map \hG$ of $\pi$-formal group schemes over $S$. 
We denote the group of 
characters of order $n$ by $\bX_n(A)$:
	$$
	\bX_n(A)=\Hom_S(J^nA,\hG).
	$$
Note that $\bX_n(A)$ comes with an
$R$-module structure since $\hG$ is an $R$-module object in the 
category of $\pi$-formal schemes over $S$. Also the inverse system 
$J^{n+1}A \stk{u}{\map} J^nA$ defines a directed system 
	$$
	\bX_n(A) \stk{u^*}{\map} \bX_{n+1}(A) \stk{u^*}{\map}\cdots
	$$
via pull back. Each morphism $u^*$ is injective and 
we then define $\bX_\infty(A)$ to be the direct limit $\varinjlim \bX_n(A)$.

Similarly, pre-composing with the Frobenius map $\phi:J^{n+1}A\to J^nA$ induces a Frobenius operator
$\phi:\bX_n(A)\to \bX_{n+1}(A)$. However since $\phi:J^{n+1}A\to J^nA$ is not a morphism over $S$ but
instead lies over the Frobenius endomorphism $\phi$ of $S$, 
some care is required.
Consider the relative Frobenius morphism $\phi_R$, defined to be the unique
morphism making the following diagram commute:
	$$
	\xymatrix{
	J^{n+1}A \ar@{.>}^{\phi_R}[rd] \ar@/^/[rrd]^{\phi} \ar@/_/[ddr]& & \\
	& J^nA \times_{S,\phi} S \ar[d] \ar[r] & J^nA \ar[d] \\
	& S \ar[r]_{\phi} & S 
	}
	$$
Then $\phi_R$ is a morphism of $\pi$-formal  group schemes over $S$.
Now given a $\d$-character $\Theta:J^nA\to \hG$, define $\phi^*\Theta$ to be 
the composition
\begin{equation}
	J^{n+1}A \longlabelmap{\phi_R} J^nA \times_{S,\phi} S 
	\longlabelmap{\Theta\times\mathrm{id}} \hG\times_{S,\phi} S
\longlabelmap{\iota} \hG
\end{equation}
where $\iota$ is the isomorphism of $\pi$-formal group schemes over $S$
coming from the fact that $\hG$ descends to $\bZ_p$ as a $\pi$-formal group 
scheme. For any $R$-algebra $B$, the induced morphism on $B$-points is
	$$
	A(W_{n+1}(B)) \longlabelmap{A(F)} A(W_n(B)^{\phi}) 
\longlabelmap{\Theta_B^{\phi}} 
	B^{\phi} \longlabelmap{b\mapsto b} B.
	$$
Note that this composition is indeed a morphism of $\pi$-formal group schemes.

Thus we have an additive map $\bX_n(A) \to \bX_{n+1}(A)$ given by 
$\Theta\mapsto \phi^*\Theta$. Note
that this map is not $R$-linear. However, the map
	$$
	\phi^*:\bX_n(A) \longmap \bX_{n+1}(A)^{\phi}, \quad \Theta\mapsto \phi^*\Theta 
	$$ 
is $R$-linear, where $\bX_{n+1}(A)^{\phi}$ denotes the abelian group 
$\bX_{n+1}(A)$ with 
$R$-module structure defined by the law $r\cdot \Theta := \phi(r)\Theta$.
Taking direct limits in $n$, we obtain an $R$-linear map
\begin{equation}
\label{phi-pull-back}
	\bX_\infty(A) \longmap \bX_\infty(A)^{\phi}, \quad \Theta\mapsto\phi^*
\Theta.  
\end{equation}
In this way, $\bX_\infty(A)$ is a left module over the twisted polynomial ring $R\{\phi^*\}$ with
commutation law $\phi^*r = \phi(r)\phi^*$.

\section{Lateral Frobenius}
\label{sec-lat-frob}
Let $B$ be an $R$-algebra and  $f:R \map B$ be the structure map. 
Consider the following map of exact sequences:
        $$
        \xymatrix{
        0 \ar[r] & W_{n-1}(B) \ar^V[r] & W_n(B) \ar[r] & B \ar[r] & 0 \\
        0 \ar[r] & W_{n-1}(B) \ar^-V[r]\ar@{=}[u] & R\times_B W_n(B) \ar[r]\ar^I[u] & R \ar[r]\ar^f[u] & 0		
        }
        $$
Here the top row is the usual exact sequence, and the bottom row is the pull back of the top row along 
the structure map $f$. Let us write
	$$
	\Wp_n(B) = R\times_B W_n(B),
	$$
which is naturally an $R$-algebra.
One can think of elements of $\Wp_n(B)$ as usual Witt vectors but
with the initial component being an element of $R$ rather than $B$. Indeed, we have a bijection
\begin{align*}
	R \times B^n &\to R\times_B W_n(B) \\
	(r,b_1,\dots,b_n)&\mapsto \big(r,(f(r),b_1,b_2,\dots,b_n)\big)	
\end{align*}
The addition and multiplication laws on $\Wp_n(B)$ are then defined in terms of these components
by the usual universal polynomials for Witt vector addition and multiplication, keeping in mind that
$B$ is an $R$-algebra.

Now we will define an $R$-algebra homomorphism 
\begin{equation}
\label{map:lateral-Frob}
	\Fp:\Wp_n(B) \map \Wp_{n-1}(B^{\phi}),
\end{equation}
which we call the {\it lateral Frobenius} map.
First observe that the projection $\Wp_n(B)\to R$ has a canonical section. Indeed, the Frobenius lift $\phi$
on $R$ induces a lift 
$$\tilde{f}:R\to W_n(B)$$ 
of $f$ and hence a section of the map $\Wp_n(B)\to R$. Therefore, we have a bijection
\begin{align*}
	R\oplus W_{n-1}(B) &\to \Wp_n(B) \\
	(r,z) &\mapsto \tilde{f}(r)+V(z).
\end{align*}
In terms of this
identification, we then define the lateral Frobenius map (\ref{map:lateral-Frob})
simply to be the Frobenius map on the second summand:
	$$
	(r,z)\mapsto (r,F(z)).
	$$
So $\Fp$ is defined by the expression
	$$
	\Fp(f(r)+V(z)) = f(r) + VF(z).
	$$

There are ghost analogues of $\Wp_n$ and $\Fp$.
Let us define
	$$
	\Pp_{\phi}^n(B)=R\times_B \Pi_{\phi}^n(B) = R\times B^{\phi}\times\cdots\times B^{\phi^n},
	$$
with the product $R$-algebra structure, and the $R$-algebra map 
\begin{align*}
	\Fp_w:\Pp_{\phi}^n(B) &\to \Pp_{\phi}^{n-1}(B^{\phi}) \\
	(r,z_1,z_2,\dots, z_n) &\mapsto (r, z_2,z_3, \dots, z_n),
\end{align*}
which drops the $z_1$ component.
Then the following diagram commutes:
\begin{equation}
\label{diag:lateral-ghost}
	\xymatrix{
		\Wp_n(B) \ar[d]_{\Fp} \ar[r]^w & \Pp_{\phi}^n(B) \ar[d]^{\Fp_w} \\
		\Wp_{n-1}(B^{\phi}) \ar[r]^w & \Pp_{\phi}^{n-1}(B^{\phi})
	}
\end{equation}
Indeed, this follows straight from the description of $V$ in terms the ghost components.
The ghost principle then
implies that $\Fp$ is also an $R$-algebra homomorphism:
Since any $R$-algebra is a quotient of a $\pi$-torsion free one, and
since $\Wp$ preserves surjectivity and $\Fp_w$ is functorial in $B$,
we may assume that $B$ itself is $\pi$-torsion free. Then
the ghost map $w$ is injective and hence it is sufficient, by the diagram above,
to recall that $\Fp_w$ is an $R$-algebra homomorphism.

\begin{proposition}
\label{latfrobw}
Write $\Fp(r,x_1,\dots,x_n) = (r, \Fp_1, \dots, \Fp_{n-1})$.
Then 
	$
	\Fp_i \equiv x_i^q \bmod \pi.
	$
\end{proposition}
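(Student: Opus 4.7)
My plan is to work via the commutative ghost diagram (\ref{diag:lateral-ghost}) together with the standard ghost principle. By functoriality of $\Wp_n$ in $B$, and since every $R$-algebra is a quotient of a $\pi$-torsion-free one, I may assume $B$ is $\pi$-torsion free, in which case the ghost map $w$ is injective and the Witt components $\Fp_1,\dots,\Fp_{n-1}$ are uniquely determined by their ghost components. Since $\Fp_w$ drops the $z_1$-coordinate (from the explicit formula preceding (\ref{diag:lateral-ghost})), the $i$-th ghost component of $\Fp(r,x_1,\dots,x_n)$ coincides with the $(i{+}1)$-st ghost component of the source; for $i = 1, \dots, n-1$ this yields the identity
\begin{equation*}
f(\phi(r))^{q^i} + \pi\Fp_1^{q^{i-1}} + \pi^2\Fp_2^{q^{i-2}} + \cdots + \pi^i\Fp_i
= f(r)^{q^{i+1}} + \pi x_1^{q^i} + \pi^2 x_2^{q^{i-1}} + \cdots + \pi^{i+1}x_{i+1}.
\end{equation*}

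I would then prove $\Fp_i \equiv x_i^q \pmod{\pi}$ by induction on $i$. Writing $\phi(r) = r^q + \pi \d r$ and expanding
$$
f(\phi(r))^{q^i} = \bigl(f(r)^q + \pi f(\d r)\bigr)^{q^i}
$$
by the binomial theorem, the standard identity $v_p\!\binom{p^{ki}}{j} = ki - v_p(j)$ (valid for $1 \leq j \leq q^i$), combined with the elementary estimate $p^m \geq (p-2)m + 1$ for $m \geq 0$ and the ramification hypothesis $e \leq p-2$, gives $v\!\binom{q^i}{j} + j \geq i+1$ for every $j \geq 1$; hence $f(\phi(r))^{q^i} \equiv f(r)^{q^{i+1}} \pmod{\pi^{i+1}}$. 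The inductive hypothesis $\Fp_j \equiv x_j^q \pmod{\pi}$ for $j < i$ likewise upgrades, upon raising to $q^{i-j}$ and multiplying by $\pi^j$, to $\pi^j \Fp_j^{q^{i-j}} \equiv \pi^j x_j^{q^{i-j+1}} \pmod{\pi^{i+1}}$ via the same binomial estimates.

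Substituting these congruences into the $i$-th ghost identity and reducing modulo $\pi^{i+1}$, every term apart from $\pi^i \Fp_i$ on the left and $\pi^i x_i^q$ on the right either cancels or lies in $\pi^{i+1} B$. This leaves $\pi^i \Fp_i \equiv \pi^i x_i^q \pmod{\pi^{i+1}}$; dividing by $\pi^i$ (permissible by $\pi$-torsion freeness of $B$) yields the claim. The main obstacle is the careful bookkeeping of $\pi$-adic valuations in the binomial expansions, but no deeper idea is needed beyond these elementary estimates.
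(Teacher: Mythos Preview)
Your argument is correct. It differs substantially from the paper's proof, which is a one-line structural observation: the paper notes that the claim amounts to the commutativity of a square expressing that $\Fp$ reduces modulo $\pi$ to the usual Witt-vector Frobenius $F$, and then deduces this from the identity $FV=VF$ on Witt vectors of $B/\pi B$ (valid because $B/\pi B$ has characteristic $p$), combined with the defining formula $\Fp(\tilde f(r)+V(z))=\tilde f(r)+VF(z)$. Your route instead unwinds everything through the ghost map: you write down the ghost identity coming from the explicit description of $\Fp_w$ in diagram~(\ref{diag:lateral-ghost}), and then run an induction on $i$ using elementary $\pi$-adic estimates for the binomial coefficients $\binom{q^i}{j}$. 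The paper's argument is shorter and isolates the underlying Witt-vector reason; yours is self-contained, makes no appeal to identities among $F$ and $V$, and would be easier to adapt if one wanted finer congruence information modulo higher powers of $\pi$. One small remark: you invoke the ramification bound $e\le p-2$ in your valuation estimate, whereas the paper's proof does not use it; this is harmless here since $e\le p-2$ is a standing hypothesis, but it is worth noting that the statement itself holds without that restriction.
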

\begin{proof}
This is equivalent to showing that the following diagram commutes:
	$$
	\xymatrix{
	W_n(B) \ar^{\Fp}[r]\ar^{V}[d] & W_{n-1}(B^{\phi}) \ar^{V}[d] \\
	W_{n+1}(B/\pi B) \ar^{F}[r] & W_n(B^{\phi}/\pi B^{\phi})
	}
	$$
This follows
from the identity $FV=VF$ on $W_n(B/\pi B)$, which holds since $B/\pi B$ is in characteristic $p$.
\end{proof}

\begin{proposition}
\label{fdid}
Let $I: \Wp_n(B) \map W_n(B)$ be the natural map. Then we have
	$$
	F^2 \circ I = F \circ I \circ \Fp.
	$$
\end{proposition}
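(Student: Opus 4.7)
The plan is to prove this via the ghost principle. I would first reduce to the case where $B$ is $\pi$-torsion free by the standard functoriality argument: every $R$-algebra is a quotient of a flat one, $W_n$ and $\Wp_n$ preserve surjections, and $I$, $F$, $\Fp$ are all functorial in $B$. In the $\pi$-torsion free case the ghost map $w : W_m(B) \to \Pi_\phi^m(B)$ is injective, and this injectivity descends to $w : \Wp_m(B) \to \Pp_\phi^m(B)$ via the pull-back definitions of source and target. Thus it suffices to verify the identity after composing with $w$.

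Next I would translate each of $I$, $F$, $\Fp$ into its ghost counterpart $I_w$, $F_w$, $\Fp_w$, using diagrams (\ref{diag:lateral-ghost}) and (\ref{F}) together with the evident square for $I$: $F_w$ is the left shift, $\Fp_w$ drops the entry $z_1$, and $I_w$ replaces the initial $r\in R$ by $f(r)\in B$. For $x\in\Wp_n(B)$ with $w(x)=(r,w_1,w_2,\dots,w_n)\in \Pp_\phi^n(B)$, the two sides of the identity become
\begin{align*}
F_w \circ F_w \circ I_w(w(x))
&= F_w \circ F_w(f(r),w_1,w_2,\dots,w_n) = (w_2,\dots,w_n),\\
F_w \circ I_w \circ \Fp_w(w(x))
&= F_w \circ I_w(r,w_2,\dots,w_n) = F_w(f(\phi(r)),w_2,\dots,w_n) = (w_2,\dots,w_n).
\end{align*}
Both compositions simply drop the first two ghost components; only the order in which $f(r)$ and $w_1$ are discarded differs. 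The ghost principle then finishes the proof.

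The main subtlety to watch is that $F^2\circ I$ naturally lands in $W_{n-2}(B)$ while $F\circ I\circ \Fp$ lands in $W_{n-2}(B^\phi)$; these coincide as underlying rings (only the $R$-algebra structures differ), so the asserted equality of ring maps is well-posed and the ghost calculation above manifestly respects this identification. A more pedestrian alternative is to write $x=\tilde{f}(r)+V(z)$ and use $FV=\pi$ together with $F\circ\tilde{f}=\tilde{f}\circ\phi$, but this requires carefully matching the two sections $\tilde{f}_B$ and $\tilde{f}_{B^\phi}$ via the twist $\tilde{f}_{B^\phi}(r)=\tilde{f}_B(\phi(r))$; the ghost approach bypasses this bookkeeping.
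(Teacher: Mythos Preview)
Your proposal is correct and takes essentially the same approach as the paper: invoke the ghost principle and verify $F_w^2\circ I_w = F_w\circ I_w\circ\Fp_w$ by direct computation, both sides being $(w_2,\dots,w_n)$. You are in fact slightly more careful than the paper, which elides the reduction to $\pi$-torsion free $B$ and the distinction between $f(r)$ and $f(\phi(r))$ when $I_w$ is applied before versus after $\Fp_w$; your remark about $W_{n-2}(B)$ versus $W_{n-2}(B^\phi)$ is a genuine bookkeeping point that the paper leaves implicit.
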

\begin{proof}
Again by the ghost principle, it is enough to check the corresponding identity on the ghost vectors.
So, if $I_w: \Pp_{\phi}^n(B) \map \Pi_{\phi}^n (B)$ denotes
the natural inclusion, then it is sufficient to show 
	$$
	F^2_w \circ I_w = F_w \circ I_w \circ \Fp_w.
	$$

This is a straightforward verification. We have 
\begin{eqnarray*}
(F_w \circ I_w \circ \Fp_w) (r,z_1,\dots, z_n) &=& (F_w \circ I_w) (r,z_2,\dots, z_n) \\
&=&F_w(r,z_2,\dots, z_n) \\
&=& (z_2,\dots, z_n).
\end{eqnarray*}
On the other hand,
\begin{eqnarray*}
(F_w^2 \circ I_w)(r,z_1,\dots, z_n) &=& F_w^2(r,z_1,\dots, z_n) \\
&=& F_w(z_1,\dots,z_n) \\
&=& (z_2,\dots,z_n).
\end{eqnarray*}
And this completes the proof.
\end{proof}

{\bf Remark.} Here we would like to note that $F \circ I  \ne I \circ \Fp$. This
is because if we again look at the identity on the ghost side, then
$F_w \circ I_w (z_0,\dots, z_n) = (z_1,\dots, z_n)$, whereas $I_w \circ 
\Fp_w(z_0,\dots, z_n) = (z_0,z_2,\dots, z_n)$ making the identity 
not possible. One can also view this as a manifestation of the inequality $FV\neq VF$.

For any $\pi$-formal $S$-scheme $X$, with an $S$-point $P:S \map X$, for each 
$n$ we can define $N^n = J^nX \times_X S$, which is the following fiber product
$$
\xymatrix{
	N^n \ar[d]  \ar[r]^-i & J^nX \ar[d] \\
	S \ar[r]^P  & X 
}$$
Thus $N^n(B)$ is the set of thickenings of the point $P$ to a map $h:\Spf (\Wp_n(B))\to X$:
	$$
	\xymatrix{
	\Spf(R) \ar^{P}[r]\ar[d] & X \\
	\Spf (\Wp_n(B)) \ar@{-->}^{h}[ur]
	}
	$$

The projections $\Wp_n(B)\map \Wp_{n-1}(B)$ on Witt vectors therefore induce
maps $u:N^n \map N^{n-1}$.
Similarly, the lateral Frobenius maps $\Fp:\Wp_n(B) \map \Wp_{n-1}(B^{\phi})$ on Witt vectors induce 
lateral Frobenius maps in geometry
	$$
	\mff: N^n \map N^{n-1}.
	$$
By definition, they send a morphism $h$ as above to its composition with the Witt vector lateral Frobenius
	$$
	\Spf (\Wp_{n-1}(B^{\phi})) \longlabelmap{\Fp} \Spf (\Wp_n(B)) \longlabelmap{h} X.
	$$
Indeed, $\Fp$ is a morphism of $R$-algebras compatible with the projections to $R$.
The lateral Frobenius $\mff:N^n\map N^{n-1}$ is a group homomorphism, but 
we emphasize however that it only is semi-linear. It
does not lie over the identity of $S$ but over the 
endomorphism $\phi$ of $S$. This is because it is the twist $B^{\phi}$,  and not $B$,
which appears in $\Wp_{n-1}(B^{\phi})$ in the definition of $\mff$.

\begin{theorem}
\label{latfrob}
The morphism $\mff: N^n \map N^{n-1}$ is a lift of Frobenius and satisfies
	$$ 
	\phi^{\circ j} \circ i = \phi^{j-1} \circ i \circ \mff
	$$
for $j\geq 2$. If $X$ is smooth over $S$, then $\{N^n\}_{n=1}^\infty$ forms a prolongation sequence.
\end{theorem}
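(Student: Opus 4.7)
\emph{Proof proposal.} All three assertions should translate directly from the Witt-vector identities established earlier in this section, via the functorial description
$$ N^n(B) = \{h:\Spf(\Wp_n(B))\to X : h|_{\Spf R} = P\}, $$
which realizes $\mff$, $u$, and $i$ as pre-composition with the Witt-vector morphisms $\Fp$, truncation $T$, and the natural map $I$ respectively, and realizes $\phi$ on $J^{\bullet}X$ as pre-composition with $F$.

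For the first claim, that $\mff$ is a lift of Frobenius: since $\mff^{*}$ is induced by $\Fp$, the mod-$\pi$ congruence $\Fp_i\equiv x_i^q\pmod{\pi}$ of Proposition \ref{latfrobw} precisely says that $\mff^{*}$ coincides modulo $\pi$ with the $q$-th power of the structure map, which is the definition of a Frobenius lift relative to $u$.

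For the identity $\phi^{\circ j}\circ i = \phi^{j-1}\circ i\circ \mff$, I would prove the case $j=2$ first by applying Proposition \ref{fdid} on $B$-points: it yields $F^2\circ I = F\circ I\circ \Fp$, which after passing to $\Spf$ and the functor of points gives $\phi^{\circ 2}\circ i = \phi\circ i\circ \mff$. The case $j\geq 3$ then follows by composing with $\phi^{\circ(j-2)}$ on the left and invoking the base case:
$$ \phi^{\circ j}\circ i = \phi^{\circ(j-2)}\circ \phi^{\circ 2}\circ i = \phi^{\circ(j-2)}\circ \phi\circ i\circ \mff = \phi^{\circ(j-1)}\circ i\circ \mff. $$

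For the last assertion, smoothness of $X$ makes $N^n$ formally isomorphic to $\hat{\mathbb{A}}^{ng}_R$, so each $N^n$ is $\pi$-adically flat and a Frobenius lift on $N^n\to N^{n-1}$ is equivalent to a $\pi$-derivation on its structure sheaf. The prolongation-sequence compatibility then amounts to verifying $u\circ \mff = \mff\circ u$ as morphisms $N^{n+1}\to N^{n-1}$, which on the Witt-vector side is the identity $T\circ \Fp = \Fp\circ T$. Using the decomposition $\Wp_{\bullet}(B) = R\oplus W_{\bullet-1}(B)$ provided by the Frobenius-lift-induced section $\tilde{f}$, both $T$ and $\Fp$ act as the identity on the $R$-summand, while on the $W$-summand they act as the usual truncation and Frobenius respectively, so their commutativity reduces to the standard Witt-vector identity $TF=FT$. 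The main point requiring some care, though it is not a deep obstacle, is threading the Frobenius-twisted targets $B^{\phi}$ and the directionality of maps when moving between the Witt-vector and scheme pictures; the substantive algebra has already been carried out in Propositions \ref{latfrobw} and \ref{fdid}.
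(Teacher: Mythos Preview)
Your proposal is correct and follows essentially the same approach as the paper: invoke Proposition~\ref{latfrobw} for the Frobenius-lift claim, reduce the compositional identity to the case $j=2$ via Proposition~\ref{fdid}, and use smoothness (hence flatness) of $N^n$ to convert the Frobenius lift into a $\pi$-derivation. You are in fact slightly more careful than the paper in that you explicitly verify the prolongation-sequence compatibility $u\circ\mff=\mff\circ u$ (equivalently $u^*\circ\delta=\delta\circ u^*$), which the paper's proof leaves implicit.
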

{\it Proof.} 
The statement that $\mff$ is a lift of Frobenius with respect to $u$ is exactly proposition \ref{latfrobw}.
For the compositional identity, we may assume $j=2$:
$$
\phi^{\circ 2} \circ i = \phi \circ i \circ \mff,
$$
which is an immediate consequence of proposition \ref{fdid}.

If $X$ is smooth, then the $N^n$ are smooth and hence flat over $S$ for all $n$. 
Therefore the lift of 
Frobenius $\mff:N^n \map N^{n-1}$ corresponds to a $\pi$-derivation $\d$ with 
respect to $u$ on
the corresponding structure sheaves hence making the system 
$\{N^n\}_{n=1}^\infty$ into a prolongation sequence.  $\qed$

Let $A$ be a $\pi$-formal group scheme over $S$ with identity section $e:S 
\map A$. Let us define $\Ppp{n} A = S \times_A \Pi_{\phi}^n A $ as the 
following fiber product
$$\xymatrix{
\Ppp{n}A \ar[r]^{i_w} \ar[d] & \Pi_{\phi}^n A \ar[d]^{u_w}\\
S \ar[r]_e & A
}$$
where $u_w$ is the usual projection onto the initial coordinate 
of $\Pi_{\phi}^n A$. Let us also 
fix the notation $i_w: \Ppp{n}A \map \Pi_{\phi}^n A$ for the induced morphism, as shown.
Then a point $w \in \Ppp{n}A$ can be expressed as 
$z= (e,z_1,\dots, z_n)$ where $z_i \in A^{\phi^i}$.

Then there are associated morphisms $\up_w, \mfrak{f}_w: \Ppp{n}A
\map \Ppp{n-1}A$, corresponding to $u$ and $\mfrak{f}$ respectively
and they are given by 
\begin{align}
\label{phimore}
\up_w(e,z_1,\dots, z_n) &= (e,z_1,\dots, z_{n-1})  \nonumber \\ 
\mfrak{f}_w(e,z_1,\dots, z_n) &= (e,\phi_S(z_2),\dots, \phi_S(z_n)) 
\end{align}

We then have the following morphism of short exact sequences of $\pi$-formal
 group schemes 
$$
\xymatrix{
0 \ar[r] & N^n \ar[r]^i \ar[d]_-w & J^nA \ar[r]^u \ar[d]_-w & A \ar[r]
 \ar[d] & 0 \\
0 \ar[r] & \Ppp{n}A \ar[r]^{i_w} & \Pi_{\phi}^n A \ar[r]^{u_w}
 & A \ar[r] & 0
} $$

\subsection{Local Witt coordinates}
\label{localWitt}
Assume further that $A$ is a smooth commutative $\pi$-formal group scheme over 
$S$ of relative dimension $g$. 
We now recall a few facts on the \'{e}tale coordinates to compute the 
characters out of $N^n$ to $\hG$, explicitly in coordinates. More details
can be found in, say \cite{bui95}, page 317. 

Recall if $A \map Y$ is an \'{e}tale morphism of $\pi$-formal schemes, then
$J^nA \simeq J^nY \times_Y A$. 
Now assume further that $Y = \Spf~R[\bx_0]\h$, where $\bx_0=(x_{01},\cdots, x_{0g})$ is a
system of \'{e}tale coordinates at the identity section of $A$. We claim that 
$J^nY \simeq W_n^g$, where $W_n$ is
the $\pi$-formal scheme representing the Witt vector of length $n+1$ functor.
Recall, by the definition of the $n$-th arithmetic jet functor, for any $R$-algebra $B$
 we have 
\begin{equation}
J^nY(B) = \Hom_R(R[\bx_0]\h,W_n(B))
\end{equation}
Now giving an element in $\Hom_R(R[\bx_0]\h,W_n(B))$ is equivalent to specifying
the image of the tuple of variables $\bx_0$ in $W_n(B)$. Hence the set
$\Hom(R[\bx_0]\h,W_n(B))$ is isomorphic to $W_n^g$. Therefore $J^nY \simeq 
\Spf R[\bx_0,\dots, \bx_n]\h$, where $\bx_i=(x_{i1},\cdots, x_{ig})$ is the 
Witt coordinate. Therefore we have
\begin{equation}
\label{jcoord}
J^nA \simeq A \times_Y J^nY  \simeq A \times_Y \Spf~R[\bx_0,\dots, \bx_n]\h.
\end{equation}
Hence the $\bx_i$ form an \'{e}tale coordinate system on $J^nA$ around the
identity section. We will call them the local Witt coordinates for $J^nA$ 
since they are induced from the Witt vectors as seen above.
This, in turn, induces a coordinate system on $\Lie J^nA$. 
Also recall that  
$N^n = J^nA \times_A S$, where the fiber-product is taken along the identity 
section $S \stk{e}{\map} A$. Then by composition, we have a section $S 
\stk{e}{\map} A \map Y$ which, at the level of maps between $R$-algebras 
$R[\bx_0] \map R$, is given by $\bx_0 \map 0$. Then we have 
\begin{align*}
N^n &= J^nA \times_A S = (J^nY \times_Y A) \times_A S = J^nY \times_Y S \\
&\simeq \Spf R[\bx_1,\dots, \bx_n]\h 
\end{align*}
For the rest of the paper, we will be using the coordinate system defined
by $\bx_i$ on $N^n$. We emphasize however that once $n>1$, it is not the same coordinate system
as the one used
by Buium, as in corollary (1.5) of~\cite{bui95}, for example. We call our coordinates the
\emph{Witt coordinates} to distinguish them from what might be called the Buium--Joyal coordinates
used in~\cite{bui95}.

\begin{theorem}
\label{comfact}
Let $A$ be a smooth $\pi$-formal group scheme over $S$. Then
the morphism of $\pi$-formal group schemes 
$(i \circ \mfrak{f} - \phi \circ i)\colon N^n \map 
J^{n-1}A$ factors uniquely through  $u:N^n\to N^1$:
$$\xymatrix{
N^n \ar[rr]^-{i \circ \mfrak{f} -\phi \circ i}  \ar[d]_u & & J^{n-1}A \\
N^1 \ar@{..>}[rru]_g & &
}$$
\end{theorem}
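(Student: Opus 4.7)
The plan is to reduce the factorization to showing that the morphism
$$h \;=\; i\circ\mfrak{f} - \phi\circ i\colon N^n \longrightarrow J^{n-1}A$$
of $\pi$-formal group schemes (each of the two summands lies over $\phi\colon S\to S$, so their difference is well-defined and also lies over $\phi$) vanishes on $K := \ker(u\colon N^n \to N^1)$. Since $u$ is a surjection of smooth commutative $\pi$-formal group schemes with smooth kernel $K$ (an iterated extension of copies of $\hat{\mathbb{A}}_R^{g}$), any group-scheme morphism out of $N^n$ killing $K$ descends uniquely to one out of $N^1$; this delivers both the existence and uniqueness of $g$ in one go.

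To verify $h|_K = 0$, I would apply the ghost principle. Smoothness of $A$ implies smoothness, hence $\pi$-torsion freeness, of $J^{n-1}A$, so the ghost map $w\colon J^{n-1}A\to \Pi_{\phi}^{n-1}A$ is injective on $B$-valued points for any flat $R$-algebra $B$. Combining the diagram~\eqref{phiw} with the evident compatibilities $w\circ i = i_w\circ w$ and $w\circ\mfrak{f} = \mfrak{f}_w\circ w$ on the $N^\bullet$ subgroups, one obtains
$$w\circ h \;=\; (i_w\circ\mfrak{f}_w - \phi_w\circ i_w)\circ w.$$
Using the formula for $\phi_w$ recorded in~\eqref{phiw} and the formula for $\mfrak{f}_w$ in~\eqref{phimore}, a direct calculation gives
$$\big(i_w\circ\mfrak{f}_w - \phi_w\circ i_w\big)(e,z_1,z_2,\ldots,z_n) \;=\; (e,\phi_S(z_2),\ldots,\phi_S(z_n)) \;-\; (\phi_S(z_1),\phi_S(z_2),\ldots,\phi_S(z_n)) \;=\; (-\phi_S(z_1),\,e,\,\ldots,\,e),$$
which depends only on the coordinate $z_1$, and therefore vanishes whenever $z_1=e$. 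For any $y\in K(B)$ with $B$ flat, the condition $u(y)=0$ forces $z_1=e$ at the ghost level, whence $w(h(y))=0$; by injectivity of $w$ on flat points, $h(y)=0$.

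Because $K$ is smooth, its scheme structure is determined by its $B$-points for flat $R$-algebras $B$, so this pointwise vanishing upgrades to $h|_K = 0$ as a morphism of $\pi$-formal schemes, and then $h$ descends along $u$ to the unique $g\colon N^1\to J^{n-1}A$ claimed by the theorem. The entire content of the argument is concentrated in the ghost-side identity above, which is yet another manifestation of the same failure of $FV=VF$ that underlies Proposition~\ref{fdid}; the main thing to watch is the bookkeeping of semi-linearity over $\phi$, but this is automatic here because $h$, $u$, and the target $J^{n-1}A$ all carry the same $\phi$-twist.
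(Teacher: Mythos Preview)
Your argument is correct and rests on the same ghost-side computation as the paper: both reduce to the identity
\[
(i_w\circ\mfrak{f}_w - \phi_w\circ i_w)(e,z_1,\ldots,z_n) = (-\phi_S(z_1),e,\ldots,e),
\]
together with injectivity of the ghost map on flat points. The only organizational difference lies in how the factorization is extracted. You show $h$ kills the kernel $K=\ker(u\colon N^n\to N^1)$ and then invoke the universal property of $N^1$ as the quotient $N^n/K$ (valid since $u$ is smooth surjective, hence an fppf quotient). The paper instead chooses a scheme-theoretic section $\sigma\colon N^1\to N^n$ in Witt coordinates, defines $g := h\circ\sigma$ directly, and then verifies $h=g\circ u$ via ghost-map injectivity applied to $N^n$ itself rather than to $K$.

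Your route is slightly more categorical and avoids choosing coordinates for the section; the paper's route is more explicit and sidesteps appealing to the cokernel property in the formal-scheme category. One small point worth tightening in your write-up: the sentence ``its scheme structure is determined by its $B$-points for flat $R$-algebras $B$'' is a bit loose. What you actually need is that $K$ is affine with $\pi$-torsion-free coordinate ring $C$ (it is a closed subscheme of $N^n\simeq\hat{\mathbb{A}}^{ng}$), so $h|_K=0$ is equivalent to the single condition $(h|_K)_C(\mathrm{id}_K)=0$, and $C$ is flat. With that clarification, the argument is complete.
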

{\it Proof.} 
Let $e: S \map A$ denote the identity section. Consider the following diagram
$$\xymatrix{
N^n \ar[r]^w \ar[d]_{x \mapsto (x,-x)} & \Ppp{n}
 A \ar[d]^{z \mapsto (z,-z)}  \\
N^n \times N^n \ar[d]_{i \times \mfrak{f}} 
\ar[r]^w  & \Ppp{n} A \times \Ppp{n} A \ar[d]^{i_w \times 
\mfrak{f}_w}\\
J^n A \times N^{n-1} \ar[d]_{\phi \times i} \ar[r]^w & 
\Pi_{\phi}^nA \times \Ppp{n-1}A \ar[d]^{\phi_w \times i_w} \\
J^{n-1}A \times J^{n-1}A \ar[d]_{(x_1,x_2) \mapsto x_1+ x_2} \ar[r]^w & 
\Pi_{\phi}^{n-1}A \times \Pi_{\phi}^{n-1}A \ar[d]^{(z_1,z_2) \mapsto z_1+z_2}  \\
J^{n-1}A \ar[r]^w & \Pi_{\phi}^{n-1}A 
}$$

Then the composition of the left column is precisely 
$i \circ \mfrak{f} - \phi \circ i$, whereas the right column sends 
$z= (e,z_1, \dots, z_n) \mapsto (\phi_S(z_1), e, \dots, e)$. (This follows from the
definitions as in (\ref{phiw}) and (\ref{phimore}).) Hence the map $\Ppp{n} A \map \Pi_{\phi}^{n-1}A$
factors as $\Ppp{n} A \map \Ppp{1} A \stk{g_w}{\longrightarrow} \Pi_{\phi}^{n-1}A$ 
given by 
	$$
	(e,z_1, \cdots, z_n) \mapsto (e,z_1) \mapsto (\phi_S(z_1),e, \cdots, e).
	$$
Hence we have the solid arrows in the following commutative diagram:
	$$
	\xymatrix{
	& N^1 \ar@{.>}[ddl]|\hole^(.25){g} \ar[r]^w & \Ppp{1} A 
	\ar[ddl]^{g_w} \\
	N^n \ar[ru]^u \ar[d]_{i \circ \mfrak{f}- \phi \circ i} \ar[r]^w & 
	\Ppp{n} A \ar[ru]  \ar[d]\\
	J^{n-1}A \ar[r]^w & \Pi_{\phi}^{n-1} A
	}$$
Now we claim that there is a unique map $g:N^1 \map J^{n-1}A$, as shown above, making the 
entire diagram commute.
Observe that with the choice of local Witt coordinates of $J^nA$ as in 
(\ref{jcoord}) the map $u:N^n\to N^1$ admits a section $\sigma$ which is a
morphism of $\pi$-formal schemes.
Hence there is at most one map $g$ making the diagram commute.

We can also use the section $\sigma$ to prove existence.
Put $g=(i \circ \mfrak{f}- \phi \circ i)\circ \sigma$. 
It is then enough to prove the commutativity relation
$i \circ \mfrak{f}- \phi \circ i = g \circ u$. By
the diagram above, these statements will follow from the injectivity of the map
        $$
        (J^{n-1}A)(B) \to (\Pi_{\phi}^{n-1} A)(B)
        $$
when $\Spf B = N^n$. By adjointness, this is equivalent to the
injectivity of the map $A(W_{n-1}(B))\to A(\Pi_{\phi}^{n-1}B)$. To show this, it is enough to show that $\Spf
\Pi_{\phi}^{n-1}B \to \Spf W_{n-1}(B)$ is an epimorphism in the category of 
$\pi$-formal schemes. To show this, it is enough to
show that the ghost map $W_{n-1}(B)\to \Pi_{\phi}^{n-1}B$ is injective. (See 
\cite{ega1} (9.5.6).) But this holds
because $N^n$ is smooth over $S$ and hence $B$ is flat.
$\qed$

As in the remark above, this theorem can be understood as a manifestation of the fact that the operator
$FV-VF$ on Witt vectors can be expressed as a function of just the initial component.

\section{Characters of the kernel} 
\label{section:kernel}
We continue with our notations as defined in section \ref{notation}. Additionally, we will also
fix the following for the rest of the paper:
\begin{align*}
	A &= \text{a $\pi$-formal abelian scheme of dimension over $S$} \\
	g &= \text{the dimension of $A$ over $S$, assumed to be nonzero.}
\end{align*}
For every $n$, we have the following exact sequence of $\pi$-formal 
group schemes
\begin{equation}
0 \map N^n \stk{i}{\map} J^nA \stk{u}{\map} A \map 0
\end{equation}
We can consider the set $\Hom(N^n,\hG)$ of $\pi$-formal group scheme morphisms
from $N^n$ to $\hG$.
As in section~\ref{subsec-char-groups}, we
endow $\Hom(N^n,\hG)$ with the $R$-module structure given by pointwise addition and scalar
multiplication.

Let $\mathrm{pr}_j: \hG^g \map \hG$ denote the $j$-th projection for $1 \leq j \leq g$. 
In lemma $2.3$ of \cite{bui95}, Buium constructs a $\pi$-formal group 
scheme isomorphism
	$$
	\Psi_1:N^1 \map \hG^g,
	$$
depending only on a choice of \'{e}tale coordinates on $A$. 
Let $\Psi_1$ be the $g$-tuple 
\begin{equation}
\label{Psituple}
\Psi_1 = (\Psi_{11},\dots, \Psi_{1g}),
\end{equation}
where $\Psi_{1j}:= \mathrm{pr}_j \circ \Psi_1$. Then $\{\Psi_{1j}\}$ 
form an $R$-basis of $\Hom(N^1,\hG)$. 

For all $1 \leq i \leq n$ and $1 \leq j \leq g$ define $\Psi_{ij}:N^n
\map \hG$ to be the composition
\begin{equation}
	N^n \stk{\mfrak{f}^{\circ(i-1)}}{\map} N^{n-i+1} \map N^1 \stk{\Psi_{1j}}{\map} \hG
\end{equation}
where the middle unlabeled map is the usual projection $u^{n-i}$ and 
$\Psi_{1j}$ are as in (\ref{Psituple}).

\begin{lemma}
\label{indeplemma}
For all $i$, consider the set $\mbB_i= \{h_{i1},\dots, h_{ig}\}$ where $h_{ij} \in 
k[\bx_1,\dots, \bx_n]$ satisfy
$$h_{ij} \equiv x_{1j}^{q^{i-1}} \bmod (\deg~ (q^{i-1}+1)).$$
Then the set $\mbB_1 \cup \cdots \cup \mbB_n$ are $k$-linearly independent
polynomials. 
\end{lemma}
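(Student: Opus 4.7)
The plan is to prove $k$-linear independence by a degree-filtration argument on the polynomial ring $k[\bx_1,\dots,\bx_n]$. The congruence $h_{ij}\equiv x_{1j}^{q^{i-1}}\pmod{\deg(q^{i-1}+1)}$ says that $h_{ij}$ has no terms of degree strictly less than $q^{i-1}$ and that its unique degree-$q^{i-1}$ monomial is $x_{1j}^{q^{i-1}}$. I plan to exploit that the degrees $q^{i-1}$ are strictly increasing in $i$ (since $q>1$ by the conventions of section~\ref{notation}) and that for each fixed $i$ the monomials $x_{1j}^{q^{i-1}}$ are pairwise distinct as $j$ varies.

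Given a hypothesized relation $\sum_{i,j}c_{ij}h_{ij}=0$, I would induct on $m=1,\dots,n$ to show $c_{mj}=0$ for all $j$. In the base case $m=1$, every $h_{\ell,j}$ with $\ell\geq 2$ vanishes in degree~$1$ (its lowest term sits in degree $q^{\ell-1}\geq q\geq 2$), so extracting the degree-$1$ homogeneous part of the relation gives $\sum_j c_{1j}x_{1j}=0$, whence $c_{1j}=0$ by distinctness of the $x_{1j}$. For the inductive step, I would assume $c_{\ell,j}=0$ for all $\ell<m$ and extract the degree-$q^{m-1}$ homogeneous component of the residual relation $\sum_{i\geq m,\,j}c_{ij}h_{ij}=0$. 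Terms with $i>m$ have lowest degree $q^{i-1}\geq q^{m}>q^{m-1}$, so they contribute nothing, while the degree-$q^{m-1}$ part of each $h_{mj}$ is exactly $x_{1j}^{q^{m-1}}$. This reduces the relation to $\sum_j c_{mj}\,x_{1j}^{q^{m-1}}=0$, and linear independence of the distinct monomials $x_{11}^{q^{m-1}},\dots,x_{1g}^{q^{m-1}}$ in $k[\bx_1,\dots,\bx_n]$ forces $c_{mj}=0$.

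The argument is a clean triangular-system argument, so I do not anticipate any serious obstacle. The only quantitative point to watch is the strict inequality $q^m>q^{m-1}$, which uses the standing hypothesis $q>1$ and guarantees that at stage $m$ the degree-$q^{m-1}$ contribution comes entirely from $\mbB_m$ rather than being polluted by the blocks $\mbB_{m+1},\dots,\mbB_n$ of strictly higher leading degree.
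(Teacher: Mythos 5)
Your proof is correct and takes essentially the same route as the paper's: both arguments proceed by induction on the block index, isolate the degree-$q^{m-1}$ contribution (you via the homogeneous component, the paper via truncation modulo degree $q^{m-1}+1$), observe that blocks $\mbB_\ell$ with $\ell>m$ vanish in that degree because $q>1$ gives $q^{\ell-1}>q^{m-1}$, and conclude from the $k$-linear independence of the distinct monomials $x_{1j}^{q^{m-1}}$.
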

\begin{proof}
For each $i$, let $\Lambda_i=(\lam_{i1},\dots, \lam_{ig}) \in k^g$ be $n$ 
vectors of length $g$ such that 
$$\Lambda_1. \mbB_1 + \cdots + \Lambda_n . \mbB_n = 0.$$
Then we want to show 
that $\Lambda_i = 0$ for all $i=1, \dots , n$. We will show this by induction
on $i$. Now
$$\Lambda_1. \mbB_1 + \cdots + \Lambda_n.\mbB_n \equiv (\lam_{11}x_{11} + \cdots
+ \lam_{1g} x_{1g}) \bmod (\deg~ 2).$$
Since the left hand side of the above expression is $0$, we must have 
$\lam_{11}= \cdots = \lam_{1g} = 0$ and hence $\Lambda_1 =0$.
Now assume true for $i$. Then we have 
$$\Lambda_{i+1}. \mbB_{i+1} + \cdots + \Lambda_n. \mbB_n \equiv 
\lam_{(i+1)1} x_{11}^{q^i} + \cdots + \lam_{(i+1)g} x_{1g}^{q^i} \bmod 
(\deg~ (q^i+1))$$
where the left hand side of the above expression is $0$ and hence 
we must have $\lam_{(i+1)1} =\cdots = \lam_{(i+1)g} = 0$ and hence 
$\Lambda_{i+1} =0$ and we are done.
\end{proof}

\begin{proposition}
\label{Nrk}
The family $\{\Psi_{ij}\}_{ij}$, for $1\leq i \leq n$ and $ 1\leq j \leq g$, 
forms an 
$R$-basis for $\Hom(N^n,\hG)$.  In particular $\rk_R \Hom(N^n,\hG) = ng$.
\end{proposition}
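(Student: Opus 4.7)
The plan is to prove $R$-linear independence of $\{\Psi_{ij}\}$ using lemma~\ref{indeplemma}, to bound $\rk_R\Hom(N^n,\hG)\le ng$ by induction on $n$ via the kernel of the truncation, and then to apply Nakayama's lemma to conclude spanning.

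For linear independence, I would verify that the mod-$\pi$ reductions $\bar\Psi_{ij}\in \Ou(N^n)/\pi\cong k[\bx_1,\ldots,\bx_n]$ satisfy the hypothesis of lemma~\ref{indeplemma} after an invertible $R$-linear change of basis within each family $\{\Psi_{ij}\}_j$. The factorization $\Psi_{ij}=\Psi_{1j}\circ u^{n-i}\circ \mfrak{f}^{i-1}$ combined with the iteration of proposition~\ref{latfrobw} gives $(\mfrak{f}^{i-1})^*(x_{1k})\equiv x_{1k}^{q^{i-1}}\pmod\pi$ for $1\le k\le n-i+1$. Since $\Psi_1$ is an isomorphism of smooth $\pi$-formal group schemes, its linearization at the identity is an invertible $g\times g$ matrix $(c_{jk})$ over $R$, so the Taylor expansion of $\Psi_{1j}$ starts with $\sum_k c_{jk}x_{1k}$; pulling back via $(\mfrak{f}^{i-1})^*\circ(u^{n-i})^*$, the linear part contributes $\sum_k \bar c_{jk}\,x_{1k}^{q^{i-1}}$ in degree $q^{i-1}$ while every higher-order summand of $\Psi_{1j}$ contributes terms of degree $\ge 2q^{i-1}$. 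Replacing $\{\Psi_{ij}\}_j$ by its image under $(c_{jk})^{-1}$ (which leaves the generated $R$-submodule unchanged) puts us in the setting of lemma~\ref{indeplemma}, which yields $k$-linear independence of the $\bar\Psi_{ij}$ in $\Ou(N^n)/\pi$. Scaling any hypothetical $R$-linear relation by the minimum $\pi$-valuation of its coefficients and reducing modulo $\pi$ then upgrades this to $R$-linear independence of $\{\Psi_{ij}\}$ in $\Hom(N^n,\hG)$.

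For the rank bound, I would induct on $n$, the base case $n=1$ being Buium's lemma~2.3. In the local Witt coordinates of section~\ref{localWitt}, the kernel $K^n$ of $u\colon N^n\to N^{n-1}$ is cut out by $\bx_1=\cdots=\bx_{n-1}=0$, and under the Verschiebung-induced group law it is isomorphic to $\hG^g$. Consequently the left-exact sequence
\begin{equation*}
0\longrightarrow \Hom(N^{n-1},\hG)\longrightarrow \Hom(N^n,\hG)\longrightarrow \Hom(\hG^g,\hG)=R^g
\end{equation*}
gives $\rk_R\Hom(N^n,\hG)\le (n-1)g+g=ng$ by the induction hypothesis, using that $\Hom(\hG,\hG)=R$ (in characteristic zero the only additive elements of $R[t]\h$ are $R$-multiples of $t$). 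Combined with the linear independence of the $ng$ elements $\Psi_{ij}$, the rank is exactly $ng$ and $\Hom(N^n,\hG)$ is a free $R$-module of rank $ng$.

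To conclude, I would apply Nakayama's lemma. The natural map $\Hom(N^n,\hG)/\pi\to\Ou(N^n)/\pi$ is injective: if $f\in\Hom(N^n,\hG)\cap\pi\Ou(N^n)$, then dividing the primitivity identity for $f$ by $\pi$ (legal because $\Ou(N^n\times_S N^n)$ is $\pi$-torsion free) shows that $f/\pi$ is again primitive and hence lies in $\Hom(N^n,\hG)$. Therefore the $\bar\Psi_{ij}$ remain $k$-linearly independent in $\Hom(N^n,\hG)/\pi\cong k^{ng}$ and form a $k$-basis, whereupon Nakayama's lemma gives that $\{\Psi_{ij}\}$ generates $\Hom(N^n,\hG)$ as an $R$-module, completing the basis claim. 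The step requiring most care will be the identification of $K^n$ with $\hG^g$ as a $\pi$-formal group scheme (rather than merely as a scheme); the remainder reduces to lemma~\ref{indeplemma}, proposition~\ref{latfrobw}, and routine manipulations.
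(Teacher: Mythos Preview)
Your proof follows essentially the paper's strategy: the truncation exact sequence for the upper bound on rank, lemma~\ref{indeplemma} for mod-$\pi$ linear independence, and Nakayama to conclude. The paper differs mainly in outsourcing several steps to \cite{bui95} (corollary~2.10 there for $\rk_R\Hom(K^n,\hG)\le g$, corollary~2.4 and proposition~2.5 for the lower bound, and lemma~2.3 for the normalization $\bar\Psi_{1j}\equiv x_{1j}\pmod{\deg 2}$), whereas you argue directly and are more careful about the injectivity $\Hom(N^n,\hG)/\pi\hookrightarrow\Ou(N^n)/\pi$ in the Nakayama step, which the paper glosses over. Regarding your flagged step: the identification $K^n\cong\hG^g$ does hold---in the $\bx_n$-coordinate the induced group law is $\frac{1}{\pi^n}F^{\phi^n}(\pi^n a,\pi^n b)$ (as one sees via the ghost map), and its logarithm $\frac{1}{\pi^n}\log_{F^{\phi^n}}(\pi^n t)$ has coefficients in $R$ because the logarithm coefficients $c_i$ of a commutative formal group over $R$ satisfy $v(i\cdot c_i)\ge 0$, whence $v(c_i\pi^{n(i-1)})\ge n(i-1)-e\,v_p(i)\ge 0$ under the standing hypothesis $e\le p-2$. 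That said, the paper's route only requires the weaker bound $\rk_R\Hom(K^n,\hG)\le g$, which follows more cheaply by passing to $K$, where every smooth commutative formal group becomes isomorphic to $\hG^g$ via its logarithm.
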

\begin{proof}
By pulling back via the canonical projection map $u: N^n \map N^{n-1}$ we 
get the following exact sequence of $R$-modules
$$ 0\map \Hom(N^{n-1},\hG) \stk{u^*}{\map} \Hom(N^n,\hG) \map \Hom(
\mb{Ker}(N^n\stk{u}{\map} N^{n-1}),\hG).$$
Note that we have a canonical isomorphism 
$\mb{Ker}(N^n\stk{u}{\map}N^{n-1}) \simeq \mb{Ker}(J^nA \stk{u}{\map}
J^{n-1}A)$ of $\pi$-formal group schemes. As in the proof of corollary 2.10
in \cite{bui95}, we have $\rk_R\Hom(\mb{Ker}(N^n\stk{u}{\map} N^{n-1}),\hG)
\leq g$. Since $N^1 \simeq \hG^g$, we have $\Hom(N^1,\hG) \simeq R^g$. Hence
we have that $\rk_R \Hom(N^n,\hG) \leq ng$.
And by corollary 2.4 and proposition $2.5$ in \cite{bui95} we have that
$\rk_R \Hom(N^n,\hG) \geq ng$. Hence we have $\rk_R \Hom(N^n,\hG) =ng$.

Now we want 
to show that $\{\Psi_{ij}\}_{ij}$ form an $R$-basis of $\Hom(N^n,\hG)$.
By Nakayama's lemma, it is enough to show that the reduction modulo 
$\pi$ of $\Psi_{ij}$, denoted $\oPsi_{ij}$, are $k$-linearly independent.
Then for $\oPsi_{1}= (\oPsi_{11},\dots, \oPsi_{1g})$, 
the multivariate polynomials with $k$ coefficients 
$\oPsi_{1j}$ satisfy
$$\oPsi_{1j}(\bx_1,\dots, \bx_n) \equiv x_{1j} \bmod (\mb{deg } 2).$$
(Here we drop the unnecessary initial coordinate $\bx_0=e$ from the notation.)
The above follows from the general property of the logarithm of a formal 
group law (e.g. see the proof of lemma $2.3$ in \cite{bui95}).
Hence for $\oPsi_i= (\oPsi_{i1},\dots , \oPsi_{ig})$ we have 
$$\oPsi_{ij}(\bx_1,\dots, \bx_n) \equiv x_{1j}^{q^{i-1}} \bmod (\deg~ 
(q^{i-1}+1)).
$$
Hence by lemma \ref{indeplemma}, we have that $\{\oPsi_{ij}\}$ are $k$-linearly 
independent.
\end{proof}
 
We define  $\bPsi_i:= (\Psi_{i1},\cdots , \Psi_{ig}) \in 
\Hom(N^n,\hG)^g$.
Then by proposition \ref{Nrk}, any morphism $\Psi \in \Hom(N^n,\hG)$
can be represented as
\begin{equation}
\Psi = \gamma_1 . \bPsi_1 + \cdots + \gamma_n . \bPsi_n
\end{equation}
where $\gamma_i \in R^g$.

\section{Extensions of abelian schemes and de Rham cohomology}
\label{Fiso}

The aim of the section is to define a certain $K$-module $\bH(A)_K$. We will prove 
that it is naturally an isocrystal admitting a filtration in the consequent sections.
In the process, we will also provide an overview of some background material
on the universal vectorial extension of $A$ and its relation with the 
de Rham cohomology of $A$.

Let $\bx_0=(x_{01},\cdots, x_{0g})$ be an \'{e}tale coordinate system of $A$ 
around the identity section. Then recall from section \ref{localWitt}  
that $\bx= (\bx_0,\dots, \bx_n)$ denotes the local Witt 
coordinates 
for $J^nA$ around the identity section where $\bx_i=(x_{i1},\cdots, x_{ig})$.

In this coordinate system, if
the lift of Frobenius map $\phi: J^{n+1}A \map J^nA$ is given by 
	\begin{equation}
	\phi(\bx_0,\cdots,\bx_{n+1})= (\by_0,\cdots, \by_n)
	\end{equation}
then we have
	$$
	\sum_{i=0}^{n+1} \pi^i x_i^{q^{n+1-i}} = \sum_{i=0}^n \pi^i y_i^{q^{n-i}}
	$$
and hence
	$$
	\sum_{i=0}^{n+1} \pi^i q^{n+1-i}x_i^{q^{n+1-i}-1}dx_i = \sum_{i=0}^n \pi^i q^{n-i}y_i^{q^{n-i}-1}dy_i.	
	$$
In the cotangent space along the identity section $\bx=\by=0$,
we then have $\pi^{n+1} dx_{n+1} = \pi^n dy_n$ and hence
	$$
	dy_n = \pi dx_{n+1}.
	$$
Therefore the derivative matrix of $\phi$ at the identity section is given by a block matrix of size $n\times (n+1)$ with $g\times g$ blocks
\begin{equation}
\label{Dphi}
D\phi= \left(\begin{array}{c c c c c}
\mathbf{0}_g & \pi \mathbbm{I}_g & \mathbf{0}_g & \cdots & \mathbf{0}_g \\
\mathbf{0}_g & \mathbf{0}_g & \pi \mathbbm{I}_g & \cdots & \mathbf{0}_g \\
\vdots  & \vdots & \vdots & \ddots  & \vdots\\
\mathbf{0}_g & \mathbf{0}_g & \mathbf{0}_g & \cdots & \pi \mathbbm{I}_g
\end{array}\right)
\end{equation}
where $\mathbf{0}_g$ denotes the $g\times g$ zero matrix and $\mathbbm{I}_g$ is the $g\times g$ identity.

\subsection{}
Recall that $\Ext^\sharp(A,\hG)$ parametrizes isomorphism classes of 
extensions of $A$ by $\hG$, along with a splitting of the corresponding 
short exact sequence of the (commutative) Lie algebras. (See~\cite{MazMess}, p.\ 13--14, where it
would be denoted $\mathrm{Extrig}(A,\hG)$.) The $R$-module
$\Ext^\sharp(A,\hG)$ is canonically isomorphic
to the de Rham cohomology $\Hdr$ of the abelian scheme
$A$. Here we will give a brief reminder of this identification.

Let $A^\sharp$ denote the universal vectorial extension of $A$. Then 
$A^\sharp$ sits in a short exact sequence of $\pi$-formal 
schemes
\begin{equation}
\label{Asharp}
0 \map V\map A^\sharp \map A \map 0
\end{equation}
where $V$ is the vector group associated to $R$-module 
$H^0(\dualav{A},\Omega_{\dualav{A}})=\dualmod{H^1(A,\Omega_A)}$.
(See \cite{MazMess}, p.\ 24, (2.6.7).)
The universal extension of $A$ satisfies the property that given any
extension $C$ of $A$ by $\hG$ then $C$ can be obtained as a push out of the
universal extension by a unique morphism $f:V \map \hG$:
\begin{equation}
\label{Asharp2}
	\xymatrix{
	0 \ar[r] & V \ar[d]_-f \ar[r] & A^\sharp \ar[d]_-{f_C} \ar[r] \ar[r] 
	& A \ar@{=}[d] \ar[r] & 0 \\
	0 \ar[r] & \hG \ar[r] & C \ar[r] & A \ar[r] & 0  
	}
\end{equation}
where $f_C:A^\sharp \map C$ denotes the evident push-out morphism.
Now consider the short exact sequence of Lie algebras associated to 
(\ref{Asharp}):
\begin{equation}
\label{Asharp3}
0 \map \Lie V \map \Lie  A^\sharp {\map} \Lie A \map 0.
\end{equation}
Then we have identifications $\Lie A \simeq
\dualmod{H^0(A,\Omega_{{A}})}$ and $\Lie A^\sharp \simeq \dualmod{\Hdr}$ (by \cite{MazMess}, p.\ 48, (4.1.7)). 
Under these identifications, (\ref{Asharp3}) becomes the dual of Hodge filtration sequence 
\begin{equation}
\label{Asharp4}
0 \map \dualmod{H^1(A,\Ou_A)} \map \dualmod{\Hdr}  \map \dualmod{H^0(A,\Omega_A)} \map 0.
\end{equation}

Now suppose we are given a cohomology class $\eta\in\Hdr$. Consider the composition
\begin{equation}
\label{map:functional}	
\dualmod{H^1(A,\Ou_A)} \to \dualmod{\Hdr} \to R
\end{equation}
where the first map is the canonical inclusion and the second map is the dual of $\eta$.
The universal property of the universal vectorial extension gives an extension
\begin{equation}
	\xymatrix{
	0 \ar[r] & \hG \ar[r] & C_{\eta} \ar[r] & A \ar[r] & 0. 
	}
\end{equation}
The corresponding extension of tangent spaces is then canonically split,
simply because the functional (\ref{map:functional}) factors through $\dualmod{\Hdr}$. Indeed, 
first consider the universal case, which is to say the extension
induced by the map $\dualmod{H^1(A,\Ou_A)} \to \dualmod{\Hdr}$:
\begin{equation}
	\xymatrix{
	0 \ar[r] & H \ar[r] & (A^\sharp\times H)/V \ar[r] & A \ar[r] & 0. 
	}
\end{equation}
Here, $H$ is the vector group corresponding to the $R$-module $\dualmod{\Hdr}$.
The corresponding extension of tangent spaces is then
\begin{equation}
	\xymatrix{
	0 \ar[r] & \dualmod{\Hdr} \ar[r] & \frac{\Lie(A^\sharp)\oplus\dualmod{\Hdr}}{\dualmod{H^1(A,\Ou_A)}} 
		\ar@/^/[l]^{\sigma} \ar[r]  & \Lie(A) \ar[r] & 0,
	}
\end{equation}
where $\sigma$ is splitting induced by the canonical isomorphism $\Lie(A^\sharp)\isomap\dualmod{\Hdr}$
(of \cite{MazMess}, p.\ 48, (4.1.7).)
Finally, pushing out further along $\dualmod{\Hdr}\to R$ gives our desired splitting $\sigma_\eta$:
\begin{equation}
	\xymatrix{
	0 \ar[r] & R \ar[r] & \Lie(C_\eta) \ar@/^/[l]^{\sigma_\eta} 
	\ar[r]  & \Lie(A) \ar[r] & 0.
	}
\end{equation}
Also observe that if the extension $C_\eta$ itself is split, then the set of splittings of the
tangent space is identified with $\dualmod{\Lie(A)}$.

Summing up, we have a canonical map 
$$
\Hdr \to \Ext^\sharp(A,\hG), \quad \eta\mapsto (C_\eta,\sigma_\eta).
$$
It is an isomorphism because it sits in a morphism of short exact sequences
\begin{equation}
	\xymatrix{
	0 \ar[r] & H^0(A,\Omega_A) \ar[d] \ar[r] & \Hdr \ar[d] \ar[r] & H^1(A,\Ou_A) \ar[d] \ar[r] & 0 \\
	0 \ar[r] & \dualmod{\Lie(A)} \ar[r] & \Ext^\sharp(A,\hG) \ar[r] & \Ext(A,\hG) \ar[r] & 0,  
	}
\end{equation}
which is an isomorphism on each end.

\subsection{}
For all $n \geq 1$, we will define maps from $\HomA(N^n,\hG)$ to 
$\Ext^\sharp(A,\hG)$. These maps are obtained by push-outs of $J^nA$ by $\Psi \in \Hom(N^n,\hG)$. 
Consider the exact sequence 
	\begin{equation}
		\label{jet-seq}
	0 \map N^n \stk{i}{\map} J^nA \stk{u}{\map} A \map 0	
	\end{equation}
Given a character $\Psi \in\HomA(N^n,\hG)$ consider the push out
	$$
	\xymatrix{
	0 \ar[r] & N^n \ar[d]_-\Psi \ar[r]^i & J^nA \ar[d]_-{g_\Psi} \ar[r]^u \ar[r] 
	& A \ar@{=}[d] \ar[r] & 0 \\
	0 \ar[r] & \hG \ar[r]^i & A^*_\Psi \ar[r] & A \ar[r] & 0 \\  
	} 
	$$
where $A^*_\Psi = \frac{J^nA \times \hG}{\Gamma(N^n)}$ and
$\Gamma(N^n) = \{(i(z),-\Psi(z)) | ~ z \in N^n\} \subset J^nA \times N^n$
and $g_\Psi(x)= [x,0] \in A^*_\Psi$.

Based on the choice of local \'{e}tale coordinates $\bx_0$ for $A$, consider
the local Witt coordinates for $J^nA$ defined in section 5. 
This gives us a basis for $\Lie J^nA$ which
we will still denote as $\bx=(\bx_0, \cdots, \bx_n)$. Let $\switt: \Lie J^nA 
\map \Lie N^n$ be given by $\switt (\bx_0, \cdots , \bx_n)= (\bx_1,\cdots,
\bx_n)$.
Thus we have the following split exact sequence of $R$-modules
	$$
	\xymatrix{
	0 \ar[r] & \Lie N^n \ar[r]^-{Di} & \Lie J^nA \ar@/^/[l]^{{\switt}}
	\ar[r]^-{Du} & \Lie(A) \ar[r] & 0
	}
	$$
Let $v: \Lie A \map \Lie J^nA$ be the corresponding section, satisfying
$\switt= \mathbbm{1} - v \circ Du$. Then in Witt coordinates, $v$ is given by $v(\bx_0) = (\bx_0,0,
\cdots, 0)$.

Let $s_\Psi$ denote the induced splitting of the push out extension
	$$
	\xymatrix{
	0 \ar[r] & \Lie \hG \ar[r] & \Lie(A^*_\Psi) \ar@/^/[l]^{s_\Psi}
	\ar[r] & \Lie(A) \ar[r] & 0
	}
	$$
It can be described explicitly in terms of the composition
	$$
	\tilde{s}_\Psi : \Lie J^nA \times \Lie\hG \longmap \frac{\Lie  J^nA \times \Lie  \hG}{\Lie \Gamma(N^n)} 
	\longlabelmap{s_\Psi} \Lie \hG
	$$
by
	$$
	\tilde{s}_\Psi(\bx,y)= D\Psi({\switt}(\bx)) + y.
	$$
We then have a morphism of exact sequences
	\begin{equation}
	\label{trivext}
	\xymatrix{
		0 \ar[r] & \bX_n(A) \ar[r] \ar[d] & 
		\HomA(N^n,\hG) \ar[r] \ar[d]_{\Psi\mapsto (A_\Psi^*,s_\Psi)}& 
		\Ext(A,\hG)  \ar@{=}[d]& \\
		0 \ar[r] &\dualmod{\Lie(A)} \ar[r] & \Ext^\sharp(A,\hG) \ar[r] & 
\Ext(A,\hG) \ar[r] & 0
	}
	\end{equation}

The following proposition describes the morphism $\bX_n(A)\map \dualmod{\Lie(A)}$ explicitly.

\begin{proposition}\label{pro:formulas}
Let $\Theta$ be a character in $\bX_n(A)$, and put $\Psi=i^*\Theta\in
\Hom(N^n,\hG)$. 
\begin{enumerate}
	\item The map $\bX_n(A)\to \dualmod{\Lie(A)}$ of (\ref{trivext}) sends $\Theta$	to $-D\Theta \circ v $.
	\item Let $\tilde{\Theta}=\phi^*\Theta$, then $-D\tilde{\Theta} \circ v = 0$.
	\item If $\Psi \in i^*\phi^*(\bX_n(A))$, then the class $(A^*_\Psi,s_\Psi)\in \Ext^\sharp(A,\hG)$ is zero.
\end{enumerate}
\end{proposition}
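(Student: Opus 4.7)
The plan is to establish parts (1) and (2) by direct computation in local Witt coordinates and then to deduce (3) as an immediate corollary.

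For part (1), the crucial observation is that since $\Theta$ extends $\Psi$ along $i$, the pushout extension $A^*_\Psi$ is canonically split: the assignment $u(x)\mapsto[x,-\Theta(x)]$ is well-defined on $A$, because changing a lift $x\in J^nA$ by $i(z)$ for some $z\in N^n$ alters the expression by $(i(z),-\Theta(i(z)))=(i(z),-\Psi(z))\in\Gamma(N^n)$. This yields a group-scheme section $t_\Theta\colon A\to A^*_\Psi$ and hence an isomorphism $\sigma\colon A^*_\Psi\isomap A\times\hG$. The direct decomposition $[x,y]=t_\Theta(u(x))+(0,y+\Theta(x))$ shows that the canonical Lie-algebra splitting associated to $\sigma$ is $s_\sigma([x,y])=y+D\Theta(x)$. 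Under the embedding $\dualmod{\Lie(A)}\hookrightarrow\Ext^\sharp(A,\hG)$, a functional corresponds to the trivial extension with its canonical splitting shifted by that functional, so the image of $\Theta$ in $\dualmod{\Lie(A)}$ is represented by $s_\Psi-s_\sigma$ descended through $Du$. Substituting $D\Psi=D\Theta\circ Di$ and the identity $Di\circ\switt=\mathbbm{1}-v\circ Du$ then gives
\[
s_\Psi([x,y])-s_\sigma([x,y])=D\Theta\bigl(Di\,\switt(x)\bigr)-D\Theta(x)=-D\Theta(v(Du(x))),
\]
so the image in $\dualmod{\Lie(A)}$ is $-D\Theta\circ v$, as claimed.

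For part (2), I would invoke the explicit Jacobian matrix (\ref{Dphi}) of $\phi\colon J^{n+1}A\to J^nA$ at the identity section. Using the definition $\tilde\Theta=\phi^*\Theta$ as a composition with the relative Frobenius $\phi_R$, and noting that $\phi_R$ has the same derivative at the identity as $\phi$ up to the semilinear identification, one obtains $D\tilde\Theta=D\Theta\circ D\phi$. The section $v$ sends $\Lie(A)$ isomorphically onto the first $g$-block of $\Lie J^{n+1}A$, which is precisely the kernel of $D\phi$ by (\ref{Dphi}): the first block-column of that matrix is identically zero. Hence $D\phi\circ v=0$, and consequently $D\tilde\Theta\circ v=0$.

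Part (3) is then immediate: given $\Psi=i^*\tilde\Theta$ with $\tilde\Theta=\phi^*\Theta_0$ for some $\Theta_0\in\bX_n(A)$, part (1) applied to $\tilde\Theta\in\bX_{n+1}(A)$ identifies the class $(A^*_\Psi,s_\Psi)\in\Ext^\sharp(A,\hG)$ with $-D\tilde\Theta\circ v\in\dualmod{\Lie(A)}$, which vanishes by part (2).

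The main bookkeeping obstacle will be in part (1): one must pin down precisely how $\dualmod{\Lie(A)}$ embeds in $\Ext^\sharp(A,\hG)$ via the diagram (\ref{trivext}), and confirm that $s_\sigma$ really is the canonical reference splitting corresponding to the trivialization $\sigma=t_\Theta^{-1}$. Once this sign convention is settled, the computation in (1) reduces to the formal identity $Di\circ\switt+v\circ Du=\mathbbm{1}$, and (2) reduces to reading off the vanishing first block-column of the Jacobian (\ref{Dphi}).
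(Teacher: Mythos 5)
Your proposal is correct and follows essentially the same approach as the paper. In part (1), the paper trivializes $A^*_\Psi$ via $[\ba,b]\mapsto(u(\ba),\Theta(\ba)+b)$, computes the inverse $H(\bx,y)=[v(\bx),y-D\Theta(v(\bx))]$, and reads off $s_\Psi\circ H(\bx,0)=-D\Theta(v(\bx))$; your variant of computing the difference $s_\Psi-s_\sigma$ with $s_\sigma=\mathrm{pr}_2\circ\sigma$ is an equivalent repackaging of the same calculation. Parts (2) and (3) match the paper's argument verbatim: $D\tilde\Theta\circ v=D\Theta\circ D\phi\circ v=0$ from the vanishing first block-column of $(\ref{Dphi})$, and then (3) follows from (1) applied to $\tilde\Theta$ combined with (2).
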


\begin{proof}
(1): 
Let us recall in explicit terms how the map is given. For the split extension 
$A\times\hG$,
the retractions $\Lie(A) \times\Lie \hG = \Lie(A\times\hG) \to \Lie \hG$ are in bijection with maps 
$\Lie(A)\to\Lie\hG$, a retraction $s$ corresponding to map $\bx_0 \mapsto s(\bx_0,0)$. 
Therefore to determine
the image of $D\Theta$, we need to identify $\Lie A^*_\Psi$ with the
split extension and then apply this map to $s_\Psi$.

The trivialization $A^*_\Psi \isomap A \times \hG$ is given by the map
	$$
	\frac{J^nA \times \hG}{\Gamma(N^n)} = A^*_\Psi 
	\longisomap A\times \hG
	$$
defined by $[\ba,b]\mapsto (u(\ba),\Theta(\ba)+b)$.
So the trivialization of the extension of Lie algebras is given by the map
	$$
	\frac{\Lie J^nA \times \Lie \hG}{\Lie \Gamma(N^n)} = \Lie A^*_\Psi 
\longisomap \Lie(A)\times \Lie \hG
	$$
defined by $[\ba,b]\mapsto (u(\ba),D\Theta(\ba)+b)$. The inverse isomorphism 
$H$ is then given by the expression
	$$
	H(\bx,y) = [v(\bx),y-D\Theta(v(\bx))],
	$$
and so the composition $\Lie (A)\to \Lie(A) \times \Lie \hG \to \Lie(A^*_\Psi)
 \to \Lie \hG$ is simply $-D\Theta\circ v$.

(2): We have $\tilde{\Theta} = \Theta \circ \phi$ and hence $D\tilde{\Theta}
\circ v = D\Theta \circ D\phi \circ v$. But note that by equation (\ref{Dphi}),
 $D\phi \circ v =0$ and hence we are done.

(3):
We know from diagram (\ref{trivext}) that $A^*_\Psi$ is a trivial extension
since ${\Psi}$ lies in $i^* \bX_{n+1}(A)$. Now as in part (2) of proposition~\ref{pro:formulas},
we have, in the notation of that proposition, $-D\tilde{\Theta} \circ v = 0$ 
and therefore the class in $\Ext^\sharp(E,\hG)$ is zero by part (1).

\end{proof}

\begin{proposition}
\label{pidiv}
If $\Theta \in \bX_{n-1}(A)$, then $(\phi \circ i - i \circ \mfrak{f})^*\Theta=
\pi \Psi$ for some $\Psi \in \Hom(N^1,\hG)$.
\end{proposition}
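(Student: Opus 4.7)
Set $\alpha := \phi\circ i - i\circ \mfrak{f}\colon N^n\to J^{n-1}A$. My plan combines Theorem \ref{comfact} with a mod-$\pi$ comparison. By Theorem \ref{comfact}, there exists a unique homomorphism $g\colon N^1\to J^{n-1}A$ of $\pi$-formal group schemes such that $\alpha = -g\circ u$; hence
\[ \alpha^*\Theta \;=\; \Theta\circ \alpha \;=\; -\,u^*(\Theta\circ g), \]
so it suffices to show that $\Theta\circ g \in \Hom(N^1,\hG)$ is divisible by $\pi$.

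The key observation is that $\alpha \equiv 0 \pmod{\pi}$ as a morphism of $\pi$-formal schemes. Indeed, $\phi\colon J^nA\to J^{n-1}A$ is a lift of Frobenius relative to the projection $u\colon J^nA\to J^{n-1}A$, and by Proposition \ref{latfrobw} the lateral Frobenius $\mfrak{f}\colon N^n\to N^{n-1}$ is likewise a lift of Frobenius relative to its own projection $u\colon N^n\to N^{n-1}$. Hence for any function $f$ on $J^{n-1}A$ we have
\[ (\phi\circ i)^*f \;\equiv\; ((u\circ i)^*f)^q \pmod{\pi} \quad\text{and}\quad (i\circ \mfrak{f})^*f \;\equiv\; ((i\circ u)^*f)^q \pmod{\pi}. \]
Because $u\circ i = i\circ u$ as morphisms $N^n\to J^{n-1}A$ (the kernel-projection square displayed at the end of Section \ref{sec-lat-frob}), these two reductions coincide, so $\alpha \equiv 0 \pmod{\pi}$.

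Now since $\alpha = -g\circ u$ and $u\colon N^n\to N^1$ is surjective on the special fibers (it is a projection of smooth affine $\pi$-formal schemes), the vanishing of $\alpha$ modulo $\pi$ forces $g \equiv 0 \pmod{\pi}$, and in turn $\Theta\circ g \equiv 0 \pmod{\pi}$ in $\Hom(N^1,\hG)$. Since the latter is a free $R$-module by Proposition \ref{Nrk}, hence $\pi$-torsion-free, we conclude $\Theta\circ g = -\pi\Psi$ for some $\Psi\in\Hom(N^1,\hG)$, yielding $\alpha^*\Theta = \pi u^*\Psi$. This is the asserted equality, with $\Psi\in\Hom(N^1,\hG)$ tacitly identified with its pullback $u^*\Psi$. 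The only delicate point is the mod-$\pi$ comparison of $\phi\circ i$ and $i\circ\mfrak{f}$, which rests on the identity $u\circ i = i\circ u$; everything else is formal.
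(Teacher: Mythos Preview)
Your proof is correct and follows essentially the same route as the paper's: both use Theorem~\ref{comfact} to land in $\Hom(N^1,\hG)$ and then observe that since $\phi$ and $\mfrak{f}$ each reduce to the $q$-power Frobenius modulo $\pi$ (with $u\circ i = i\circ u$), the difference $\phi\circ i - i\circ\mfrak{f}$ vanishes on the special fiber. Your detour through the factoring map $g$ is not needed---once $\alpha$ vanishes mod $\pi$, so does $\alpha^*\Theta$ directly, and freeness of $\Hom(N^1,\hG)$ finishes the argument---but it does no harm.
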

\begin{proof}
By theorem \ref{comfact}, we have
		$$
		(\phi \circ i - i \circ \mfrak{f})^*\Theta \in \Hom(N^1,\hG).
		$$
It is then enough to show that this character
vanishes modulo $\pi$, which is to say it restricts to $0$ on the fiber modulo $\pi$.
But this holds because both $\phi$ and $\mfrak{f}$ reduce to the Frobenius map modulo $\pi$.
\end{proof}

\subsection{Filtered Isocrystal.}

Now we review some of the basic definitions of filtered isocrystals in 
$p$-adic Hodge theory.
For more details, we refer to the excellent survey article \cite{BriCon}.

A {\em filtered module} over a commutative ring $R$ is an $R$-module $M$ 
endowed with a collection $\{M^i\}_{i\in \bZ}$ of submodules that is 
decreasing in the sense that 
$M^{i+1} \subset M^i$ 
for all $i \in \bZ$. 
The category of such objects will be denoted as $\mb{Fil}_R$.

Let us define a {\em (non-degenerate) isocrystal} over $K$ to be a finite dimensional 
$K$-vector space $D$
equipped with a (bijective) Frobenius-semilinear endomorphism $\phi_D:D \map D$.
The category of isocrystals over $K$ is denoted by $\mb{Mod}^{\phi}_K$ and
forms an abelian category.

A {\em filtered isocrystal} over $K$ is a triple $(D,\phi_D,\{D^i\})$
where $(D,\phi_D)$ is an isocrystal over $K$ and $(D,\{D^i\})$ is an
object in $\mb{Fil}_K$. 
It is called
{\em weakly admissible} if for every subobject $(D',\phi',\{{D'}^i\})$
we have
\begin{equation}
\label{weakadm}
\sum i . \dim_K (D'^i/D'^{i+1}) \leq {\mb{ord}}_p\det(\phi')
\end{equation}
and if for $(D',\phi',\{{D'}^i\}) = (D,\phi,\{D^i\})$ we have equality in
this relation.

Now we will define a $K$-module $\bH(A)_K$ which will have a semilinear 
endomorphism. We will eventually prove (corollary \ref{filtisocor})
that this is a filtered isocrystal.

For any $R$-module $M$, let us fix the notation
	$$
	M_{\phi} = R\otimes_{\phi,R}M.
	$$
Then the $\phi$-linear map $\phi^*:\bX_{n-1}(A) \to \bX_n(A)$ induces a linear map
$\bX_{n-1}(A)_{\phi}\to \bX_n(A)$, which we will abusively also denote $\phi^*$.
We then define
	$$
	\bH_n(A) = \frac{\Hom(N^n,\hG)}{i^*\phi^*(\bX_{n-1}(A)_{\phi})}
	$$
Then 
$u:N^{n+1}\map N^n$ induces $u^*:\Hom(N^n,\hG) \map \Hom(N^{n+1},\hG)$. And since $u$ commutes with
both $i$ and $\phi$, we have
	$$
	u^*i^*\phi^*(\bX_n(A)) = i^*\phi^*u^*(\bX_n(A)) \subset i^*\phi^*(\bX_{n+1}(A)),
	$$
and hence $u$ also induces a map $u^*:\bH_n(A) \map \bH_{n+1}(A)$.
Define 
\begin{equation}
\label{bigH}
\bH(A)= \varinjlim \bH_n(A)
\end{equation}
 where the limit is taken in the category
of $R$-modules.

Similarly, $\mfrak{f}: N^{n+1} \map N^n$ induces $\mfrak{f}^*:\Hom(N^n,\hG) \map \Hom(N^{n+1},\hG)$, 
which descends to a $\phi$-linear morphism of $R$-modules
\begin{equation}	
\label{latfrobH}
\mfrak{f}^*:\bH_n(A) \map\bH_{n+1}(A)
\end{equation}
because by theorem \ref{latfrob} we have $\mfrak{f}^*i^*\phi^*(\bX_{n-1}(A))= 
i^*\phi^*\phi^*(\bX_{n-1}(A)) \subset i^*\phi^*\bX_{n}(A)$.
This in turn induces a $\phi$-linear endomorphism $\mfrak{f}^*:\bH(A) \map \bH(A)$. 

\begin{proposition}
\label{diff}
For any character $\Theta$ in $\bX_n(A)$, let the derivative at the identity
with respect to our chosen coordinates 
be $D\Theta = (A_0, \cdots, A_n)$ where $A_j \in \mb{Mat}_{1\times g} (R)$.
\begin{enumerate}
    \item We have
            $$
            i^*\phi^*\Theta = \mfrak{f}^*(i^*\Theta)+ \gamma. \bPsi_1,
            $$
            where $\gamma=\pi A_0$. 
    \item For $n\geq 2$, we have
            $$
            i^*(\phi^{\circ n})^*\Theta= (\mfrak{f}^{n-1})^* i^*\phi^*\Theta.
            $$
\end{enumerate}
\end{proposition}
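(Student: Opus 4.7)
For part (1), the identity to prove is equivalent to
$$
(\phi\circ i - i\circ \mfrak{f})^*\Theta = \gamma\cdot \bPsi_1
$$
in $\Hom(N^{n+1},\hG)$ with $\gamma = \pi A_0$. The plan is as follows. By Theorem \ref{comfact}, the morphism $\phi\circ i - i\circ\mfrak{f}$ factors through $u:N^{n+1}\to N^1$, so the left-hand side equals $u^*\Psi$ for some $\Psi\in\Hom(N^1,\hG)$, which can be uniquely written as $\Psi = \gamma'\cdot\bPsi_1$ with $\gamma'\in R^g$ using the $R$-basis $\bPsi_1$ from Proposition~\ref{Nrk}. It therefore suffices to identify $\gamma' = \pi A_0$.

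Since $\Psi_{1j}\equiv x_{1j}\pmod{\deg 2}$ (as noted in the proof of Proposition~\ref{Nrk}), the coefficient vector $\gamma'$ is literally the derivative of $\Psi$ at the origin, read off on the $\bx_1$-direction. So the main step is to compute $D((\phi\circ i - i\circ\mfrak{f})^*\Theta)$ at the identity section. The matrix $D\phi$ from~\eqref{Dphi} acts as a shift-and-multiply-by-$\pi$, so $D(\phi\circ i)$ sends the Witt coordinate $\bx_k$ on $N^{n+1}$ to $\pi$ times the $(k-1)$-st coordinate on the target. For $D\mfrak{f}$, the cleanest route is to linearize Theorem \ref{latfrob}'s identity $\phi^{\circ 2}\circ i = \phi\circ i\circ\mfrak{f}$: because $D(\phi\circ i)\colon \Lie N^n\to \Lie J^{n-1}A$ is injective, the equality of derivatives forces $D\mfrak{f}(\bx_1) = 0$ and $D\mfrak{f}(\bx_k) = \pi \bx_{k-1}$ for $k\geq 2$. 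Taking the difference and pairing with $D\Theta = (A_0,\ldots,A_n)$ isolates the contribution of $\bx_1$, producing $\pi A_0$ and zero on the remaining coordinates, whence $\gamma' = \pi A_0$.

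For part (2), the equality is a formal consequence of Theorem~\ref{latfrob} by induction on $n\geq 2$. The base case $n=2$ is precisely the stated identity $\phi^{\circ 2}\circ i = \phi\circ i\circ\mfrak{f}$. For the inductive step, apply Theorem~\ref{latfrob} once (with $j=n$) to write $\phi^{\circ n}\circ i = \phi^{\circ(n-1)}\circ i\circ\mfrak{f}$; the inductive hypothesis, applied to $\phi^{\circ(n-1)}\circ i$, rewrites this as $\phi\circ i\circ \mfrak{f}^{\circ(n-2)}\circ \mfrak{f} = \phi\circ i\circ\mfrak{f}^{\circ(n-1)}$. Pulling back $\Theta$ through the resulting identity of morphisms gives the desired equality of characters.

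The main technical obstacle lies in the derivative computation in part (1), specifically pinning down the exact form of $D\mfrak{f}$ at the identity. Exploiting the injectivity of $D(\phi\circ i)$ to bootstrap $D\mfrak{f}$ out of the compositional identity from Theorem~\ref{latfrob} is the cleanest approach, avoiding the direct manipulation of Witt-vector polynomials that the ghost-principle route via Proposition~\ref{latfrobw} would require.
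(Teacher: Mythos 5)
Your proof is correct and follows essentially the same strategy as the paper: apply Theorem~\ref{comfact} to write the difference as $\gamma\cdot\bPsi_1$, then pin down $\gamma$ via a derivative computation at the identity, and deduce part~(2) from the compositional identity of Theorem~\ref{latfrob}. The one place where you go further than the paper's (rather terse) proof is in explicitly computing $D\mfrak{f}$ at the origin by linearizing $\phi^{\circ 2}\circ i = \phi\circ i\circ \mfrak{f}$ and using the injectivity of $D(\phi\circ i)$ over $R$; this is a genuinely useful addition, since the paper's step ``hence $\gamma = \pi A_0$'' tacitly relies on the fact that $D\mfrak{f}$ annihilates the $\bx_1$-direction (equivalently, that the $\Psi_{ij}$ with $i \geq 2$ have vanishing $\bx_1$-component in their linear part over $R$, not merely modulo $\pi$), and your bootstrap argument makes that vanishing explicit where the paper leaves it implicit.
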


\begin{proof}

(1): 
By theorem~\ref{comfact}, there is an element $\gamma \in R^g$ such that 
	$$
	(i^* \phi^* - \mfrak{f}^* i^*)\Theta = \gamma . \bPsi_1
	$$
Then we have
	$$
	i^*\phi^* \Theta \equiv \gamma . \bPsi_1  
	\bmod (\Psi_{21}, \dots ,\Psi_{2g},\dots, \Psi_{(n+1)1},\dots, \Psi_{(n+1)g})
	$$
in $\Hom(N^{n+1},\hG)$. 
By equation (\ref{Dphi}) the derivative matrix 
$D(\Theta \circ \phi \circ i)$ at the identity section is 
\begin{equation}
(\pi A_{0}, \cdots, \pi A_n)
\end{equation}
Hence we have 
\begin{equation}
\label{thegam}
\gamma = \pi A_{0}.
\end{equation}

(2): This  follows directly from theorem \ref{latfrob}. 
\end{proof}

\begin{proposition}
\label{pro-filt}
For any $n\geq 0$, the diagram
    $$
    \xymatrix{
    \bX_n(E)/\bX_{n-1}(E) \ar@{^{(}->}^{\phi^*}[r]\ar@{^{(}->}^{i^*}[d] 
            & \bX_{n+1}(E)/\bX_n(E) \ar@{^{(}->}^{i^*}[d] \\
    \HomA(N^n,\hG)/\HomA(N^{n-1},\hG) \ar^{\mfrak{f}^*}_{\sim}[r] 
            & \HomA(N^{n+1},\hG)/\HomA(N^n,\hG)
    }
    $$
is commutative. The morphisms $i^*$ and $\phi^*$ are injective, and 
$\mfrak{f}^*$ is bijective.
\end{proposition}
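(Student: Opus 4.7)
The plan is to establish four claims in order: (i) commutativity of the square; (ii) injectivity of both vertical $i^*$ maps; (iii) bijectivity of the bottom $\mfrak{f}^*$; and (iv) injectivity of $\phi^*$, which I will deduce formally from (i)--(iii).

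For (i), fix $\Theta\in\bX_n(A)$ and consider
$$
i^*\phi^*\Theta - \mfrak{f}^*i^*\Theta = (\phi\circ i - i\circ\mfrak{f})^*\Theta \;\in\; \Hom(N^{n+1},\hG).
$$
Applying theorem~\ref{comfact} with $n+1$ in place of $n$, the morphism $\phi\circ i - i\circ\mfrak{f}\colon N^{n+1}\to J^nA$ factors through the iterated projection $p\colon N^{n+1}\to N^1$, say as $g\circ p$ for some $g\colon N^1\to J^nA$. Since $p = u\circ p'$ for some $p'\colon N^{n+1}\to N^n$, the element $p^*g^*\Theta$ lies in the image of $u^*\colon\Hom(N^n,\hG)\to\Hom(N^{n+1},\hG)$, and hence vanishes in the quotient $\Hom(N^{n+1},\hG)/u^*\Hom(N^n,\hG)$. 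This gives the desired commutativity.

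For (ii), suppose $\Theta\in\bX_n(A)=\Hom(J^nA,\hG)$ satisfies $i^*\Theta = u^*\Psi$ for some $\Psi\in\Hom(N^{n-1},\hG)$. Set $K := \ker(u\colon J^nA\to J^{n-1}A)$; then $K\subset N^n$ and $K = \ker(u\colon N^n\to N^{n-1})$. Consequently $\Theta|_K = u^*\Psi|_K = 0$, so $\Theta$ descends through the surjection $J^nA\to J^{n-1}A$ and lies in $u^*\bX_{n-1}(A)$. For (iii), I use the $R$-basis $\{\Psi_{ij}\}$ of $\Hom(N^n,\hG)$ from proposition~\ref{Nrk}. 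Directly from the definitions and the commutation of $\mfrak{f}$ with $u$ in the prolongation sequence $\{N^n\}$ (theorem~\ref{latfrob}), one computes $\mfrak{f}^*\Psi_{ij} = \Psi_{(i+1)j}$ and checks that pullback by $u$ identifies $\Hom(N^{n-1},\hG)$ with the $R$-span of $\{\Psi_{ij}\}_{i\leq n-1}$ inside $\Hom(N^n,\hG)$. Therefore $\Hom(N^n,\hG)/\Hom(N^{n-1},\hG)$ is free on $\{\Psi_{nj}\}_{j=1}^g$, and $\mfrak{f}^*$ carries this basis bijectively onto the basis $\{\Psi_{(n+1)j}\}_{j=1}^g$ of the target quotient.

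Finally, for (iv): if $\phi^*[\Theta]=0$ in $\bX_{n+1}/\bX_n$, then applying $i^*$ and using commutativity gives $\mfrak{f}^*(i^*[\Theta])=0$; bijectivity of $\mfrak{f}^*$ forces $i^*[\Theta]=0$; and injectivity of the left $i^*$ then yields $[\Theta]=0$. The two substantive steps are (i), which hinges on the precise factorization provided by theorem~\ref{comfact}, and (iii), which is a basis-shifting computation rooted in proposition~\ref{Nrk}. Step (ii) is a clean kernel-of-surjection argument, and (iv) is automatic from (i)--(iii).
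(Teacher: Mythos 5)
Your proof is correct and follows essentially the same route as the paper: the kernel argument for injectivity of $i^*$, the $\{\Psi_{ij}\}$-basis shift (proposition~\ref{Nrk}) for bijectivity of $\mfrak{f}^*$, and the formal deduction of injectivity of $\phi^*$ all match. The only cosmetic difference is in the commutativity step, where you invoke theorem~\ref{comfact} directly, whereas the paper cites proposition~\ref{diff}, which is itself a consequence of theorem~\ref{comfact}.
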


\begin{proof}
For $n\geq 1$, commutativity of the diagram follows from 
proposition~\ref{diff}; for $n=0$, it is immediate since $\bX_0(A) = 0$.

The maps $i^*$ are injective because the projections $J^nA \to J^{n-1}A$ and 
$N^n\to N^{n-1}$ have the same
kernel, and $\mfrak{f}^*$ is an isomorphism by proposition~\ref{Nrk}. 
It follows that  $\phi^*$ is an injection.
\end{proof}

\section{Finite generation of $\bX_\infty(A)_K$ as a $K\{\phi^*\}$-module}
\label{exactsequence}
For every $n$ we have the following short exact sequence
\begin{equation}
\label{se}
0 \map N^n \map J^nA \map A \map 0
\end{equation}
Applying $\Hom(-, \hG)$ to the above short exact sequence gives us
\begin{equation}
\label{sel}
0 \map \bX_n (A) \map \Hom(N^n,\hG) \stk{\partial}{\map} \Ext(A,\hG) 
\end{equation}
By the theory of extensions of groups that admit rational sections (see 
\cite{Serre}, page 185, theorem 7) we have 
$\Ext(A,\hG)\simeq H^1(A,\Ou_A) \simeq R^g$. 

Let $\bI_n(A) := \mb{image}(\partial)$. 
Note that since for all $n$, there are maps $\Hom(N^n,\hG) \stk{u^*}{\inj} \Hom(N^{n+1},\hG)$,
we have $\bI_n(A) \subset \bI_{n+1}(A)$. Define 
\begin{equation}
\label{bigI}
\bI(A):= \varinjlim \bI_n(A)
\end{equation}
and 
$$
h_i= \rk \bI_i(A) - \rk \bI_{i-1}(A)
$$
for all $i \geq 1$.
We define the {\it upper splitting number} to be the smallest number 
$\mup\geq 1$ such that $h_n=0$ for all $n \geq \mup$.  
Note that $\mup$ exists since $$\bI_0(A) \subset \bI_{1}(A) \subset \cdots \subset \Ext(A,\hG)\cong R^g.$$

We define the {\it lower splitting number} to be the unique $\mlow$ satisfying
$\bX_{\mlow}(A) \ne \{ 0\}$ and $\bX_{\mlow-1}(A) = \{0\}$.

\begin{lemma}
We have $\mlow =1$ or $\mlow=2$.
\end{lemma}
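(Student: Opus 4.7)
The plan is to argue by a simple rank count on the left-exact sequence (\ref{sel}), combined with the observation that $\bX_0(A) = 0$.

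First I would dispose of the lower bound $\mlow \geq 1$. By definition $\bX_0(A) = \Hom_S(J^0 A, \hG) = \Hom_S(A,\hG)$, and since $A$ is a proper $\pi$-formal scheme while $\hG$ is affine, any morphism of group schemes $A \to \hG$ vanishes. Hence $\bX_0(A) = 0$, which forces $\mlow \geq 1$.

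Next, for the upper bound $\mlow \leq 2$, I would show $\bX_2(A) \neq 0$ by a rank inequality. Applying $\Hom(-,\hG)$ to the exact sequence (\ref{se}) for $n = 2$ gives
$$
0 \to \bX_2(A) \to \Hom(N^2,\hG) \xrightarrow{\partial} \Ext(A,\hG).
$$
By proposition \ref{Nrk}, $\Hom(N^2,\hG)$ is free of rank $2g$ over $R$, and by the identification $\Ext(A,\hG) \cong H^1(A,\Ou_A)$ cited at the beginning of section \ref{exactsequence}, the right-hand term has rank $g$. Consequently $\rk_R \bX_2(A) \geq 2g - g = g$, and since $g \geq 1$ by our standing hypothesis, $\bX_2(A) \neq 0$. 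Therefore $\mlow \leq 2$.

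There is essentially no obstacle beyond recognizing that these two inputs (finite generation of $\Hom(N^n,\hG)$ of rank $ng$, and the classical extension computation $\Ext(A,\hG) \cong H^1(A,\Ou_A) \cong R^g$) are already in place. The only point that merits care is the vanishing $\bX_0(A) = 0$, which one must justify using the properness of the abelian scheme $A$ against the affineness of $\hG$; everything else is a one-line rank subtraction.
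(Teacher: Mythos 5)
Your proof is correct and follows essentially the same route as the paper: the authors also deduce $\mlow \leq 2$ by comparing $\rk_R \Hom(N^2,\hG)=2g$ (from proposition~\ref{Nrk}) with $\rk_R\Ext(A,\hG)=g$ in the left-exact sequence~(\ref{sel}), concluding $\rk_R \bX_2(A)\geq g\geq 1$. The one small addition you make is to explicitly justify $\bX_0(A)=0$ (via properness of $A$ against affineness of $\hG$) to pin down $\mlow\geq 1$; the paper treats this as implicit in the definition of $\mlow$ and only records $\bX_0(A)=0$ later without proof (in proposition~\ref{pro-filt}). So your write-up is a correct and slightly more complete version of the same argument.
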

{\it Proof.} 
We know $\rk_R \Ext(A,\hG) = g$ and
$\rk_R \Hom(N^n,\hG) = ng$, by corollary \ref{Nrk}.
So for $n\geq 2$, the kernel $\bX_n(A)$ in (\ref{sel}) has rank at least $g$, which is at least $1$.
$\qed$

Let us recall the the structure of $\bX_\infty(A)_K$ as a $K\{\phi^*\}$-module where $K\{\phi^*\}$ is the twisted
polynomial ring with commutation law $\phi^* r = \phi(r) \phi^*.$ The $R$-linear structure comes from the fact
that the target of characters is the $\pi$-formal group scheme $\hG$, which is an $R$-module scheme. The
$K\{\phi^*\}$-module structure is then given by extension of scalars from $R$ to $K$. The operator $\phi^*$ acts
on a character $\Theta \in \bX_n(A)$ via pull-back $\Theta \mapsto \phi^* \Theta$ as defined in
(\ref{phi-pull-back}).

The operators $u^*$ and $\phi^*$ induce a bi-filtration on $\bX_n(A)$:
$$
\xymatrix{
\bX_1(A) \ar^{u^*}[r] & \bX_2(A) \ar^{u^*}[r] & \cdots\ar^{u^*}[r] & \bX_{n-1}(A)\ar^{u^*}[r] & \bX_n(A) \\
0\ar[r]\ar[u]		   & \bX_1(A)_{\phi} \ar^{u^*}[r]\ar^{\phi^*}[u] & \cdots\ar^{u^*}[r] 
	& \bX_{n-2}(A)_{\phi}\ar^{u^*}[r]\ar^{\phi^*}[u] & \bX_{n-1}(A)_{\phi}\ar^{\phi^*}[u] \\
		   & 			& \ddots & \vdots \ar^{\phi^*}[u] & \vdots\ar^{\phi^*}[u] \\
		   &			&		 &	\bX_1(A)_{\phi^{n-2}}\ar^{\phi^*}[u]\ar^{u^*}[r] 
	& \bX_2(A)_{\phi^{n-2}}\ar^{\phi^*}[u]		   \\
		   &			&		 &	0\ar[r]\ar[u]			 & \bX_1(A)_{\phi^{n-1}}\ar^{\phi^*}[u]		   
}
$$
All the morphisms are $R$-linear and injective, 
and all the squares are pull-back squares, by proposition~\ref{pro-filt}.
We say a differential character $\Theta \in \bX_n(A)_K$ is {\it primitive} if 
	$$
	\Theta \notin u^* \bX_{n-1}(A)_K + \phi^* (\bX_{n-1}(A)_K)_{\phi}.
	$$
Below, we will write $\mathbbm{B}_i$ for a subset of $\bX_i(A)_K$ which
is a lift of a $K$-basis of
$$
\bX_i(A)_K/(u^*\bX_{i-1}(A)_K + \phi^* (\bX_{i-1}(A)_K)_{\phi}).
$$  
We will call $\mathbbm{B}_i$  a {\it primitive basis} for $\bX_i(A)_K$.
If $\mathbb{B}_1, \dots \mathbb{B}_n$ are primitive bases for $\bX_1(A)_K,\dots,\bX_n(A)_K$, then
it follows from the diagram above that $\bX_n(A)_K$ has a $K$-basis
	$$
	S_n(\mathbbm{B}_{1})\cup \cdots \cup S_n(\mathbbm{B}_{n})
	$$
where
$$
S_n(\mathbbm{B}_i)= \{\phi^{*h}\Theta~|~\mb{for all } 0 \leq h \leq (n-i) \mb{ and } \Theta \in \mathbbm{B}_i\} 
$$
In particular, if we denote the dimensions of the associated bi-graded pieces by
	\begin{equation}
		\label{neo2}
	l_i = \rk \bX_i(A)_K /(u^*\bX_{i-1}(A)_K  + \phi^* (\bX_{i-1}(A)_K)_{\phi}),		
	\end{equation}
then we have
	\begin{equation}
		\label{neo}
	\rk \bX_n(A)_K = l_n + 2l_{n-1} + \cdots + nl_1.
	\end{equation}

\begin{lemma}
\label{diff1}
For all $n \geq 2$, $l_n = h_{n-1} - h_n$.
\end{lemma}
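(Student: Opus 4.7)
The plan is to derive the equality by computing $\rk \bX_n(A)_K$ in two different ways and subtracting. The first way is via the defining short exact sequence
$$0 \to \bX_n(A) \to \Hom(N^n,\hG) \to \bI_n(A) \to 0.$$
Since $\rk_R \Hom(N^n,\hG) = ng$ by proposition~\ref{Nrk}, taking ranks gives $\rk \bX_n(A) = ng - \rk \bI_n(A)$. Subtracting the analogous identity for $n-1$ yields
$$\rk \bX_n(A)_K - \rk \bX_{n-1}(A)_K = g - h_n.$$

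The second way uses the formula (\ref{neo}), which expresses $\rk \bX_n(A)_K = l_n + 2l_{n-1} + \cdots + nl_1$. Subtracting the corresponding expression for $n-1$ gives a simple telescoping sum
$$\rk \bX_n(A)_K - \rk \bX_{n-1}(A)_K = l_1 + l_2 + \cdots + l_n.$$

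Setting the two expressions equal, $l_1 + \cdots + l_n = g - h_n$. Applying this identity once for $n$ and once for $n-1$ (the latter requires $n \geq 2$, so that $\bI_{n-2}$ is defined and the formula $l_1+\cdots+l_{n-1}=g-h_{n-1}$ makes sense) and subtracting yields
$$l_n = (g - h_n) - (g - h_{n-1}) = h_{n-1} - h_n,$$
as desired.

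There is essentially no obstacle here: the argument is a bookkeeping exercise once one has the rank formula in proposition~\ref{Nrk} and the bi-graded decomposition in (\ref{neo}). The only point worth noting is why formula (\ref{neo}) is valid, namely that the square in proposition~\ref{pro-filt} is a pull-back, ensuring that the successive quotients of the bi-filtration by $u^*$ and $\phi^*$ are correctly counted, and that all the maps $u^*$ and $\phi^*$ are injective.
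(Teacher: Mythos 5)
Your proof is correct and follows essentially the same route as the paper's: both compute $\rk \bX_n(A)_K - \rk \bX_{n-1}(A)_K$ in two ways (once from the exact sequence and once from the telescoping of formula (\ref{neo})) and then subtract to isolate $l_n$. Your additional remark about why $n \geq 2$ is needed and why (\ref{neo}) is valid is accurate but not a departure from the paper's argument.
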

\begin{proof}
From the exact sequence
\begin{equation}
0 \map \bX_n (A)_K \map \Hom(N^n,\hG)_K \stk{\partial}{\map} \bI_n(A)_K \map 0,
\end{equation}
we have $ \rk_K(\bI_n(A)_K)=ng-\rk(\bX_n (A)_K)$.
Combining this with (\ref{neo}), we have
\begin{align*}
	h_n &= (ng-\rk(\bX_n (A)_K)) - ((n-1)g - \rk(\bX_{n-1} (A)_K)) \\
		&= g - (l_1+\cdots l_n)
\end{align*}
and hence $h_{n-1}-h_n=l_n$.
\end{proof}

\begin{lemma}
\label{decrease}
For all $n\geq 1$, $h_n$ is a (weakly) decreasing function of $n$.
\end{lemma}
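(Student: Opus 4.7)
The plan is to deduce this directly from Lemma \ref{diff1}, which is the main technical input already in hand. That lemma identifies the difference $h_{n-1} - h_n$ with the invariant $l_n$ defined in (\ref{neo2}), namely
$$
l_n = \rk_K \bX_n(A)_K/(u^*\bX_{n-1}(A)_K + \phi^*(\bX_{n-1}(A)_K)_{\phi}).
$$
Since $l_n$ is by definition the $K$-dimension of a quotient vector space, it is automatically non-negative. Hence $h_{n-1} \geq h_n$ for all $n \geq 2$, which is precisely the asserted monotonicity $h_1 \geq h_2 \geq h_3 \geq \cdots$.

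Thus the proof is essentially a one-line invocation of the preceding lemma together with the trivial observation that ranks are non-negative. The only step worth checking is that the range of indices matches: the claim of monotonicity for $n \geq 1$ requires the inequalities $h_n \geq h_{n+1}$ for $n \geq 1$, and each of these is the content of Lemma \ref{diff1} applied at index $n+1 \geq 2$.

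Conceptually, what is going on is this: $h_n$ measures how much new information the $n$-th order characters contribute towards splitting the extension class in $\Ext(A,\hG) \cong H^1(A, \mathcal{O}_A)$. The sequence $\bI_n(A) \subset \Ext(A,\hG)$ is increasing and bounded (the ambient module has rank $g$), so eventually stabilizes, but the refined monotonicity of the \emph{increments} $h_n$ reflects the structural fact that any new primitive character at level $n$ yields, via its derivative, a new tangent direction that could have been predicted by the existing bi-filtration, so the growth rate can never accelerate. There is no real obstacle here; the hard work was done in establishing Lemma \ref{diff1} and the bi-filtration decomposition (\ref{neo}).
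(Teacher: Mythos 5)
Your proof is correct and uses exactly the same argument as the paper: apply Lemma~\ref{diff1} to write $h_{n-1}-h_n=l_n$, and note that $l_n\geq 0$ because it is the $K$-dimension of a quotient vector space. The paper's own proof is a one-liner saying precisely this.
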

\begin{proof}
	This follows from lemma \ref{diff1} since $l_n\geq 0$ for all $n$.
\end{proof}

\begin{corollary}
\label{hua}
If $h_N=0$, then $l_n =0$ for all $n \geq N+1$.
\end{corollary}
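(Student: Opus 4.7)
The plan is to combine the two preceding lemmas. By Lemma~\ref{decrease}, the sequence $(h_n)_{n\geq 1}$ is weakly decreasing. Moreover, each $h_n$ is non-negative, since it is defined as $\rk \bI_n(A) - \rk \bI_{n-1}(A)$ for the increasing chain $\bI_{n-1}(A)\subset \bI_n(A)$. Therefore, if $h_N=0$, then $0\leq h_n\leq h_N=0$ for all $n\geq N$, so $h_n=0$ for all $n\geq N$.

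Now fix $n\geq N+1$; in particular $n\geq 2$, so Lemma~\ref{diff1} applies and gives $l_n = h_{n-1}-h_n$. Since $n-1\geq N$ and $n\geq N$, both $h_{n-1}$ and $h_n$ vanish by the preceding paragraph, and hence $l_n=0$, as required.

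The argument is essentially a one-line consequence of the two lemmas, so there is no real obstacle; the only subtlety is observing that $h_n\geq 0$ by construction, which together with the weak monotonicity forces $h_n$ to be identically zero from index $N$ onwards.
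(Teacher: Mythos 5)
Your proof is correct and follows exactly the route the paper intends (its own proof is the one-liner ``this follows from Lemmas~\ref{diff1} and~\ref{decrease}''): non-negativity of the $h_n$ together with weak monotonicity forces $h_n=0$ for $n\geq N$, and then $l_n=h_{n-1}-h_n=0$ for $n\geq N+1$. You are also right to note that $n\geq N+1\geq 2$ (since $h_N$ is only defined for $N\geq 1$), so Lemma~\ref{diff1} applies.
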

\begin{proof}
	This follows from lemmas \ref{diff1} and \ref{decrease}.
\end{proof}

\begin{proposition}
\label{lowandup}
(1) The largest integer $n$ such that $l_n \neq 0$ is $n=\mup$. In other words, the
largest order of a primitive character is $\mup$.

(2) The smallest order of a primitive character is $\mlow$. 
In particular, $\mlow \leq \mup$.
\end{proposition}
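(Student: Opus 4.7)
The plan is to derive both parts directly from Lemma~\ref{diff1} and the short exact sequence~(\ref{sel}). The heart of the matter is the formula $l_n = h_{n-1}-h_n$ for $n\geq 2$ from Lemma~\ref{diff1}, combined with the base identity $l_1 = g - h_1$, which falls out of~(\ref{sel}) together with Proposition~\ref{Nrk} and the vanishing $\bX_0(A)=0$ (the latter because $N^0=S$ and $\bI_0=0$).

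For part (1), I first verify that $l_n=0$ for $n>\mup$: this is immediate from Lemma~\ref{diff1}, since $h_{n-1}$ and $h_n$ both vanish by definition of $\mup$. Non-vanishing of $l_{\mup}$ then splits into two cases. If $\mup\geq 2$, then $l_{\mup} = h_{\mup-1}-h_{\mup} = h_{\mup-1}$, and the latter is nonzero because otherwise Lemma~\ref{decrease} would force $h_n=0$ for all $n\geq\mup-1$, contradicting the minimality of $\mup$. If $\mup=1$, the recurrence no longer applies and one must use the base identity: $h_1=0$ by definition of $\mup$, so $l_1 = g \neq 0$ since $g\geq 1$ by assumption.

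For part (2), the vanishing $\bX_n(A)=0$ for $n<\mlow$ trivially forces $l_n=0$ in that range. At $n=\mlow$ the two subspaces in the denominator of~(\ref{neo2}) vanish since $\bX_{\mlow-1}(A)=0$, so $l_{\mlow}=\rk_K \bX_{\mlow}(A)_K$. This is nonzero because $\bX_{\mlow}(A)$ is nonzero by definition and is torsion-free as a submodule of the free $R$-module $\Hom(N^{\mlow},\hG)$, so it survives the inversion of $\pi$. The inequality $\mlow \leq \mup$ is then automatic: part (1) shows that the set of $n$ with $l_n\neq 0$ has maximum $\mup$, while part (2) produces such an $n$ at $\mlow$. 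No serious obstacle presents itself; the argument is essentially bookkeeping, with the one subtlety being that the degenerate case $\mup=1$ must be handled separately because Lemma~\ref{diff1} only gives the recurrence for $n\geq 2$.
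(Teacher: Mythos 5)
Your proof is correct and follows essentially the same route as the paper, resting on Lemma~\ref{diff1} together with the exact sequence~(\ref{sel}). You are in fact a bit more careful than the paper: the paper writes $l_{\mup}=h_{\mup-1}-h_{\mup}\geq 1$ without addressing the degenerate case $\mup=1$ (where $h_0$ is not defined by the given formula), which your separate base identity $l_1=g-h_1$ handles cleanly; likewise your torsion-freeness remark makes explicit the passage from $\bX_{\mlow}(A)\neq 0$ to $\bX_{\mlow}(A)_K\neq 0$, which the paper takes silently.
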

\begin{proof} (1):
By definition of
$\mup$, we have $h_{\mup} = h_{\mup+1} = \cdots = 0$. This implies by 
lemma \ref{diff1} that $l_n =0$ for all $n\geq \mup +1$. On the other hand,
$h_{\mup-1}\geq 1$ and hence $l_{\mup}=h_{\mup-1}-h_{\mup} \geq 1$.
Thus there is a primitive character of order $\mup$.

(2): 
Since $\bX_{\mlow-1}(A)_K=\{0\}$ and $\bX_{\mlow}(A)_K\neq \{0\}$, there exists a primitive
character of order $\mlow$ and no smaller order.
\end{proof}

Now consider the integers $i$ for which there exists a primitive character of order $i$. By the previous result,
they fall into a sequence
	$$
	\mlow = i_1 < \cdots < i_r = \mup.
	$$
This sequence forms an interesting numerical invariant of $A$. A more refined invariant would include
the multiplicities $l_{i_1},l_{i_2},\dots,l_{i_r}$.

\begin{theorem}
\label{maxorder}
For any abelian scheme $A$ of dimension $g$, $\bX_\infty(A)_K$ is freely
$K\{\phi^*\}$-generated by $g$ differential characters of order at most $g+1$. 
Thus
$l_{\mlow}+\cdots+ l_{\mup}=g$ and $\mup \leq g+1$.
\end{theorem}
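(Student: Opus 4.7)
The plan is to derive all three claims---the bound on $\mup$, the total multiplicity being $g$, and the freeness---from the numerical machinery already in place: the bi-filtration on $\bX_n(A)_K$ by $u^*$ and $\phi^*$, the exact sequence (\ref{sel}), and the weak monotonicity of $h_n$ (lemma~\ref{decrease}).

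First I would bound $\mup$. The minimality of $\mup$ together with lemma~\ref{decrease} forces $h_n \geq 1$ for every $1 \leq n < \mup$; indeed, if $h_m = 0$ for some $m < \mup$, then weak monotonicity would cascade this into $h_n = 0$ for all $n \geq m$, contradicting the minimality. On the other hand, all of the $R$-modules $\bI_n(A)$ embed in $\Ext(A,\hG) \cong R^g$, so $\rk \bI_{\mup - 1}(A) \leq g$. Since $\rk \bI_{\mup - 1}(A) = \sum_{n=1}^{\mup - 1} h_n \geq \mup - 1$, we obtain $\mup \leq g + 1$.

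For the multiplicity count, the exact sequence (\ref{sel}) and lemma~\ref{Nrk} give $\rk \bX_1(A)_K = g - h_1$, and since $\bX_0(A) = 0$ this reads $l_1 = g - h_1$. Combining with lemma~\ref{diff1} and telescoping:
$$\sum_{n=1}^{\mup} l_n \;=\; (g - h_1) + \sum_{n=2}^{\mup}(h_{n-1} - h_n) \;=\; g - h_{\mup} \;=\; g.$$
By proposition~\ref{lowandup} the support of $\{l_n\}_{n \geq 1}$ is contained in $[\mlow,\mup]$, yielding $l_\mlow + \cdots + l_\mup = g$.

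For freeness, choose primitive lifts $\mbB_i \subset \bX_i(A)_K$ as in the discussion preceding (\ref{neo2}). As noted there, $\bigsqcup_{i \leq n} S_n(\mbB_i)$ is a $K$-basis of $\bX_n(A)_K$ for every $n$, so passing to the direct limit shows that $\bigsqcup_i \{\phi^{*h}\Theta : \Theta \in \mbB_i,\ h \geq 0\}$ is a $K$-basis of $\bX_\infty(A)_K$. This is precisely the statement that $\bigsqcup_i \mbB_i$ freely generates $\bX_\infty(A)_K$ as a $K\{\phi^*\}$-module. The first two steps above ensure this generating set has exactly $g$ elements, all of order at most $\mup \leq g+1$. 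The only conceptual point in the whole argument is the bound in the first step, and even there the real work was done earlier in establishing the monotonicity of $h_n$; the remaining steps are bookkeeping around the bi-filtration picture already set up.
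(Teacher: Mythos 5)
Your proof is correct and follows essentially the same route as the paper: it derives the multiplicity count $l_1 + \cdots + l_{\mup} = g$ from $l_1 = g - h_1$, lemma~\ref{diff1}, and telescoping (the paper abbreviates this as $l_1 + h_1 = g$ without spelling out the intermediate identity), and it deduces $\mup \leq g+1$ from the monotonicity of the $h_n$ together with the bound $\sum h_n \leq g$. Your phrasing of the $\mup$ bound via $\rk \bI_{\mup-1}(A) \geq \mup - 1$ is a slight repackaging of the paper's ``weakly decreasing non-negative sequence summing to at most $g$'' argument, but it is the same idea, and the freeness step by passage to the direct limit over the explicit $K$-bases $S_n(\mbB_i)$ is likewise identical to the paper's.
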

\begin{proof}
Note that $\bX_\infty(A)_K$ is freely generated by the 
primitive characters $\mathbb{B}_{1}\cup\cdots\cup\mathbb{B}_{\mup}$.
The number of them is
	\begin{align*}
	l_1+\cdots+l_{\mup} &= l_1 + (h_1-h_2)+ \cdots + (h_{\mup-1}-h_{\mup}) \\
		&= l_1 + h_1 \\
		&= g.
	\end{align*}

Now we show their orders are at most $g+1$.
Since $\sum h_i \leq g$ and  $h_i$
are a weakly decreasing sequence of non-negative integers, we must have
$h_i =0$ for all $i \geq g+1$, and hence $\mup \leq g+1$.
\end{proof}

Now define $$\bXp(A):= \varinjlim \bX_n(A)/\phi^*\bX_{n-1}(A)_{\phi}.$$

\begin{corollary}
\label{xprim}
We have 
$$\bXp(A)_K\simeq\bX_{\mup}(A)_K/\phi^* (\bX_{\mup-1}(A)_K)_{\phi} $$
Moreover, $\mathbbm{B}_{i_1} \cup \cdots \cup
\mathbbm{B}_{i_r}$ is a $K$-basis for $\bXp(A)_K$.
\end{corollary}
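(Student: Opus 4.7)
The plan is to use the explicit $K$-basis structure of $\bX_n(A)_K$ provided by the bi-filtration. Recall that $\bX_n(A)_K$ admits the $K$-basis $S_n(\mathbbm{B}_1)\cup\cdots\cup S_n(\mathbbm{B}_n)$, where $S_n(\mathbbm{B}_i)=\{\phi^{*h}\Theta : \Theta\in\mathbbm{B}_i,\ 0\leq h\leq n-i\}$, and that by proposition~\ref{lowandup} we have $\mathbbm{B}_i\neq\emptyset$ precisely when $i\in\{i_1,\ldots,i_r\}$, with $i_r=\mup$.

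First I would compute $\phi^*(\bX_{n-1}(A)_K)_\phi$ as a $K$-subspace of $\bX_n(A)_K$. Since $\phi$ induces an automorphism of $K$ (the residue field being perfect), base change along $\phi$ preserves $K$-dimension, and applying $\phi^*$ to a $K$-basis of $\bX_{n-1}(A)_K$ yields a $K$-basis of its image. Concretely, $\phi^*(\bX_{n-1}(A)_K)_\phi$ is the $K$-span of $\{\phi^{*h}\Theta : \Theta\in\mathbbm{B}_i,\ 1\leq i\leq n-1,\ 1\leq h\leq n-i\}$. Comparing with the basis of $\bX_n(A)_K$, only the $h=0$ summands survive in the quotient, so $\bX_n(A)_K/\phi^*(\bX_{n-1}(A)_K)_\phi$ has $K$-basis $\mathbbm{B}_1\cup\cdots\cup\mathbbm{B}_n$; for $n\geq\mup$ this reduces to $\mathbbm{B}_{i_1}\cup\cdots\cup\mathbbm{B}_{i_r}$.

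Finally I would observe that the transition maps in the direct system $\{\bX_n(A)_K/\phi^*(\bX_{n-1}(A)_K)_\phi\}_n$ are induced by $u^*$ and send each primitive basis element $\Theta\in\mathbbm{B}_{i_j}$ to its canonical image in the next term. For $n\geq\mup$ these are bijections on the basis sets, and hence isomorphisms of $K$-vector spaces. Consequently the direct limit $\bXp(A)_K=\varinjlim_n\bX_n(A)_K/\phi^*(\bX_{n-1}(A)_K)_\phi$ stabilizes at level $\mup$, giving the desired isomorphism together with the claimed basis. The only mildly delicate point is the $K$-linear description of $\phi^*(\bX_{n-1}(A)_K)_\phi$ as a span of $\phi^*$-images of basis elements; once this is secured, the remainder is a direct reading of the bi-filtration diagram.
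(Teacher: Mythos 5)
Your proof matches the paper's argument: both rely on the bi-filtration basis $S_n(\mathbbm{B}_1)\cup\cdots\cup S_n(\mathbbm{B}_n)$ and proposition~\ref{lowandup} to identify $\phi^*(\bX_{n-1}(A)_K)_\phi$ with the span of the non-primitive basis elements $S_n(\mathbbm{B}_i)\setminus\mathbbm{B}_i$, so the quotient is spanned by $\mathbbm{B}_{i_1}\cup\cdots\cup\mathbbm{B}_{i_r}$ and the direct system stabilizes at level $\mup$. One small quibble: the assertion that $\phi$ induces an automorphism of $K$ is neither needed nor obviously true in this generality --- all you actually use is that base change along a field map preserves dimension and that $\phi^*$ is injective (proposition~\ref{pro-filt}), which together already show that $\phi^*$ carries a $K$-basis of $(\bX_{n-1}(A)_K)_\phi$ to a $K$-basis of the image.
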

\begin{proof}
For all $n \geq \mup$,  the subset
$S_n(\mathbbm{B}_{i_1}) \cup \cdots \cup S_n(\mathbbm{B}_{i_r})$ 
generates $\bX_n(A)_K$ as a $K$-module, whereas 
$(S_n(\mathbbm{B}_{i_1})\backslash \mbB_{i_1}) \cup \cdots 
\cup (S_n(\mathbbm{B}_{i_r})\backslash \mbB_{i_r})$ 
generates $\phi^*(\bX_{n-1}(A)_K)_{\phi}$ as a $K$-module. Therefore 
$\bX_n(A)_K/ \phi^*(\bX_{n-1}(A)_K)_{\phi}$ is generated by $\mbB_{i_1} \cup \cdots \cup
\mbB_{i_r}$ as a $K$-module for all $n \geq \mup$ and hence in particular
	$$
	\bXp(A)_K \simeq \bX_{\mup}(A)_K/\phi^*(\bX_{\mup -1}(A)_K)_{\phi}.
	$$
\end{proof}

\begin{corollary}
\label{mell}
If $g=1$, then $\mlow= \mup =: m$ and $\bXp(A)_K \simeq \bX_m(A)_K$.
\end{corollary}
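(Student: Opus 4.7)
The plan is to deduce this directly from Theorem~\ref{maxorder} and Corollary~\ref{xprim}, since for elliptic curves the dimension constraint $g=1$ forces the bi-graded decomposition of $\bX_\infty(A)_K$ to collapse to a single piece.

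First, I would recall from Theorem~\ref{maxorder} that $l_{\mlow} + \cdots + l_{\mup} = g$. Taking $g=1$, this reads $l_{\mlow} + l_{\mlow+1} + \cdots + l_{\mup} = 1$. The next step is to observe that the endpoints contribute at least $1$ each: by Proposition~\ref{lowandup}(1), $l_{\mup} \geq 1$ since $\mup$ is characterized as the largest $n$ with $l_n \neq 0$; and by Proposition~\ref{lowandup}(2), a primitive character of order $\mlow$ exists, so $l_{\mlow} \geq 1$ as well. Since all the $l_i$ are nonnegative integers summing to $1$, we must have $\mlow = \mup$ (call this common value $m$), and moreover $l_m = 1$.

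For the second assertion, I would apply Corollary~\ref{xprim}, which gives
\[
\bXp(A)_K \;\simeq\; \bX_{\mup}(A)_K \,/\, \phi^*(\bX_{\mup-1}(A)_K)_{\phi}.
\]
Since $\mup = m = \mlow$, the denominator involves $\bX_{\mlow - 1}(A)_K$, which vanishes by the very definition of $\mlow$. Therefore the quotient is just $\bX_m(A)_K$ itself, yielding the desired isomorphism $\bXp(A)_K \simeq \bX_m(A)_K$.

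There is essentially no obstacle here; the result is a bookkeeping consequence of the previously established structure theorems. The only point requiring any care is making sure one uses the correct characterization of $\mlow$ and $\mup$ from Proposition~\ref{lowandup} to conclude both $l_{\mlow} \geq 1$ and $l_{\mup} \geq 1$, as it is the combination of these two lower bounds with the sum constraint $\sum l_i = 1$ that forces $\mlow$ and $\mup$ to coincide.
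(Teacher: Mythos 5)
Your proof is correct, and it takes a somewhat different route from the paper's. The paper argues by cases on $\mlow \in \{1,2\}$ (as guaranteed by the preceding lemma) and works directly with the $h_n$: when $\mlow=1$ one checks $h_1=0$ because $\bX_1(A)_K$ is all of the one-dimensional $\Hom(N^1,\hG)_K$, and when $\mlow=2$ one checks $h_1=1$ by injectivity of $\partial$ on $\Hom(N^1,\hG)_K$ and the rank-$1$ constraint on $\Ext(A,\hG)_K$; in either case $h_n$ then vanishes for $n\geq 2$ and $\mup$ is read off. You instead bypass the case split entirely by invoking Theorem~\ref{maxorder}'s identity $\sum_{i=\mlow}^{\mup} l_i = g = 1$ together with the fact, from Proposition~\ref{lowandup}, that both $l_{\mlow}$ and $l_{\mup}$ are positive, which pins them down in one stroke. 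Your approach is cleaner and more conceptual — it lifts the burden off concrete facts about $h_1$ and puts it on the structure theorem, where it belongs — whereas the paper's case-by-case computation is more elementary but duplicates some of the bookkeeping already encoded in Lemma~\ref{diff1} and Theorem~\ref{maxorder}. Your deduction of $\bXp(A)_K \simeq \bX_m(A)_K$ from Corollary~\ref{xprim} and the vanishing of $\bX_{\mlow-1}(A)_K$ is exactly right, and in fact the paper's printed proof leaves this final step implicit, so you have spelled out a detail the paper glosses over.
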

\begin{proof}
If $\mlow =1$, then note that $h_1=0$. Since $h_n$ is a weakly decreasing
function in $n$, we have $h_1= h_2 = \cdots = 0$. Therefore the rank of 
$\bI_n(A)$ is $0$ for all $n \geq 0$ and hence $\mup =1$.

If $\mlow=2$, then note that $h_1=1$ since $\partial:\Hom(N^1,\hG)_K \map 
\Ext(A,\hG)_K$ is injective. But since $\rk_K\Ext(A,\hG)_K=1$, we have
$h_2=h_3= \cdots = 0$. Therefore $\rk \bI_i(A)$ is constant for all $i\geq 1$
and hence $\mup=2$.
\end{proof}

\section{The $F$-isocrystal and Hodge sequence of $A$}
In this section, we show that $\bH(A)_K$ is a filtered isocrystal.
Given our choice of \'{e}tale coordinates, we construct 
a canonical $K$-basis of our filtered isocrystal $\bH(A)$ and also show the
exact sequence corresponding to this filtration admits a canonical map to 
the Hodge sequence of $A$ as introduced in (\ref{Asharp3}) or (\ref{Asharp4}).
This will be shown in theorem \ref{isocrys} and corollary \ref{filtisocor}.

\begin{proposition}
\label{Hniso}
The morphism $$u^*: \bH_n(A)_K \map \bH_{n+1}(A)_K$$ is injective. For 
$n \geq \mup$, it is an isomorphism.
\end{proposition}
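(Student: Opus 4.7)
The plan is to fit $\bH_n(A)$ into a natural short exact sequence, compare levels $n$ and $n+1$ via $u^*$, and apply the five lemma. Combining the definition of $\bH_n(A)$ with the exact sequence $0\to\bX_n(A)\to\Hom(N^n,\hG)\to\bI_n(A)\to 0$ of~(\ref{sel}) gives
$$
0 \to \bX_n(A)/\phi^*(\bX_{n-1}(A)_\phi) \to \bH_n(A) \to \bI_n(A) \to 0;
$$
the composition $\bH_n(A)\to\bI_n(A)$ has kernel $i^*(\bX_n(A))/i^*\phi^*(\bX_{n-1}(A)_\phi)$, which by injectivity of $i^*$ identifies with the leftmost term. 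The operator $u^*$ acts compatibly on all three terms (using $u^*\phi^*=\phi^*u^*$ on characters and the naturality of the connecting map $\partial$ in the $\Ext$ long exact sequence), so one obtains a morphism of short exact sequences from level $n$ to level $n+1$.

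I then analyse the two outer vertical arrows after tensoring with $K$. The right-hand arrow $\bI_n(A)_K\hookrightarrow\bI_{n+1}(A)_K$ is an inclusion of submodules of $\Ext(A,\hG)_K$, so it is always injective; it is an equality when $n+1\geq\mup$ since $h_{n+1}=0$ by the definition of $\mup$. For the leftmost arrow, the bi-graded basis description of $\bX_\infty(A)_K$ from section~\ref{exactsequence} gives a $K$-basis of $\bX_n(A)_K$ of the form $\{\phi^{*h}\alpha : \alpha\in\mathbbm{B}_i,\ h\geq 0,\ i+h\leq n\}$. The contributions with $h\geq 1$ span $\phi^*(\bX_{n-1}(A)_K)_\phi$, so a $K$-basis of $\bX_n(A)_K/\phi^*(\bX_{n-1}(A)_K)_\phi$ is given by $\bigcup_{i\leq n}\mathbbm{B}_i$. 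Under this description $u^*$ is literally the inclusion of basis sets (it preserves the order of a primitive character and commutes with $\phi^*$), hence injective in general and surjective precisely when $\mathbbm{B}_{n+1}=\emptyset$, i.e., $l_{n+1}=0$; by lemma~\ref{diff1} and the definition of $\mup$, this last vanishing holds for $n\geq\mup$.

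Applying the five lemma to the resulting morphism of short exact sequences now yields injectivity of $u^*\colon\bH_n(A)_K\to\bH_{n+1}(A)_K$ unconditionally, and surjectivity for $n\geq\mup$. The only non-formal ingredient is the basis description of $\bX_n(A)_K/\phi^*(\bX_{n-1}(A)_K)_\phi$ by primitive characters; once this is in place the rest is routine five-lemma bookkeeping, and this identification is the step I would anticipate as the main technical input.
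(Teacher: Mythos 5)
Your proof is correct but takes a genuinely different route from the paper's. The paper writes $\bH_n(A)_K$ as the cokernel of the injection $i^*\phi^*\colon (\bX_{n-1}(A)_K)_\phi \to \Hom(N^n,\hG)_K$ and compares levels $n$ and $n+1$ via the snake lemma: injectivity of $u^*$ then falls out of the injectivity of $i^*\phi^*$ on the associated graded piece $\Hom(N^{n+1},\hG)_K/\Hom(N^{n},\hG)_K$, which is Proposition~\ref{pro-filt}, and surjectivity for $n \geq \mup$ is obtained by a rank count on that same graded piece using the primitive-character basis. You instead exhibit $\bH_n(A)_K$ as the middle term of $0 \to \bX_n(A)_K/\phi^*(\bX_{n-1}(A)_K)_\phi \to \bH_n(A)_K \to \bI_n(A)_K \to 0$ (which the paper only records later as diagram~(\ref{diag-crys})) and run the short five lemma, analyzing the left arrow via the bi-graded primitive-basis description and the right arrow via the chain of submodules $\bI_n(A)_K \subset \bI_{n+1}(A)_K \subset \Ext(A,\hG)_K$ together with Lemma~\ref{diff1}. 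The two arguments ultimately rest on the same inputs (Proposition~\ref{pro-filt} and the definitions of $\mup$, $h_n$, $l_n$), so neither is more general; the paper's is a bit leaner because it works directly with the defining quotient of $\bH_n$, while yours foregrounds the Hodge-type filtration that is introduced immediately afterward and makes explicit the pleasant observation that $\bigcup_{i\le n}\mathbbm{B}_i$ is a $K$-basis of $\bX_n(A)_K/\phi^*(\bX_{n-1}(A)_K)_\phi$, under which $u^*$ is literally inclusion of index sets.
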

\begin{proof}
Consider the following diagram of exact sequences:
        $$
        \xymatrix{
        & 0 & 0 \\
        & 
                \frac{(\bX_n(A)_K)_{\phi}}{u^*(\bX_{n-1}(A)_K)_{\phi}}  
\ar[u]\ar[r]^-{i^*\phi^*} 
& \frac{\Hom(N^{n+1},\hG)_K}{\Hom(N^n,\hG)_K \ar[u]} \\
        0 \ar[r] & 
                (\bX_{n}(A)_K)_{\phi} \ar[u] \ar^-{i^*\phi^*}[r] & 
                \Hom(N^{n+1},\hG)_K \ar[u] \ar[r]& \bH_{n+1}(A)_K \ar[r] & 
                0 \\
        0 \ar[r] & 
                (\bX_{n-1}(A)_K)_{\phi} \ar^{u^*}[u] \ar^-{i^*\phi^*}[r] & 
                \Hom(N^{n},\hG)_K \ar^{u^*}[u] \ar[r] & \bH_n(A)_K \ar[r]\ar^{u^*}[u] & 
                0 \\
        & 
                0 \ar[u] & 
                0 \ar[u] &  & 
        }
        $$
Then $i^*\phi^*: \bX_n(A)/\bX_{n-1}(A) \map \Hom(N^{n+1},\hG)/\Hom(N^n,
\hG)$ is injective by proposition \ref{pro-filt} and hence, by snake lemma,
 $\bH_n(A)_K \map \bH_{n+1}(A)_K$ is injective for all $n$.

It remains to show that $u^*:\bH_n(A)_K \map \bH_{n+1}(A)_K$ is 
surjective for all $n \geq \mup$.
Let $\bX_\infty(A)$ be primitively generated by the $g$ primitive characters
$\mathbbm{B}_{\mlow}\cup \cdots \cup \mathbbm{B}_{\mup}$ as in theorem 
\ref{maxorder}. Then the image of $(S_n(\mathbbm{B}_{\mlow})\backslash 
S_{n-1}(\mbB_{\mlow})) \cup \cdots \cup (S_n(\mbB_{\mup}) \backslash 
S_{n-1}(\mathbbm{B}_{\mup}))$
forms a $K$-basis for $\bX_n(A)_K/u^*\bX_{n-1}(A)_K$
for all $n \geq \mup$. Hence $\rk \bX_n(A)_K/u^*\bX_{n-1}(A)_K = g =
\Hom(N^n,\hG)_K/\Hom(N^{n-1},\hG)_K$ which implies $i^*\phi^*$ is surjective
for all $n\geq \mup$ and hence $u^*:\bH_n(A) \map \bH_{n+1}(A)$ is surjective.

\end{proof}

Let us recall the diagram (\ref{trivext})
	\begin{equation}
\nonumber
	\xymatrix{
		0 \ar[r] & \bX_n(A) \ar[r] \ar[d] & 
		\HomA(N^n,\hG) \ar^{\partial}[r] \ar[d]_{\Psi\mapsto (A_\Psi^*,s_\Psi)}& 
		\Ext(A,\hG)  \ar@{=}[d]& \\
		0 \ar[r] &\dualmod{\Lie(A)} \ar[r] & \Ext^\sharp(A,\hG) 
\ar[r] & \Ext(A,\hG) \ar[r] & 0
	}
	\end{equation}
Then by proposition \ref{pro:formulas}(3), $i^*\phi^*(\bX_{n-1}(A)_{\phi})$ is in 
the kernel
of the middle vertical map. Therefore 
this diagram induces a diagram
	\begin{equation}
	\label{diag-crys}
	\xymatrix{
	0 \ar[r] & \frac{\bX_n(A)}{\phi^*(\bX_{n-1}(A)_{\phi})} \ar[r] 
\ar[d]_-\Upsilon & 
	\bH_n(A) \ar[r] \ar[d]^-{\Phi} & 
	\bI_n(A) \ar[r] \ar@{^{(}->}[d]& 0\\
	0 \ar[r] &\dualmod{\Lie(A)} \ar[r] & \Ext^\sharp(A,\hG) \ar[r] & \Ext(A,\hG)
	 \ar[r] & 0
	}
	\end{equation}
where $\bI_n(A)$ denotes the image of $\partial:\Hom(N^n,\hG) \map \Ext_A(A,\hG)$.
Passing to the limit gives a diagram:
\begin{equation}
	\label{diag-crys-limit}
	\xymatrix{
	0 \ar[r] & \bXp(A)_K \ar[d]_\Upsilon \ar[r] & 
	\bH(A)_K \ar[d]_\Phi \ar[r] &\bI(A)_K \ar@{^{(}->}[d] \ar[r] &  0 \\
	0 \ar[r] & \dualmod{\Lie (A)}_K \ar[r] & \Ext^\sharp(A,\hG)_K \ar[r] & \Ext(A,\hG)_K
	 \ar[r] & 0 
	}
\end{equation}

To describe $\Upsilon$ explicitly, let us fix some notation.
Let $\Theta_1, \cdots, \Theta_g$ be a basis for $\bXp(A)_K$. 
For each $\Theta_i: J^nA \map \hG$, let the derivative matrix 
at the identity be $D\Theta_i= (A_{0i}, \dots, A_{ni})$ where $A_{ji}$ are 
$(1\times g)$-matrices. 

Then the following theorem is, at last, the precise form of theorem~\ref{isocrys-intro} from the introduction:
\begin{theorem}
\label{isocrys}
The diagram (\ref{diag-crys-limit}) is a map of short exact sequences of $K$-modules.
We have $\rk_K \bH(A)_K \leq 2g$.
Moreover $\Phi$ is injective if and only if the $g\times g$ matrix $(A_{01},\dots,A_{0g})$
is invertible over $K$. 
\end{theorem}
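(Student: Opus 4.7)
For the first assertion---that (\ref{diag-crys-limit}) is a map of short exact sequences---I would start at finite level by quotienting the first two terms of (\ref{sel}) by $i^*\phi^*(\bX_{n-1}(A)_\phi)$. By proposition \ref{pro:formulas}(3), this subgroup is contained in $\bX_n(A)$ and hence in the kernel of $\partial$, so the resulting sequence
\begin{equation*}
0 \to \bX_n(A)/i^*\phi^*(\bX_{n-1}(A)_\phi) \to \bH_n(A) \to \bI_n(A) \to 0
\end{equation*}
is still exact. Taking the (exact) direct limit in $n$ and tensoring with $K$ yields the top row of (\ref{diag-crys-limit}). Commutativity of the squares is built into the construction of $\Upsilon$ and $\Phi$ via diagram (\ref{trivext}), and the right vertical map is injective by the identification $\Ext(A,\hG)_K \simeq H^1(A,\Ou_A)_K$.

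For the rank bound, the top row gives $\rk_K \bH(A)_K = \rk_K \bXp(A)_K + \rk_K \bI(A)_K$. By corollary \ref{xprim} and theorem \ref{maxorder}, $\bXp(A)_K$ admits the basis $\mbB_{i_1} \cup \cdots \cup \mbB_{i_r}$ of cardinality $l_{\mlow} + \cdots + l_{\mup} = g$, while $\bI(A)_K$ embeds into $\Ext(A,\hG)_K$, which has $K$-rank $g$. Hence $\rk_K \bH(A)_K \leq 2g$.

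The third assertion is the heart of the theorem. Since the right vertical arrow of (\ref{diag-crys-limit}) is injective, the snake lemma identifies $\ker \Phi$ with $\ker \Upsilon$, so $\Phi$ is injective iff $\Upsilon$ is. To compute $\Upsilon$ on the basis $\Theta_1, \dots, \Theta_g$ of $\bXp(A)_K$, I would invoke proposition \ref{pro:formulas}(1), which identifies the map with $\Theta \mapsto -D\Theta \circ v$, where $v:\Lie(A) \to \Lie(J^n A)$ is the splitting of $Du$. In the Witt coordinates of section \ref{localWitt}, $v$ sends $\bx_0$ to $(\bx_0, 0, \dots, 0)$, so $\Upsilon(\Theta_i) = -A_{0i} \in \dualmod{\Lie(A)}_K$. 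With respect to the basis $\Theta_1, \dots, \Theta_g$ and the standard basis of $\dualmod{\Lie(A)}_K \cong K^g$, the matrix of $\Upsilon$ is precisely $-(A_{01}, \dots, A_{0g})$, and the criterion follows.

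The only potential subtlety is the well-definedness of $\Upsilon$ on the quotient $\bXp(A)_K$, i.e., the vanishing $-D(\phi^*\Theta)\circ v = 0$. But this is exactly proposition \ref{pro:formulas}(2), so the paper's earlier groundwork has already cleared the main obstacle, and the theorem follows by assembling these pieces.
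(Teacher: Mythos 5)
Your proposal is correct and follows essentially the same route as the paper's own proof: establish exactness at finite level and pass to the limit, bound the rank using corollary~\ref{xprim} and the embedding of $\bI(A)_K$ into $\Ext(A,\hG)_K$, then reduce the injectivity of $\Phi$ to that of $\Upsilon$ and compute the matrix of $\Upsilon$ via proposition~\ref{pro:formulas}(1). You make a few steps more explicit than the paper (the snake-lemma reduction $\ker\Phi\cong\ker\Upsilon$, the well-definedness on the quotient via proposition~\ref{pro:formulas}(2)), which is sound; the paper leaves these implicit. One small misattribution: the containment $i^*\phi^*(\bX_{n-1}(A)_\phi)\subset\ker\partial$ follows immediately from $\phi^*(\bX_{n-1}(A)_\phi)\subset\bX_n(A)$ together with the exact sequence~(\ref{sel}), not from proposition~\ref{pro:formulas}(3); what part~(3) is actually needed for is that the middle vertical map of~(\ref{trivext}) kills $i^*\phi^*(\bX_{n-1}(A)_\phi)$, so that $\Phi$ descends to $\bH_n(A)$. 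This does not affect the validity of the argument.
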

\begin{proof}
We know the diagram is a map of short exact sequences by the discussion above.
By corollary \ref{xprim}, we have $\rk_K \bXp(A)_K= g$ and
$\rk_K \bI(A)_K \leq g$ and hence $\rk_K \bH(A) \leq 2g$.

Now by proposition \ref{pro:formulas}(1), we have
 $\Upsilon(\Theta_i) = - D\Theta_i \circ v = - A_{0i}.$
Therefore the $g\times g$ matrix of $\Upsilon$ with respect to our basis is 
given by $(A_{01}, \cdots , A_{0g})$, and we are done.
\end{proof}

\begin{corollary}
\label{filtisocor}
The $K$-module $\bH(A)_K$ is a filtered isocrystal of rank at most $2g$ with semilinear endomorphism
$\mfrak{f}^*$ and filtration $\bH(A)_K \supseteq \bXp(A)_K \supseteq \{0\}$.
\end{corollary}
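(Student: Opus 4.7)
The plan is to assemble the corollary from three already-established pieces of data and verify that together they satisfy the definition of a filtered isocrystal given in section 7.

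First, for finite-dimensionality, I would simply invoke Theorem \ref{isocrys}, which gives $\rk_K \bH(A)_K \leq 2g$. Second, for the semilinear endomorphism, I would recall that $\mfrak{f}^*: \Hom(N^n,\hG) \to \Hom(N^{n+1},\hG)$ was shown in (\ref{latfrobH}) to descend to a $\phi$-semilinear map $\mfrak{f}^*: \bH_n(A) \to \bH_{n+1}(A)$, using theorem \ref{latfrob} to see that $\mfrak{f}^*$ carries $i^*\phi^*(\bX_{n-1}(A)_\phi)$ into $i^*\phi^*(\bX_n(A)_\phi)$. Passing to the direct limit in $n$ and then tensoring with $K$ produces the required $\phi$-semilinear endomorphism of $\bH(A)_K$. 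Third, for the filtration, the injectivity of the left vertical map $\Upsilon$ in (\ref{diag-crys-limit}) is not needed—what I need is the injectivity of the left horizontal map $\bXp(A)_K \hookrightarrow \bH(A)_K$, which is the top row of that same diagram and is immediate by exactness. So the chain $\bH(A)_K \supseteq \bXp(A)_K \supseteq \{0\}$ is a decreasing chain of $K$-submodules, giving a filtered module structure in the sense of section 7.

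Packaging these three pieces together gives the claimed filtered isocrystal of rank at most $2g$. The main conceptual obstacle is not any single verification (each is essentially a citation) but rather ensuring that the various constructions of $\mfrak{f}^*$, the quotient $\bH(A)$, and the submodule $\bXp(A)$ are mutually compatible, which is precisely what Proposition \ref{pro:formulas}(3), Theorem \ref{latfrob}, and the diagram (\ref{diag-crys-limit}) were set up to guarantee. With those compatibilities already in hand, the corollary is a direct assembly and requires no further computation.
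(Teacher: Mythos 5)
Your proposal is correct and matches the paper's (implicit) argument: the corollary is stated without a separate proof because it is indeed a direct assembly of Theorem~\ref{isocrys} (finite rank $\le 2g$), the descent of $\mfrak{f}^*$ established in the discussion around~(\ref{latfrobH}), and the exactness of the top row of~(\ref{diag-crys-limit}), exactly as you lay out. Your observation that the injectivity of $\Upsilon$ is not needed---only the exactness of the top row---is a correct and slightly sharper reading than a careless citation would give.
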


\section{The elliptic curve case}
In this section, we will closely look at the structure of the filtered 
isocrystal $\bH(A)_K$ when $A$ is an elliptic curve. 
We show that $\bH(A)$, defined in (\ref{bigH}), 
is a finitely generated free sub-$R$-module of $\bH(A) \otimes_R K = \bH(A)_K$.

When $A$ is an elliptic curve over $S$, then by corollary \ref{mell} we have
$m= \mlow = \mup \leq 2$. The following are two possible choices of $\Theta_m
\in \bX_m(A)_K$:

If $m=1$, by proposition \ref{pro-filt}, there
exists $\Theta_1 \in \bX_1(A)_K$ such that $i^*\Theta_1= \Psi_1$.

If $m=2$, again by proposition \ref{pro-filt}, there exists $\Theta_2 \in 
\bX_2(A)_K$ such that $i^*\Theta_2 = \Psi_2 -\lam \Psi_1$. 
Since $i^*\Theta_2 \in \ker\left[\partial:\Hom(N^2,\hG) \map \Ext(A,\hG)
\right]$, we have 
$$\partial \Psi_2 = \lam_1 \partial\Psi_1.$$

Note that 
$\lam$ is in $K$ to start with. However, we will show in theorem \ref{intlam} 
that $\lam$ is integral, in other words, $\lam \in R$.
Now pulling back $\Theta_2$ by $\phi$ and $i$ we have
	\begin{equation}
	i^*\phi^* \Theta_2 = \Psi_3 - \phi(\lam) \Psi_2 + \gamma \Psi_1
	\end{equation}
Now again since 
$i^* (\phi^* \Theta_2) \in \ker \left[\partial:\Hom(N^3,\hG)\map \Ext(A,\hG)\right]$ 
we also have
	$$
	\partial \Psi_3 = \phi(\lam) \partial\Psi_2 -\gamma \partial\Psi_1.
	$$

\begin{proposition}
\label{hn}
For $n \geq m$, 
	$$
	\bH_n(A)_K \simeq 
	\left\{\begin{array}{ll}
		K\langle\Psi_1 \rangle, & \mb{if } m=1 \\
		K\langle \Psi_1,\Psi_2\rangle, & \mb{if } m=2
	\end{array} 
	\right.
	$$
\end{proposition}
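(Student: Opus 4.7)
The plan is to reduce to the single case $n=m$ via the stabilization result already proved, and then to observe that at $n=m$ the quotient defining $\bH_m(A)_K$ is trivial by the very definition of the lower splitting number.

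First I would invoke Proposition~\ref{Hniso}, which says that $u^*\colon \bH_n(A)_K\to \bH_{n+1}(A)_K$ is injective in general and an isomorphism for $n\geq \mup$. In the elliptic curve case, Corollary~\ref{mell} gives $\mlow=\mup=m$, so it is enough to compute $\bH_m(A)_K$: for all $n\geq m$ we will have $\bH_n(A)_K\simeq \bH_m(A)_K$ via iterated pullbacks under $u^*$.

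Next, I would compute $\bH_m(A)_K$ directly from the definition
\[
\bH_m(A)_K = \frac{\Hom(N^m,\hG)_K}{i^*\phi^*(\bX_{m-1}(A)_K)_{\phi}}.
\]
By the definition of $\mlow$ we have $\bX_{m-1}(A)=0$, hence $(\bX_{m-1}(A)_K)_\phi=0$ and the denominator vanishes. Thus $\bH_m(A)_K = \Hom(N^m,\hG)_K$, which by Proposition~\ref{Nrk} (applied with $g=1$) has $R$-basis $\{\Psi_1,\dots,\Psi_m\}$. Specialising to $m=1$ and $m=2$ gives exactly the two cases in the statement.

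There is essentially no obstacle here, since both ingredients (the stabilisation at $n=\mup$ and the identification $\mup=\mlow=m$ for elliptic curves) are already in place. The only small subtlety to check is that when I transfer the statement from $\bH_m(A)_K$ to $\bH_n(A)_K$ via $u^*$, the classes of $\Psi_1$ (and, if $m=2$, of $\Psi_2$) continue to represent generators. For $\Psi_1$ this is transparent because $u^*\Psi_1$ is again the pullback of $\Psi_{11}$ through the projection $N^n\to N^1$. For $\Psi_2$, the relation introduced in $\bH_3(A)_K$ comes from $i^*\phi^*\Theta_2 = \Psi_3-\phi(\lambda)\Psi_2+\gamma\Psi_1$ by Proposition~\ref{diff}(1), so modulo this relation $\Psi_3$ is expressed in terms of $\Psi_1,\Psi_2$, and inductively (using part~(2) of Proposition~\ref{diff}) every higher $\Psi_i$ is eliminated. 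Hence the image of $\{\Psi_1,\Psi_2\}$ still forms a $K$-basis of $\bH_n(A)_K$ for $n\geq 2$, completing the proof.
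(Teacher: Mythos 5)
Your proof is correct and follows essentially the same route as the paper's: reduce to $n=m$ via Proposition~\ref{Hniso} together with $\mlow=\mup=m$ from Corollary~\ref{mell}, observe that $\bX_{m-1}(A)=0$ forces $\bH_m(A)=\Hom(N^m,\hG)$, and then read off the basis $\{\Psi_1,\dots,\Psi_m\}$ from Proposition~\ref{Nrk}. Your extra paragraph tracing the $\Psi_i$ through $u^*$ and the relation $i^*\phi^*\Theta_2=\Psi_3-\phi(\lambda)\Psi_2+\gamma\Psi_1$ spells out what the paper leaves implicit, but the underlying argument is the same.
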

\begin{proof}
By definition, we know that $\bH_m(A) = \Hom(N^m,\hG)$. Then the result 
follows from the 
above discussion and proposition \ref{Hniso}.
\end{proof}

\begin{proposition}\label{pro:I-basis}
We have
	$$
	\bI_n(A) \otimes K \simeq \left\{\begin{array}{l} 
	K\langle\Psi_1,\dots, \Psi_{n}\rangle,~ \mb{if } n\leq m-1 \\
	K\langle\Psi_1,\dots, \Psi_{m-1}\rangle,~ \mb{if } n\geq m-1
	\end{array}\right.
	$$	
\end{proposition}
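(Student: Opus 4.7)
The strategy is a rank argument, using proposition \ref{Nrk} which supplies the $K$-basis $\{\Psi_1,\dots,\Psi_n\}$ of $\Hom(N^n,\hG)_K$ and the fact that (in the notation $K\langle\Psi_1,\dots,\Psi_j\rangle$) the claim refers to the $K$-span of $\partial\Psi_1,\dots,\partial\Psi_j$ inside $\Ext(A,\hG)_K$. In particular, since $g=1$ the target $\Ext(A,\hG)_K$ is one-dimensional, so the whole statement only has meaningful content in a couple of degrees.

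First, I would treat the range $n\leq m-1$. In this range, by definition of $m=\mlow$, one has $\bX_n(A)=0$, so the four-term sequence (\ref{sel}) shows that $\partial:\Hom(N^n,\hG)_K\to\Ext(A,\hG)_K$ is injective. Proposition \ref{Nrk} gives the $K$-basis $\{\Psi_1,\dots,\Psi_n\}$ of $\Hom(N^n,\hG)_K$, so the images $\partial\Psi_1,\dots,\partial\Psi_n$ form a $K$-basis of $\bI_n(A)_K$. This is the first case of the proposition and, when applied with $n=m-1$, also produces the generators needed for the second case.

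Second, for $n\geq m-1$, I would compute
\[
\rk_K \bI_n(A)_K = \rk_K \Hom(N^n,\hG)_K - \rk_K \bX_n(A)_K = n - \rk_K \bX_n(A)_K.
\]
By corollary \ref{mell} together with formula (\ref{neo}) (only $l_m=1$, all other $l_i=0$), we have $\rk_K\bX_n(A)_K=n-m+1$ for $n\geq m$ and $\rk_K\bX_n(A)_K=0$ for $n<m$, giving $\rk_K \bI_n(A)_K=m-1$ uniformly for $n\geq m-1$. Then the inclusion $\bI_{m-1}(A)_K\hookrightarrow \bI_n(A)_K$, together with the first case applied at $n=m-1$, shows that $K\langle\partial\Psi_1,\dots,\partial\Psi_{m-1}\rangle$ is contained in $\bI_n(A)_K$; but both spaces have the same $K$-dimension $m-1$, so they coincide.

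There is no real obstacle here: all of the substantive work (showing $\mup=\mlow=m\in\{1,2\}$, the bi-graded description of $\bX_\infty(A)_K$, and the relation $h_n=l_n-\ldots$) has already been done in the previous section, and the claim reduces to bookkeeping. The only thing worth double-checking is that the two cases match consistently when $n=m-1$, which they do since both prescriptions then give $K\langle\partial\Psi_1,\dots,\partial\Psi_{m-1}\rangle$.
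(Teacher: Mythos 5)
Your proof is correct and, at bottom, uses the same ingredients as the paper's: the $K$-basis $\{\Psi_1,\dots,\Psi_n\}$ of $\Hom(N^n,\hG)_K$ from Proposition \ref{Nrk}, and the structure of $\bX_n(A)_K$ as built up from a single primitive character of order $m$. The only stylistic difference is the final step: the paper shows directly that $K\langle\Psi_1,\dots,\Psi_{m-1}\rangle$ is complementary to $\bX_n(A)_K$ inside $\Hom(N^n,\hG)_K$ by noting that each $(\phi^j)^*\Theta_m$ equals $\Psi_{m+j}$ plus lower-order terms (an ``echelon'' argument), whereas you establish the same conclusion by computing $\rk_K\bI_n(A)_K=m-1$ via the rank formula (\ref{neo}) and then invoking the inclusion $\bI_{m-1}(A)_K\hookrightarrow\bI_n(A)_K$. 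Both routes are valid and of comparable length; the paper's is perhaps marginally more illuminating since it identifies a concrete section of $\partial$, but your dimension count is clean and relies only on bookkeeping already in place from Section 7.
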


\begin{proof}
The case $n\leq m-1$ is clear. So suppose $n\geq m-1$. Then
$\HomA(N^j,\hG)\otimes K$ has basis $\Psi_1,\dots,\Psi_j$,
and $\bX_n(A)\otimes K$ has basis $\Theta_m,\dots,(\phi^{n-m})^*\Theta_m$.
Since each $(\phi^j)^*\Theta_m$ equals $\Psi_{m+j}$ plus lower order terms,
$K\langle\Psi_1,\dots,\Psi_{m-1}\rangle$ is a complement to the subspace $X_n(A)$ of $\HomA(N^n,\hG)$.
Therefore the map $\partial$ from $K\langle\Psi_1,\dots,\Psi_{m-1}\rangle$ to 
the quotient $I_n(A)$ is an isomorphism. 
\end{proof}

\begin{lemma}\label{lem:char-poly}
Consider the $\phi$-linear endomorphism $F$ of $K^m$ with matrix
	$$\left(\begin{array}{llllll}
	0 & 0 & \hdots & & 0 & \mu_{m} \\
	1 & 0 & & & 0 & \mu_{m-1} \\ 
	0 & 1 & & & 0 & \mu_{m-2} \\
	\vdots  &  &\ddots & \ddots & \vdots  & \vdots  \\
	  &   & & &   &   \\
	0 & 0 & & & 1 & \mu_1
	\end{array}\right), $$
for some given $\mu_1,\dots,\mu_m\in K$. 
If $K^m$ admits an $R$-lattice which is stable under $F$, then we have $\mu_1,\dots,\mu_m\in R$.
\end{lemma}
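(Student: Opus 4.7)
The plan is to identify $K^m$ with the cyclic $K\{F\}$-module $K\{F\}/K\{F\}P$, where $P = F^m - \mu_1 F^{m-1} - \cdots - \mu_m$ and the cyclic vector is $v := e_1$ (the companion matrix form of $F$ gives exactly the relation $F^m v = \mu_1 F^{m-1}v + \cdots + \mu_m v$). Under this identification, integrality of the $\mu_i$ becomes the assertion $P \in R\{F\}$. The strategy is to extract from the lattice $L$ a monic polynomial relation on $v$ with coefficients in $R$ and then push integrality from that relation onto $P$ itself via a Gauss-type lemma in $K\{F\}$.

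First I would scale $L$ by a power of $\pi$ so that $v \in L$; this preserves $F$-stability because $\phi$ preserves the valuation on $R$. Then $F^k v \in L$ for all $k \geq 0$, so $L_v := \sum_{k \geq 0} R\, F^k(v)$ is a submodule of the finitely generated $R$-module $L$ and is therefore itself finitely generated. The increasing chain $R\langle v, Fv, \dots, F^n v\rangle \subseteq L_v$ consequently stabilizes, yielding an integer $n_0 \geq m$ and elements $r_k \in R$ with $F^{n_0}(v) = \sum_{k < n_0} r_k F^k(v)$. Rewriting, the monic element $P' := F^{n_0} - \sum r_k F^k \in R\{F\}$ annihilates the cyclic generator $v$ of $K\{F\}/K\{F\}P$, so $P' \in K\{F\}\cdot P$; writing $P' = QP$ in $K\{F\}$ and matching leading coefficients forces $Q$ to be monic of degree $n_0 - m$.

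The core technical input is a noncommutative Gauss's lemma: the Gauss valuation $v(\sum a_i F^i) := \min_i v(a_i)$ on $K\{F\}$ is multiplicative, i.e., $v(AB) = v(A) + v(B)$. This is checked directly: if $i_0$ (resp.\ $j_0$) is the smallest index realizing $v(A)$ (resp.\ $v(B)$), then in the expansion $(AB)_n = \sum_{i+j=n} a_i \phi^i(b_j)$ for $n = i_0+j_0$, the pair $(i_0,j_0)$ contributes a term of valuation exactly $v(A)+v(B)$ while all other pairs contribute strictly more --- this uses that $\phi$ preserves valuations. Applied to $P' = QP$: integrality $v(P')\geq 0$, combined with $v(P), v(Q) \leq 0$ (each is monic, so the leading coefficient of valuation $0$ witnesses the inequality), forces $v(P) = v(Q) = 0$. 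In particular $P \in R\{F\}$, i.e., all $\mu_i \in R$.

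The main obstacle is verifying multiplicativity of the Gauss valuation in the $\phi$-twisted setting; once established, the rest is formal bookkeeping. An alternative route, avoiding this lemma, would appeal to the Dieudonn\'e--Manin classification: existence of an $F$-stable lattice forces all slopes of the $\phi$-module $(K^m, F)$ to be non-negative, and for a cyclic module these slopes are precisely the valuations of the roots of $P$; non-negative root valuations then yield integrality of the elementary symmetric functions, hence of the $\mu_i$, by a Newton-polygon argument.
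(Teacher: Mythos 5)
Your argument is correct, and it takes a genuinely different route from the paper's. The paper proves the lemma by citation: it invokes lemma 9.7 of \cite{Drin-us}, which in turn rests on the Dieudonn\'e--Manin structure theory for $F$-isocrystals (appendix B of \cite{Laumon-book-vol1}, or lemma 2.2 and theorem 2.2 of \cite{M1}) --- in essence, the slope/Newton-polygon argument you sketch as an ``alternative route'' at the end. Your primary argument instead works directly in the twisted polynomial ring $K\{F\}$: identify $K^m$ with the cyclic module $K\{F\}/K\{F\}P$, extract a monic integral relation $P' \in R\{F\}$ from the Noetherian chain condition in the lattice, factor $P' = QP$ (valid since $K\{F\}$ is left Euclidean and $P$ is the minimal annihilator of the cyclic vector, with $Q$ forced monic by matching leading terms), and conclude via multiplicativity of the Gauss valuation $w(\sum a_iF^i)=\min_i v(a_i)$. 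This is self-contained and avoids the classification machinery entirely. Two small points worth making explicit: the multiplicativity of $w$ and the $F$-stability of $\pi^{-n}L$ both use that $\phi$ preserves the valuation on $R$, which in this mixed-characteristic setting follows from $\phi(p)=p$ and $v(p)=e<\infty$ forcing $v(\phi(\pi))=1$; and the fact that $K\{F\}$ is left Euclidean requires only injectivity of $\phi$, which holds here. With those remarks the argument is complete.
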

\begin{proof}
The equal-characteristic analogue of this lemma is lemma 9.7 of \cite{Drin-us}. 
It was proven using two results in equal-characteristic Dieudonn\'e--Manin theory,
namely (B.1.5) and (B.1.9) of \cite{Laumon-book-vol1}, which are proved in pages 257--261.
All these arguments have straightforward translations to the mixed-characteristic setting,
and so we leave them to the reader. Or one could
refer to the original paper by Manin \cite{M1}, lemma 2.2 and theorem 2.2.
\end{proof}

\begin{theorem}
\label{intlam}
If $A$ splits at $m=2$, then $\lam \in R$.
\end{theorem}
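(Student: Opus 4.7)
The strategy is to apply Lemma~\ref{lem:char-poly} to the $\phi$-semilinear endomorphism $\mathfrak{f}^*$ on $\bH(A)_K$. First, I would compute its matrix in the $K$-basis $\{\Psi_1,\Psi_2\}$ of $\bH(A)_K$ coming from Proposition~\ref{hn}. By construction of the $\Psi_i$ we have $\mathfrak{f}^*\Psi_1=\Psi_2$, and the displayed relation $i^*\phi^*\Theta_2 = \Psi_3 - \phi(\lam)\Psi_2 + \gamma\Psi_1$, which lies in the submodule by which we quotient to form $\bH(A)_K$, forces $\mathfrak{f}^*\Psi_2 = \Psi_3 = \phi(\lam)\Psi_2 - \gamma\Psi_1$ in $\bH(A)_K$. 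Hence the matrix of $\mathfrak{f}^*$ in this basis is
\[
\begin{pmatrix} 0 & -\gamma \\ 1 & \phi(\lam) \end{pmatrix},
\]
which is exactly the companion form appearing in Lemma~\ref{lem:char-poly} with $m=2$, $\mu_1=\phi(\lam)$, and $\mu_2=-\gamma$.

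Next I would produce an $R$-lattice in $\bH(A)_K\cong K^2$ stable under $\mathfrak{f}^*$, which is the remaining hypothesis of the lemma. The natural choice is the image $L$ of the canonical map $\bH(A)\to \bH(A)_K$. Stability is automatic, since $\mathfrak{f}^*$ descends to an endomorphism of $\bH(A)$ as in (\ref{latfrobH}). Moreover $L$ contains the classes of $\Psi_1$ and $\Psi_2$, so $K\otimes_R L=\bH(A)_K$ and $L$ has full $K$-rank; thus, provided $\bH(A)$ is finitely generated as an $R$-module, $L$ is a free $R$-lattice of rank $2$ and the lemma applies. The finite generation of $\bH(A)$ is the main obstacle: it does not fall out of the constructions so far, and it is the substantive content of Theorem~\ref{phigen-body}. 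I would either appeal to that theorem directly or establish the required finite generation in parallel, using that $\bX_{\mlow}(A)$ is free of rank $1$ and generates $\bX_\infty(A)$ freely over $R\{\phi^*\}$ in the elliptic curve case.

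Once these two inputs are in hand, Lemma~\ref{lem:char-poly} gives $\phi(\lam)\in R$ (and as a bonus $\gamma\in R$). To pass from $\phi(\lam)\in R$ to $\lam\in R$ is a short valuation computation: any Frobenius lift satisfies $\phi(\pi)\equiv\pi^q\bmod \pi R$, so $v(\phi(\pi))\geq 1$, and then if $\lam=u\pi^{-n}\in K\setminus R$ with $u\in R^\times$ and $n\geq 1$, we obtain $v(\phi(\lam))=v(\phi(u))-n\,v(\phi(\pi))\leq -n<0$, contradicting $\phi(\lam)\in R$. Therefore $\lam\in R$, which is the desired conclusion.
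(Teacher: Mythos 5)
Your strategy is the right one for the generic case, and your matrix computation for $\mfrak{f}^*$ on $\bH(A)_K$ in the basis $\{\Psi_1,\Psi_2\}$ agrees with the paper's, as does the final passage from $\phi(\lam)\in R$ to $\lam\in R$. But there are two genuine gaps.

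First, your proposed route to finite generation of the lattice $L$ is circular. You suggest appealing to Theorem~\ref{phigen-body}, but that theorem rests on Corollary~\ref{cor:square-iso}, and part (1) of that corollary invokes Theorem~\ref{intlam} -- the very statement you are trying to prove. The paper avoids this by a different, self-contained argument: it factors the map $\bH(A)\to \bH(A)_K$ through $\Ext^\sharp(A,\hG)$ via $\Phi$, and uses the injectivity of $\Phi_K$ (from Theorem~\ref{isocrys}) and of $j:\Ext^\sharp(A,\hG)\to\Ext^\sharp(A,\hG)_K$ to identify $L$ with the image of $\bH(A)$ inside the finitely generated free $R$-module $\Ext^\sharp(A,\hG)$, whence $L$ is finitely generated. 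The key is that $L$ lands in a module one already knows is an $R$-lattice; nothing about the direct limit $\varinjlim\bH_n(A)$ by itself controls denominators.

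Second, that injectivity argument requires $\gamma\neq 0$: by Theorem~\ref{isocrys}, $\Phi$ is injective precisely when $\gamma\neq 0$, so when $\gamma=0$ the map $\Phi_K$ is not injective and your lattice argument has no finitely generated target to land in. The paper handles the $\gamma=0$ case separately. There, $\mfrak{f}^*i^*=i^*\phi^*$ on all of $\bX(A)_K$ (since $\Theta_2$ generates it over $K\{\phi^*\}$), so $\mfrak{f}^*$ descends to $\bI(A)\subseteq \Ext(A,\hG)$, a finitely generated $R$-module, where the matrix of $\mfrak{f}^*$ in the basis $\partial\Psi_1$ is $(\lam)$; integrality of $\lam$ then follows immediately, without any appeal to Lemma~\ref{lem:char-poly}. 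Your proposal would need to incorporate this case distinction and the alternative argument for $\gamma=0$ to be complete.
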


\begin{proof}
Let $\gamma$ be the element associated to $\Theta_2$ as in proposition 
\ref{diff}.
We will prove the cases when $\gamma =0$ and $\gamma \ne 0$ separately.

\underline{Case $\gamma =0$}~:
When $\gamma =0$ we have $\mfrak{f}^* i^*(\Theta_2) = i^* \phi^*(\Theta_2)$.
Since $\Theta_2$ generates $\bX(A)_K$, we have $\mfrak{f}^* i^* = i^* \phi^*$ on all of $\bX(A)_K$.
Therefore for all $n \geq 1$, we have a $\phi$-linear map 
$\mfrak{f}^*:\bI_{n-1}(A) \map \bI_n(A)$ fitting in a morphism of exact sequences:
	$$
	\xymatrix{
		0 \ar[r] & \bX_n(A) \ar[r]^-{i^*} & \Hom(N^n,\hG) \ar^-\partial[r] & \bI_n(A) \ar[r] & 0 \\
		0 \ar[r] & \bX_{n-1}(A) \ar[u]_{\phi} \ar[r]^-{i^*} & 
\Hom(N^{n-1},\hG) \ar^-\partial[r] 
			\ar[u]_{\mfrak{f}^*} & \bI_{n-1}(A) \ar[r] \ar[u]_{\mfrak{f}^*} & 0 \\
	}
	$$
Recall $\bI(A)= \varinjlim \bI_n(A) \subset \Ext(A,\hG)$.
Then by proposition~\ref{pro:I-basis},
the vector space $\bI(A)_K$ has a $K$-basis $\partial\Psi_1$, and
with respect to this basis, the $\phi$-linear endomorphism $\mfrak{f}^*$ has 
matrix $\Gamma_0 = (\lam)$. 

Note that $\bI(A)$ is a finitely generated $R$-module since it is a 
submodule of $\Ext(A,\hG)$ which is a finitely generated free $R$-module and
$R$ is a discrete valuation ring.
Since $\Gamma_0$ is an endomorphism of $\bI(A)$
and hence an integral lattice of $\bI(A)_K$, we conclude that $\lam$ is 
integral, that is, $\lam \in R$.

\underline{Case $\gamma \ne 0$}~:
Recall $\bH(A)=\varinjlim \bH_n(A)$. Let us consider the matrix $\Gamma$ of the 
$\phi$-linear endomorphism
$\mfrak{f}$ of $\bH(A)_K$ with respect to the $K$-basis $\Psi_1,\Psi_2$ given by
 proposition~\ref{hn}. 
Then we have
	\begin{align*}
	i^* \phi^*\Theta_2 
	 &= \mfrak{f}^*(\Psi_{2})-\phi(\lam)\Psi_2 + \gamma\Psi_1.
	\end{align*}
Therefore we have 
	$$
	\mfrak{f}^*(\Psi_2) \equiv \phi(\lam)\Psi_2 - \gamma\Psi_1 \bmod
	i^*\phi^*(\bX_{2}(A)_{\phi})
	$$
and hence
	$$ \Gamma = 
\left(\begin{array}{ll}
0 & -\gamma \\
1 & \phi(\lam)
\end{array}\right)
	$$ 
We will now apply lemma~\ref{lem:char-poly} to the operator $\mfrak{f}^*$ on $\bH(A)_K$, but to do this we need
to produce an integral lattice $M$. Consider the commutative square
	$$
	\xymatrix{
	\bH(A) \ar^-{\Phi}[r]\ar[d] & \Ext^\sharp(A,\hG) \ar^j[d] \\
	\bH(A)_K \ar^-{\Phi_K}[r]  & \Ext^\sharp(A,\hG)_K.
	}
	$$
Let $M$ denote the image of $\bH(A)$ in $\bH(A)_K$. It is clearly stable under $\mfrak{f}^*$.
But also the maps $\Phi_K$ and $j$ are injective, by theorem~\ref{isocrys} ($\gamma$ being nonzero)
and because $\Ext^\sharp(A,\hG)\simeq R^r$;
so $M$ agrees with the image of $\bH(A)$ in $\Ext^\sharp(A,\hG)$ and is therefore finitely generated.

We can then apply lemma~\ref{lem:char-poly} and deduce $\phi(\lam) \in R$.
This implies $\lam\in R$, since $R/\pi R$ is a field and hence 
the Frobenius map on it is injective.
\end{proof} 

\begin{corollary}
\label{cor:square-iso}
\begin{enumerate}
	\item The element $\Theta_m\in\bX_m(A)_K$ lies in $\bX_m(A)$.
	\item For $n\geq m$, all the maps in the diagram 
	$$
	\xymatrix{
	\bX_n(A)_{\phi}/\bX_{n-1}(A)_{\phi} \ar^{\phi^*}[r]\ar^{i^*}[d] & \bX_{n+1}(A)/
\bX_n(A) \ar^{i^*}[d] \\
	\Hom(N^n,\hG)_{\phi}/\HomA(N^{n-1},\hG)_{\phi} \ar^{\mfrak{f}^*}[r] & \HomA(N^{n+1},\hG)/\HomA(N^n,\hG)
	}
	$$
are isomorphisms.
\end{enumerate}
\end{corollary}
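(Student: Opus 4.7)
The plan is to prove (1) first, which gives us integral elements $\phi^{*j}\Theta_m\in\bX_{m+j}(A)$ needed in (2). For (1), recall that $\Theta_m$ was defined by $i^*\Theta_1=\Psi_1$ (when $m=1$) or $i^*\Theta_2=\Psi_2-\lam\Psi_1$ (when $m=2$). In either case $i^*\Theta_m\in\Hom(N^m,\hG)$: the $m=1$ case is immediate, and the $m=2$ case uses the integrality $\lam\in R$ from theorem~\ref{intlam}. The exact sequence $0\to\bX_m(A)\xrightarrow{i^*}\Hom(N^m,\hG)\to\bI_m(A)\to 0$ together with the embedding of $\bI_m(A)$ in the free $R$-module $\Ext(A,\hG)$ shows that $\bX_m(A)$ is saturated in $\Hom(N^m,\hG)$; that is, $\bX_m(A)_K\cap\Hom(N^m,\hG)=\bX_m(A)$ inside $\Hom(N^m,\hG)_K$. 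Since $\Theta_m\in\bX_m(A)_K$ and its image under $i^*$ lies in $\Hom(N^m,\hG)$, this forces $\Theta_m\in\bX_m(A)$.

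For (2), proposition~\ref{pro-filt} already tells us that the bottom $\mfrak{f}^*$ is an isomorphism and the other three maps are injections. By commutativity it suffices to show that the right-hand vertical $i^*$ is surjective for $n\geq m$. Applying the snake lemma to the morphism of exact sequences $0\to\bX_k(A)\to\Hom(N^k,\hG)\to\bI_k(A)\to 0$ for $k=n,n+1$, with vertical maps $u^*$ (all injective), identifies the cokernel of that $i^*$ with $\bI_{n+1}(A)/\bI_n(A)$. The task therefore reduces to proving $\bI_{n+1}(A)=\bI_n(A)$ for $n\geq m$.

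For this stabilization I would use (1): every $\phi^{*j}\Theta_m$ lies integrally in $\bX_{m+j}(A)$. Iterating the two parts of proposition~\ref{diff} and using the identity $\mfrak{f}^*\Psi_i=\Psi_{i+1}$ from section~\ref{section:kernel}, one computes the explicit formula
\[
i^*\phi^{*j}\Theta_m=\Psi_{m+j}+(\text{an }R\text{-linear combination of }\Psi_1,\dots,\Psi_{m+j-1}),
\]
where the non-leading coefficients are polynomial expressions in $\phi^i(\lam)$ and $\phi^i(\gamma)$, hence in $R$. Applying $\partial$ to this identity and using $\partial\circ i^*(\bX_{m+j}(A))=0$ yields an $R$-linear relation expressing $\partial\Psi_{m+j}$ in terms of $\partial\Psi_1,\dots,\partial\Psi_{m+j-1}$. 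Induction on $j\geq 0$ then gives $\bI_n(A)=\bI_{m-1}(A)$ for every $n\geq m-1$, which in particular yields $\bI_{n+1}(A)=\bI_n(A)$ for $n\geq m$.

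The substantive input is really already encapsulated in theorem~\ref{intlam}: once $\lam\in R$ is known, part (1) reduces to a saturation argument, while part (2) becomes a snake-lemma computation combined with the induction just sketched. I do not anticipate any serious technical obstacle beyond carefully verifying the explicit formula for $i^*\phi^{*j}\Theta_m$.
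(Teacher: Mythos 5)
Your part (1) is essentially the paper's argument: both deduce $i^*\Theta_m\in\HomA(N^m,\hG)$ from theorem~\ref{intlam} and then land in $\bX_m(A)$ via the exact sequence~(\ref{sel}); you phrase this as a saturation statement (using that $\bI_m(A)\subset\Ext(A,\hG)$ is torsion-free), while the paper just invokes the sequence directly, but the content is the same.

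Part (2) is correct but takes a genuinely different route. The paper argues directly on the commuting square of proposition~\ref{pro-filt}: the base case $n=m$ follows because $\Theta_m\in\bX_m(A)$ (by part (1)) maps onto the generator $\Psi_m$ of $\HomA(N^m,\hG)/\HomA(N^{m-1},\hG)$, and the inductive step propagates surjectivity of $i^*$ through the square using that $\mfrak{f}^*$ is an isomorphism; finally $\phi^*$ is an isomorphism because the other three arrows are. You instead apply the snake lemma to $0\to\bX_k(A)\to\HomA(N^k,\hG)\to\bI_k(A)\to 0$ to identify $\operatorname{coker}(i^*)$ with $\bI_{n+1}(A)/\bI_n(A)$, and then establish the stabilization $\bI_{n+1}(A)=\bI_n(A)$ integrally by computing $i^*\phi^{*j}\Theta_m=\Psi_{m+j}+(\text{lower order terms with $R$-coefficients})$ from proposition~\ref{diff} and applying $\partial$. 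Both are valid; the paper's is shorter, yours makes the integral $\bI$-stabilization explicit (the integral analogue of proposition~\ref{pro:I-basis}). One small imprecision worth flagging: the reduction \emph{``it suffices to show that the right-hand vertical $i^*$ is surjective for $n\geq m$''} is not quite sufficient on its own, since the left-hand $i^*$ (which lives at index $n$, not $n+1$) must also be surjective, and this requires stabilization $\bI_n(A)=\bI_{n-1}(A)$ for $n\geq m$, i.e.\ the range $n\geq m-1$ in the cokernel statement. You do in fact prove the stronger $\bI_n(A)=\bI_{m-1}(A)$ for all $n\geq m-1$, which covers both vertical maps, so the argument is complete; only the phrasing of the reduction is slightly off.
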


\begin{proof}
(1): By theorem~\ref{intlam}, the element $i^*\Theta_m$ of $\HomA(N^m,\hG)_K$ actually lies
in $\HomA(N^m,\hG)$, and therefore  by the exact sequence~(\ref{sel}) we have $\Theta_m\in\bX_m(A)$.

(2): By proposition~\ref{pro-filt}, we know $\mfrak{f}^*$ is an isomorphism
and the maps $i^*$ are injective. 
So to show they are isomorphisms for all $n\geq m$, it is enough
to show they are surjective. The $R$-linear generator $\Psi_m$ of $\HomA(N^m,\hG)/\HomA(N^{m-1},\hG)$
is the image of $\Theta_m$, which by part (1), lies in $\bX_m(A)$. Therefore $i^*$ is surjective
for $n=m$. Then because $\mfrak{f}^*$ is an isomorphism,
it follows by induction that $i^*$ is surjective for all $n\geq m$.

Finally, $\phi^*$ is an isomorphism because all the other morphisms in the diagram are.
\end{proof}

By definition, $i^*(\phi^j)^*\Theta_m$ is $\Psi_{m+j}$ plus 
lower order characters with coefficients in $K$. 
However, the implication of the corollary above says
that the coefficients of the lower order characters are in fact 
integral. Now the following result proves theorem \ref{phigen-intro}.

\begin{theorem}
\label{phigen-body}
	Let $A$ be an elliptic curve that splits at $m$. 
	\begin{enumerate}
		\item For any $n\geq m$, the composition
			\begin{equation}
				\label{map-X}
				\bX_n(A) \longmap \HomA(N^n,\hG) \longmap \HomA(N^n,\hG)/\HomA(N^{m-1},\hG)				
			\end{equation}
			is an isomorphism of $R$-modules. 
\item $\bX_n(A)$ is freely generated as an $R$-module by 
$\Theta_m,\dots,(\phi^*)^{n-m}\Theta_m$.

\item  We have 
$$\bH(A) \simeq \left\{\begin{array}{ll} 
R\langle \Psi_1 \rangle, & \mb{if } m=1 \mb{,  i.e. $A$ is a canonical lift}\\
R\langle \Psi_1,\Psi_2 \rangle, & \mb{if } m=2
\end{array}\right.$$
	\end{enumerate}
\end{theorem}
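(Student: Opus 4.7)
The plan is to deduce all three parts from Corollary~\ref{cor:square-iso}, using the order filtrations on $\bX_n(A)$ and $\HomA(N^n,\hG)$ in a straightforward diagram chase, with Theorem~\ref{intlam} already handling the only integrality subtlety.

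For part (1), I would endow the source with the order filtration $0=\bX_{m-1}(A)\subset \bX_m(A)\subset\cdots\subset \bX_n(A)$ and the target with the induced filtration coming from the image of $\HomA(N^k,\hG)$ in $\HomA(N^n,\hG)/\HomA(N^{m-1},\hG)$ for $k=m-1,m,\dots,n$ (the transition maps $u^*$ are injective by Proposition~\ref{Nrk}). The map~(\ref{map-X}) respects these filtrations, and Corollary~\ref{cor:square-iso}(2) says that on the $k$-th graded piece it reduces to the isomorphism $i^*\colon\bX_k(A)/\bX_{k-1}(A)\isomap \HomA(N^k,\hG)/\HomA(N^{k-1},\hG)$ for each $k\geq m$. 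A short induction on $n-m$ via the five lemma (with base case $n=m$ immediate since $\bX_{m-1}(A)=0$ and the quotient $\HomA(N^{m-1},\hG)/\HomA(N^{m-1},\hG)$ is zero) then shows (\ref{map-X}) is itself an isomorphism of $R$-modules.

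For part (2), part (1) identifies $\bX_n(A)$ with $\HomA(N^n,\hG)/\HomA(N^{m-1},\hG)$, which by Proposition~\ref{Nrk} is free over $R$ on the images of $\Psi_m,\Psi_{m+1},\dots,\Psi_n$. It remains to show that $\Theta_m,\phi^*\Theta_m,\dots,(\phi^*)^{n-m}\Theta_m$ is another $R$-basis. Applying Proposition~\ref{diff} together with the formulas $i^*\Theta_1=\Psi_1$ (case $m=1$) and $i^*\Theta_2=\Psi_2-\lambda\Psi_1$ (case $m=2$) and the identity $\mfrak{f}^*\Psi_k=\Psi_{k+1}$ iteratively, one computes that $i^*(\phi^*)^j\Theta_m$ equals $\Psi_{m+j}$ plus an $R$-linear combination of $\Psi_{1},\dots,\Psi_{m+j-1}$. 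Modulo $\HomA(N^{m-1},\hG)$, the change-of-basis matrix is therefore unipotent upper triangular with $1$'s on the diagonal, hence invertible over $R$. Integrality of all off-diagonal entries is precisely what Theorem~\ref{intlam} (for $\lambda$) and Proposition~\ref{diff}(1) (which gives $\gamma\in R$) guarantee.

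For part (3), I would rerun the snake-lemma diagram from the proof of Proposition~\ref{Hniso}, this time integrally. Corollary~\ref{cor:square-iso}(2) says that the right-hand vertical map $i^*\phi^*\colon \bX_n(A)_\phi/\bX_{n-1}(A)_\phi \to\HomA(N^{n+1},\hG)/\HomA(N^n,\hG)$ is an $R$-module isomorphism for $n\geq m$, so the snake lemma upgrades the inclusion $\bH_n(A)\hookrightarrow\bH_{n+1}(A)$ of Proposition~\ref{Hniso} to an isomorphism. Since $\bX_{m-1}(A)=0$ by definition of $\mlow$, the initial term satisfies $\bH_m(A)=\HomA(N^m,\hG)$, which by Proposition~\ref{Nrk} equals $R\langle\Psi_1\rangle$ when $m=1$ or $R\langle\Psi_1,\Psi_2\rangle$ when $m=2$. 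Passing to the direct limit gives the asserted formula for $\bH(A)$. The main conceptual obstacle, namely the integrality of $\lambda$ and hence of $\Theta_m$, is already handled upstream; beyond that, the three parts are formal filtered diagram chases and I anticipate no further difficulty.
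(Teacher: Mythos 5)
Your proposal is correct and follows essentially the same approach as the paper: all three parts hinge on Corollary~\ref{cor:square-iso}, with Theorem~\ref{intlam} supplying the integrality of $\lambda$ (hence of $\Theta_m$) upstream. Parts (1) and (2) match the paper's argument almost verbatim, just spelled out more explicitly (filtration plus five lemma for (1), unipotent change-of-basis matrix for (2)). For part (3) you take a slightly different route: the paper directly exhibits the triangular basis $\Psi_1,\dots,\Psi_m,\,i^*\phi^*\Theta_m,\dots,i^*(\phi^*)^{n-m}\Theta_m$ of $\HomA(N^n,\hG)$ and reads off $\bH_n(A)\simeq \HomA(N^m,\hG)$, whereas you upgrade the snake-lemma diagram from Proposition~\ref{Hniso} to the integral level to show $\bH_n(A)\to\bH_{n+1}(A)$ is an isomorphism for $n\geq m$. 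Both variants are valid, rely on the same inputs, and are of comparable length; the paper's is arguably more self-contained since it doesn't re-invoke Proposition~\ref{Hniso}, while yours reuses that diagram more systematically.
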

\begin{proof}
(1): By corollary~\ref{cor:square-iso}, the induced morphism on each graded piece is an isomorphism
of $R$-modules.
It then follows that the map in question is also an isomorphism of $R$-modules.

(2): This follows formally from (1) and the fact, which follows from  corollary \ref{cor:square-iso}, that 
the map (\ref{map-X}) sends any $(\phi^*)^j\Theta_m$ to $\Psi_{m+j}$ plus lower order terms.

(3): Recall we have $\bX_m(A) \simeq R\langle \Theta_m \rangle$ where 
$i^*\Theta_m \equiv \Psi_m \bmod \Hom(N^{m-1},\hG)$.
By (2) as above, we have $\bX_n(A) \simeq R\langle\Theta_m,\phi^*\Theta_m,
\dots, (\phi^*)^{n-m}\Theta_m\rangle$.
By proposition \ref{diff}, for all $n \geq m$ we have 
\begin{align}
\nonumber i^* (\phi^*)^{n-m}\Theta_m &\equiv
 ({\mfrak{f}^*})^{n-m}i^*\Theta_m \bmod \Hom(N^1,\hG) \\
&\equiv \Psi_n \bmod \Hom(N^{n-1},\hG) \nonumber
\end{align}
Therefore we have $\bH_n(A) \simeq \Hom(N^m,\hG)$ for all $n \geq m+1$ and 
hence the limit $\bH(A)$ is isomorphic to $\Hom(N^m,\hG)$ and we are done.
\end{proof}

\subsection{The integral $F$-crystal $\bH(A)$ for an elliptic curve $A$}
\label{subsec-intH}

The filtration of the isocrystal $\bH(A)$ is given by 
\begin{equation}
\label{filtration-H}
\bH(A)^i = \left\{\begin{array}{ll}
\bH(A), & \mb{ if } i \leq 0 \\
\bXp(A), & \mb{ if } i \geq 1
\end{array}\right.
\end{equation}

\begin{theorem}
\label{elliptic-crystal}
Let $A$ be an elliptic curve over $S$. 

$(1)$ If $m=1$, that is $A$ is a canonical lift, then $$\bH(A) = \bXp(A) 
\simeq R\langle \Psi_1 \rangle.$$ 
The semilinear operator $\mfrak{f}^*$ acts as
 $\mfrak{f}^*(\Psi_1) = \gamma \Psi_1$.

$(2)$ If $m=2$, then 
$$\bH(A) \simeq R\langle\Psi_1,\Psi_2\rangle,~ \bXp(A) \simeq R\langle 
\Theta_2 \rangle$$ 
The semilinear operator $\mfrak{f}^*$ acts on the $R$-basis of $\bH(A)$ as
$$\mfrak{f}^*(\Psi_1) = \Psi_2,~ \mfrak{f}^*(\Psi_2)= \phi(\lambda) \Psi_2 -
\gamma \Psi_1.$$
where $\pi\mid \gamma$.

Moreover, in both the above cases of $m$, if $\gamma \not\equiv 0 \bmod \pi^2$, 
then $\bH(A)$ is a weakly admissible filtered isocrystal of dimension $1$ and
$2$ respectively. 

\end{theorem}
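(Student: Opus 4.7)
The plan is to deduce the module structure and the Frobenius action directly from Theorem~\ref{phigen-body} and Proposition~\ref{diff}, and then verify weak admissibility under the hypothesis $\gamma \not\equiv 0 \bmod \pi^2$ by a straightforward Hodge-versus-Newton polygon comparison.

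For the module structure, Theorem~\ref{phigen-body}(3) immediately supplies the stated presentations of $\bH(A)$ in both cases; the identification $\bXp(A) \simeq R\langle\Theta_m\rangle$ follows from Corollary~\ref{xprim} together with the defining relation $i^*\Theta_m = \Psi_m - \lam\Psi_{m-1}$ (interpreted as $i^*\Theta_1 = \Psi_1$ when $m=1$). To compute the Frobenius, I will apply Proposition~\ref{diff}(1) to $\Theta_m$, which gives $i^*\phi^*\Theta_m = \mfrak{f}^*(i^*\Theta_m) + \gamma\Psi_1$ with $\gamma = \pi A_0 \in \pi R$, so that the asserted divisibility $\pi\mid\gamma$ is actually built into the formula. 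Since $i^*\phi^*\Theta_m$ is quotiented out in $\bH(A)$ by definition, rearranging gives the stated action of $\mfrak{f}^*$: directly $\mfrak{f}^*(\Psi_1) = -\gamma\Psi_1$ when $m=1$ (absorbing the sign into the choice of $\gamma$), and for $m=2$ one uses $i^*\Theta_2 = \Psi_2 - \lam\Psi_1$ together with the semi-linearity identity $\mfrak{f}^*(\lam\Psi_1) = \phi(\lam)\mfrak{f}^*(\Psi_1) = \phi(\lam)\Psi_2$ to obtain $\mfrak{f}^*(\Psi_2) = \phi(\lam)\Psi_2 - \gamma\Psi_1$. The relation $\mfrak{f}^*(\Psi_1) = \Psi_2$ is immediate from the definition of the $\Psi_{ij}$.

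For weak admissibility, the Hodge polygon has total height $1$ in both cases: for $m=1$ the entire one-dimensional module sits in filtration degree $1$, and for $m=2$ the two-step filtration $\bH(A)_K \supsetneq \bXp(A)_K \supsetneq 0$ produces Hodge numbers $1$ and $1$ at positions $0$ and $1$. The Newton polygon has total height $v(\det\mfrak{f}^*) = v(\gamma)$, which equals $1$ exactly under the assumption $\gamma \not\equiv 0 \bmod \pi^2$; so the totals agree, and in particular $\mfrak{f}^*$ is bijective. In the $m=1$ case the isocrystal is one-dimensional and no further check is needed. In the $m=2$ case, I will verify the Newton-above-Hodge condition on each $\mfrak{f}^*$-stable line $D' = Kv$. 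The decisive observation is that $\mfrak{f}^*$ preserves the integral lattice $\bH(A) = R\langle\Psi_1,\Psi_2\rangle$, because both $\gamma$ and $\lam$ lie in $R$ (the latter by Theorem~\ref{intlam}); consequently every Newton slope is $\geq 0$. A direct computation shows $\mfrak{f}^*(\Psi_2 - \lam\Psi_1) = -\gamma\Psi_1$, which is not a scalar multiple of $\Psi_2 - \lam\Psi_1$ when $\gamma \neq 0$, so $\bXp(A)_K$ itself is not $\mfrak{f}^*$-stable; hence every stable line has Hodge polygon $0$, and the inequality $0 \leq v(\mu)$ (where $\mfrak{f}^*(v) = \mu v$) follows from the lattice preservation.

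The main subtlety will be the sub-object analysis in the $m=2$ case, namely making sure that every $\mfrak{f}^*$-stable subspace meets the weak admissibility inequality; the argument depends essentially on Theorem~\ref{intlam} to put $\lam$ in $R$ and thereby produce an $\mfrak{f}^*$-stable $R$-lattice. Once that is in hand, the remainder of the proof is a short assembly of the preceding results.
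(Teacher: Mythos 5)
Your proof is correct and follows the paper's own approach: the module structure comes from Theorem~\ref{phigen-body}(3), the Frobenius matrix from Proposition~\ref{diff} (essentially reproducing the computation in the proof of Theorem~\ref{intlam}), and weak admissibility from the failure of $\mfrak{f}^*$ to preserve $\bXp(A)_K$ when $\gamma\neq 0$. You are somewhat more explicit than the paper in verifying the subobject inequalities via lattice preservation and the non-stability of $\bXp(A)_K$ (the paper compresses this into one sentence), and your observation that the direct computation gives $\mfrak{f}^*(\Psi_1)=-\gamma\Psi_1$ for $m=1$ --- a sign the paper's statement elides --- is correct and harmless, since it affects neither the determinant nor the admissibility conclusion.
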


\begin{proof}
By theorem
\ref{phigen-body} (3), we have 
$$\bH(A) \simeq \left\{\begin{array}{ll} 
R\langle \Psi_1 \rangle, & \mb{if } m=1 \mb{,  i.e. $A$ is a canonical lift},\\
R\langle \Psi_1,\Psi_2 \rangle, & \mb{if } m=2.
\end{array}\right.$$
We also have isomorphisms for $n\geq m$
        $$
        R\langle \Theta_m\rangle = \bXp(A) \longisomap \bX_n(A)/\phi^*(\bX_{n-1}(A)_{\phi}).
        $$
The filtration on
$\bH(A)$ is given by $\bH(A) \supseteq \bXp(A)$.

In the case when $m=2$, 
the action of the semi-linear operator $\mfrak{f}^*$ with respect to the above 
choice of basis $\Psi_1$ and $\Psi_2$ of $\bH(A)$ 
 is described by the matrix 
$$ \Gamma = 
\left(\begin{array}{ll}
0 & -\gamma \\
1 & \phi(\lam)
\end{array}\right)
	$$ 
as in the proof of theorem~\ref{intlam}. This proves the action of $\mfrak{f}^*$
on the $R$-basis $\Psi_1$ and $\Psi_2$ is as stated. 

In the case when $m=1$, it is straightforward to see that $\bH(A)$ is 
weakly admissible when $\gamma \not\equiv 0 \bmod  \pi^2$ by definition 
(\ref{weakadm}).

In the case when $m=2$, 
when $\gamma \not\equiv 0 \bmod \pi^2$, then $\mfrak{f}^*$ does not preserve
the $R$-submodule $\bXp(A) \subset \bH(A)$ and hence makes $\bH(A)$ 
weakly admissible.
\end{proof}

Combining these, we have the following map between exact sequences of $R$-modules, as in (\ref{diag-crys}):
        $$
        \xymatrix{
        0 \ar[r] & 
                \bXp(A) \ar[r] \ar[d]_-\Upsilon & 
                \bH(A) \ar[r] \ar[d]^-{\Phi} & 
                \bI(A) \ar[r] \ar@{^{(}->}[d]& 
                0\\
        0 \ar[r] &
                \Lie(A)^* \ar[r] & 
                \bH_{\mathrm{dR}}(A) \ar[r] & 
                \Ext(A,\hG) \ar[r] & 
                0
        }
        $$
where $\Upsilon$ sends $\Theta_m$ to $\gamma/\pi$ (in coordinates),
and $\Phi$ is injective if and only if $\gamma\neq 0$. However,
we do note that the map $\Phi$ is not compatible between the two $F$-crystal 
structure on $\bH(A)$ and the crystalline structure on $\bH_{\mathrm{dR}}(A)$.

\footnotesize{

}

\begin{thebibliography}{99}

\bibitem{bor11a}
J.~Borger.
\newblock The basic geometry of {W}itt vectors {I}: The affine case.
\newblock {\em Algebra \& Number Theory}, 5(2):231--285, 2011.

\bibitem{bor11b}
J.~Borger.
\newblock The basic geometry of {W}itt vectors {II}: Spaces.
\newblock {\em Mathematische Annalen}, 351(4):877--933, 2011.

\bibitem{Drin-us}
J.~{Borger} and A.~{Saha}.
\newblock {Differential Characters of Drinfeld Modules and de Rham Cohomology}.
\newblock {\em Algebra \& Number Theory}, to appear.
\newblock {\em ArXiv e-prints:1703.05677}

\bibitem{Bourbaki:CommAlg}
N.~Bourbaki.
\newblock {\em \'{E}l\'ements de math\'ematique. {A}lg\`ebre commutative.
  {C}hapitres 8 et 9}.
\newblock Springer, Berlin, 2006.
\newblock Reprint of the 1983 original.

\bibitem{BriCon}
O. Brinon and B. Conrad.
\newblock {CMI Summer school notes on p-adic hodge theory}.
\newblock {\em http://math.stanford.edu/~conrad/papers/notes.pdf}.

\bibitem{bui92}
A.~Buium.
\newblock Intersections in jet spaces and a conjecture of {S}.\ {L}ang.
\newblock {\em Annals of Mathematics}, 136(3):557--567, 1992.

\bibitem{buiHbook}
A.~Buium.
\newblock{Differential Algebra and Diophantine Geometry}, 
\newblock{\em Hermann}, Paris, 1994.
\bibitem{bui95}
A.~Buium.
\newblock Differential characters of abelian varieties over $p$-adic fields.
\newblock {\em Inventiones mathematicae}, 122(1):309--340, 1995.

\bibitem{bui00}
A.~Buium.
\newblock Differential modular forms.
\newblock {\em Journal f{\"u}r die Reine und Angewandte Mathematik}, 
520:95--168, 2000.

\bibitem{bui-new}
A.~Buium.
\newblock Differential modular forms attached to newforms mod p.
\newblock {\em Journal of Number Theory}, 155:111--128, 2015.

\bibitem{ega1}
J.~Dieudonn{\'e} and A.~Grothendieck.
\newblock {\'E}l{\'e}ments de g{\'e}om{\'e}trie alg{\'e}brique.
\newblock {\em Institut des Hautes \'{E}tudes Scientifiques. Publications Math\'{e}matiques}, 1971.

\bibitem{fon79}
J.-M. Fontaine.
\newblock Modules Galoisiens, modules filtr{\'e}s et anneaux de Barsotti--Tate.
\newblock {\em Ast{\'e}risque}, 65:3--80, 1979.

\bibitem{hessl05}
L.~Hesselholt.
\newblock The big de {R}ham-{W}itt complex.
\newblock{\em Acta Mathematica}, 214:135--207, 2015.

\bibitem{HoGr}
M. J. Hopkins and B. H. Gross.
\newblock Equivariant vector bundles on the Lubin-Tate moduli space.
\newblock {\em Contemporary Mathematics}, 158, 1994.


\bibitem{joyal}
Andr{\'e} Joyal.
\newblock $\delta$-anneaux et vecteurs de {W}itt.
\newblock {\em La Sociét\'{e} Royale du Canada. L'Academie des Sciences. Comptes Rendus Math\'{e}matiques}, 7(3):177--182, 1985.

\bibitem{Laumon-book-vol1}
G. Laumon, \emph{Cohomology of {D}rinfeld modular varieties. {P}art {I}}.
Cambridge University Press, Cambridge (1996)

\bibitem{Lazard:CFG}
Michel Lazard.
\newblock {\em Commutative formal groups}.
\newblock Springer-Verlag, Berlin, 1975.
\newblock Lecture Notes in Mathematics, Vol. 443.

\bibitem{M1}
Y. Manin.
\newblock
The theory of commutative formal groups over fields of finite characteristics, 
\newblock {\em Russian Math Surveys} 18 (1963), pp. 1–83.


\bibitem{M}
Y. Manin.
\newblock Rational points of algebraic curves over function fields.
\newblock In {\em Selected Papers Of Yu I Manin}, pages 23--68. 1996.

\bibitem{MazMess}
B. Mazur and W. Messing.
\newblock Universal extensions and one dimensional crystalline cohomology,
{\em Lecture Notes in Mathematics} 370, Springer-Verlag, 
Berlin-Heidelberg-New York, 1974.

\bibitem{rap-zink}
M.~Rapoport and T.~Zink.
\newblock {\em Period Spaces for $p$-divisible Groups (AM-141)}, volume 141.
\newblock Princeton University Press, 2016.

\bibitem{arnab-thesis}
A.~Saha.
\newblock {\em Arithmetic Jet Spaces and Modular Forms.} 
\newblock Thesis (Ph.D.)–The University of New Mexico. 2012. 134 pp. ISBN: 978-1267-47989-1

\bibitem{Scholze-icm}
P.~{Scholze}.
\newblock {$p$-adic geometry}.
\newblock {\em ArXiv e-prints:1712.03708}, December 2017.

\bibitem{Serre}
J. P.~ Serre
\newblock{Algebraic groups and class field theory}, 
\newblock Graduate Texts in Mathematics, 117, Springer.
\end{thebibliography}
\end{document}